\documentclass[12pt, a4paper,reqno]{amsart}
\usepackage{amscd,amssymb,amsmath, array,latexsym,amsbsy,color}

\emergencystretch=50pt
\allowdisplaybreaks[2]
\setlength{\textwidth}{16.5cm}
\setlength{\textheight}{53.2pc}
\setlength{\evensidemargin}{.1cm}
\setlength{\oddsidemargin}{.1cm}

\begin{document}


\def\Stab{\textrm{Stab}}
\def\supp{\operatorname{supp}}
\def\tr{\operatorname{tr}}
\def\lt{{\operatorname{{lt}}}}
\def\Ind{\operatorname{Ind}} 
\def\Res{\operatorname{Res}}
\def\Aut{\operatorname{Aut}}
\def\Prim{\operatorname{Prim}}
\def\Rep{\operatorname{Rep}}
\def\Id{\operatorname{Id}}
\def\GFA{\operatorname{FA}}
\def\characters{\operatorname{char}}
\def\SO{\operatorname{SO}}
\def\sign{\operatorname{sign}}
\def\dashind{\operatorname{\!-Ind}}
\def\lsp{\operatorname{span}}
\newcommand{\clsp}{\overline{\lsp}}

\def\H{\mathcal{H}} 
\def\K{\mathcal{K}} 
\def\N{\mathcal{N}} 
\def\C{\mathbb{C}}
\def\T{\mathbb{T}}
\def\Z{\mathbb{Z}}
\def\R{\mathbb{R}}
\def\P{\mathbb{P}}
\def\NN{\mathbb{N}}

\newtheorem{thm}{Theorem}[section]
\newtheorem{cor}[thm]{Corollary}
\newtheorem*{claimA*}{Claim A}
\newtheorem*{claimB*}{Claim B}
\newtheorem*{claimC*}{Claim C}
\newtheorem{claim}[thm]{Claim}
\newtheorem{prop}[thm]{Proposition}
\newtheorem{lemma}[thm]{Lemma}
\theoremstyle{definition}
\newtheorem{defn}[thm]{Definition}
\newtheorem{remark}[thm]{Remark}
\newtheorem{example}[thm]{Example}
\newtheorem{examples}[thm]{Examples}
\numberwithin{equation}{section}
\title
[\boldmath The $C^*$-algebras of compact transformation groups]
{\boldmath The $C^*$-algebras of compact transformation groups}

\author[Archbold]{Robert Archbold}
\address{Institute of Mathematics
\\University of Aberdeen
\\Aberdeen AB24 3UE
\\Scotland
\\United Kingdom
}
\email{r.archbold@abdn.ac.uk}

\author[an Huef]{Astrid an Huef}
\address{Department of Mathematics and Statistics\\
University of Otago\\
Dunedin 9054\\
New Zealand}
\email{astrid@maths.otago.ac.nz}

\keywords{Compact transformation group, proper action, Cartan $G$-space,  spectrum of a $C^*$-algebra,
multiplicity of a representation, crossed-product $C^*$-algebra, continuous-trace $C^*$-algebra, Fell algebra, bounded-trace $C^*$-algebra}
\subjclass[2000]{46L05, 46L55, 54H20}
\date{Submitted 6 January 2014, revised 14 October 2014}

\begin{abstract} We investigate the representation theory  of the crossed-product $C^*$-algebra  associated to a compact group $G$ acting on a locally compact space $X$ when the stability subgroups vary discontinuously.
Our main result applies when $G$ has a principal stability subgroup or $X$ is locally of finite $G$-orbit type. Then the  upper multiplicity of the  representation of the crossed product induced from an irreducible representation $V$ of a stability subgroup is obtained by restricting $V$ to a certain closed subgroup of the stability subgroup and taking the maximum of the multiplicities of the irreducible summands occurring in the restriction of $V$. As a corollary we obtain that when the trivial subgroup is a principal stability subgroup,  the crossed product is a Fell algebra if and only if every stability subgroup is abelian. A second corollary is that the $C^*$-algebra of the  motion group $\R^n\rtimes \SO(n)$ is a Fell algebra. This uses the classical branching theorem for the special orthogonal group $\SO(n)$ with respect to $\SO(n-1)$. Since proper transformation groups are locally induced from the actions of compact groups, we describe how some of our results can be extended to transformation groups that are locally proper.
\end{abstract}

\thanks{This research was supported by grant number 41207 from the London Mathematical Society and by travel grants from the Universities of Aberdeen and Otago. Part of the research was undertaken during the workshop ``The structure and classification of nuclear $C^*$-algebras'' at the International Centre  for Mathematical Sciences, Edinburgh, 2013.}

\maketitle

\section{Introduction} It is well-known that a $C^*$-algebra $A$ has continuous trace if and only if (1) the spectrum  $\hat A$ of $A$  is Hausdorff and (2) every $\pi\in\hat A$ satisfies Fell's condition: there exist a neighbourhood $O$ of $\pi$ in $\hat A$ and a positive element $a$ of $A$ such that $\rho(a)$ is a rank-one projection for all $\rho\in O$ \cite[Propositions~4.5.3 and 4.5.4]{Dix2}.
$C^*$-algebras $A$ for which (2) holds are called Fell algebras.
The Fell algebras are precisely those $C^*$-algebras which are generated by their abelian elements (the $C^*$-algebras  of Type I${}_0$  \cite[\S 6.1]{Ped}). Since the spectrum of a Fell algebra is locally Hausdorff by \cite[Corollary~3.2]{ASo}, one may think of a Fell algebra as having continuous trace locally in the sense that there is a collection of ideals of $A$ with continuous trace  that together generate $A$ \cite[Theorem~3.3]{aHKS}.

Let $G$ be a compact group acting continuously on a locally compact space $X$ and consider the associated crossed-product $C^*$-algebra  $C_0(X)\rtimes G$.  Echterhoff showed in \cite[Corollary~2]{E} that $C_0(X)\rtimes  G$ has continuous trace if and only if its spectrum is Hausdorff.  A comparison with the characterisation of $C^*$-algebras with continuous trace above raises the question of whether $C_0(X)\rtimes  G$ is always a Fell algebra, and indeed this is the case when the compact group $G$ is also abelian \cite[Lemma 5.10]{aH}.  However, \cite[Remark 2.6]{AK} shows that the answer can be negative even for the finite permutation group $G=S_3$ (see also Example~\ref{ex-S3} below).

The purpose of this paper is therefore to investigate the extent to which irreducible representations of $C_0(X)\rtimes G$ may fail to satisfy Fell's condition. There are several guiding principles for this. The first is that the failure of Fell's condition implies that the stability subgroups of $G$ vary in a discontinuous manner \cite[Corollary~2]{E}.
Secondly,  if $x_n\to z$ in $X$ then it follows from the sequential compactness of the space $\Sigma(G)$ of closed subgroups of $G$ that there is a closed subgroup $H$ of $G$ and a subsequence $(x_{n_k})_{k}$ such that $S_{x_{n_k}}\to H$. Moreover, since $G$ acts continuously on the Hausdorff space $X$, $H\subset S_z$.
Thirdly, the extent to which an irreducible representation $\pi$ of a $C^*$-algebra $A$ may fail to satisfy Fell's condition is measured by the upper multiplicity $M_U(\pi)$: $\pi$ satisfies Fell's condition if and only if the upper multiplicity $M_U(\pi)$ takes the value $1$ \cite[Theorem 4.6]{A}. Finally,  proper transformation groups are locally induced  from compact transformation groups \cite{abel}. Thus we begin by investigating the upper multiplicities of the irreducible representations of $C_0(X)\rtimes G$ when $G$ is compact. In \S\ref{sec-Cartan}  we  extend some of our results to the more general setting of  transformation groups that are locally proper.

Let $G$ be a compact group acting on a space $X$. This action is then integrable in the sense of \cite{aH, rieffel2} and so $C_0(X)\rtimes G$ is a $C^*$-algebra with bounded trace by \cite[Proposition 3.5]{aH}. It follows from \cite[Theorem 2.6]{ASS} that the upper multiplicities of the irreducible representations of $C_0(X)\rtimes G$ are all finite and therefore take values in the positive integers.
 Since $G$ is compact the orbits are closed in $X$, and it follows that every irreducible representation of $C_0(X)\rtimes G$ is unitarily equivalent to one that is induced from an irreducible representation $V$ of some stability subgroup $S_z:=\{g\in G: g\cdot z=z\}$ (see \S\ref{subsec-inducedreps}  below). We write $\Ind_{S_z}^G(\pi_z\rtimes V)$ for such a representation.
 Our first main result for compact group actions, Theorem~\ref{thm-new-general}, states that for $z\in X$ and $V\in{\hat S}_z$, there exist a closed subgroup $H$ of $G$ with $H\subset S_z$ and $R\in \hat H$
such that
\[M_U\big(\Ind_{S_z}^G(\pi_z\rtimes V)\big)=\big[ V|_H:R \big]\leq \dim V,\]
where $[V|_H:R]$ is the multiplicity of $R$ in $V|_H$.

Our second main result, Theorem~\ref{thm-pss}, gives more detailed information in the following two situations: when $G$ has a principal stability subgroup or when $X$ is locally of finite $G$-orbit type.
    A closed subgroup $S$ of $G$ is  a \emph{principal stability group}\label{page-situations} if the $G$-invariant subset \[\{x\in X: S_x \mbox{ is conjugate to } S\}\] is dense in $X$.  The space $X$ is said to be \emph{of finite $G$-orbit type} if there exist $n\in \NN$ and closed subgroups $H_1,\ldots,H_n$ of $G$ such that for each $x\in X$, $S_x$ is conjugate to one of the $H_i$. Then $X$ is said to be \emph{locally of finite $G$-orbit type} if every point in $X$ has a $G$-invariant neighbourhood which is of finite $G$-orbit type.

   If $G$ is a compact Lie group and $X$ is a topological manifold, then $X$ is locally of finite $G$-orbit type \cite[Remark after IV.1.2]{Bredon}, and if in addition $X/G$ is connected then there exists a principal stability group \cite{Mont}. It is well-known that if $G$ is compact and $S$ is a principal stability group then, for each $x\in X$, $S_x$ contains a conjugate of $S$ (see, for example, the argument in Claim~B of the proof of Theorem~\ref{thm-pss} below). It follows from \cite[Proposition~1.9]{Bredon} that if a principal stability group exists then it is unique up to conjugacy (cf. \cite[Remark 4.1]{KST}).

Theorem~\ref{thm-pss} says that if $G$ contains a principal stability subgroup or if $X$ is locally of finite $G$-orbit type then, for $z\in X$ and $V\in{\hat S}_z$, there exists a closed subgroup $H$ of $G$ with $H\subset S_z$
such that
\[M_U\big(\Ind_{S_z}^G(\pi_z\rtimes V)\big)=\max_{R\in\hat H}\big[ V|_H:R \big]\leq \dim V.\]
Moreover, $H$ may be chosen to be conjugate to a principal stability subgroup if such exists.

Theorem~\ref{thm-pss} leads to a number of corollaries.  For example, when the trivial subgroup $\{e\}$ is a principal stability subgroup, then $C_0(X)\rtimes G$ is a Fell algebra if and only if every stability subgroup is abelian (Corollary~\ref{new-cor-pss-trivial}). The $C^*$-algebra of the classical motion group $\R^n\rtimes \SO(n)$ is isomorphic to $C_0(\R^n)\rtimes\SO(n)$, and it follows from Theorem~\ref{thm-pss} and the branching theorem for the special orthogonal group $\SO(n)$ with respect to the principal stability subgroup $\SO(n-1)$ that $C_0(\R^n)\rtimes\SO(n)$ is a Fell algebra (Example~\ref{ex-orthogonal}).

In \S\ref{sec-Fellpoints}, we study irreducible representations $\Ind_{S_z}^G(\pi_z\rtimes V)$ of $C_0(X)\rtimes G$ where $z\in X$ and $V$ is the restriction to $S_z$ of a character of the compact group $G$. We show in Corollary~\ref{cor-open-Fell} that the collection of all such irreducible representations of $C_0(X)\rtimes G$ is an open subset of $(C_0(X)\rtimes G)^\wedge$ and that the corresponding ideal of $C_0(X)\rtimes G$ is a Fell algebra.

The Cartan transformation groups of \cite{palais} are locally proper, and proper transformation groups are locally induced from  compact transformation groups  \cite{abel}.  In \S\ref{sec-Cartan} we discuss how this can be used to extend some of our results from  compact to Cartan transformation groups. In particular,  Theorem~\ref{thm-Cartan} extends Theorem~\ref{thm-new-general}.

We have been motivated by a number of ideas in \cite{Bag, E, EH}, although we have not needed to use the slice property of Palais \cite[Definition 1.7]{EH}. We are grateful to the referee for drawing our attention to \cite{abel} and \cite{Neu}, and for suggesting strategies which have led to  Theorems~\ref{thm-new-general} and \ref{thm-Cartan}.

\section{Preliminaries}\label{sec-prelim}
Throughout $X$ is a second-countable, locally compact, Hausdorff space and  $G$ is a second-countable group with a jointly continuous action $(g,x)\mapsto g\cdot x$ of $G$ on $X$. In   \S\S\ref{sec-prelim}-\ref{sec-Fellpoints} the group $G$ is compact, and we summarise this set-up as ``$(G,X)$ is a second-countable transformation group with $G$ compact''.

 Let $C_0(X)$ be the $C^*$-algebra of continuous functions $f:X\to \C$ vanishing at infinity. Then $G$ acts on $C_0(X)$  by left translation via $\lt_s(f)(x)=f(s^{-1}\cdot x)$. The action $\lt$ of $G$ on $C_0(X)$ restricts to an action of any subgroup of $G$ on $C_0(X)$, and we denote these restrictions by $\lt$ as well.

A pair $(\pi,U)$, where $\pi:C_0(X)\to B(\H)$ is a representation on a Hilbert space $\H$ and $U:G\to UB(\H)$ is a representation of $G$ by unitary operators on $\H$, is called covariant for $(C_0(X), G,\lt)$ if
$U_s\pi(f)U_s^*=\pi(\lt_s(f))$ for all $f\in C_0(X)$ and $s\in G$.
The crossed product $C^*$-algebra $C_0(X)\rtimes_\lt G$ (or simply $C_0(X)\rtimes G$) is the $C^*$-algebra which is universal for covariant representation  of $(C_0(X), G,\lt)$, see \cite{tfb^2}.

Since both $G$ and $X$ are second-countable, the group $C^*$-algebra $C^*(G)$ and the $C^*$-algebras $C_0(X)$ and $C_0(X)\rtimes_\lt G$ are all separable.  Hence all of their irreducible representations act on separable Hilbert spaces.

We write $\hat G$ for the (equivalence classes of) irreducible unitary representations of $G$.  We often identify the unitary representations of $G$ and the non-degenerate representations of $C^*(G)$ in the usual way.

By an ideal of a $C^*$-algebra $A$ we  always mean a norm-closed two-sided ideal. Let $\Id A$ be the set of all ideals of $A$. We  assume that $\Id A$ is equipped with the topology $\tau_w$ for which a base is given by the family of sets of the form
   \[\{I\in\Id A: I\not\supset J \mbox{ for all }J\in F\},\]
    where $F$ is a finite set (possibly empty) of ideals of $A$ (see \cite[Section 2]{Aprimals} and the references cited therein). This topology induces Fell's inner hull-kernel topology on the representation space $\Rep A$ of \cite[p.~206]{KT}.

\subsection{Choice of measures}
Throughout we fix $\mu=\mu_G$ to be the unique Haar measure on the compact group $G$ such that $\mu(G)=1$. Let $\Sigma(G)$ be the space of closed subgroup go $G$ equipped with the Fell topology from \cite{Fell-topology}. Thus $\Sigma(G)$ is a compact Polish space (see \cite[Appendix~H]{tfb^2}). In particular, $\Sigma(G)$ is second countable and sequentially compact.
For every $H\in\Sigma(G)$ we choose $\mu_H$ to be the Haar measure on $H$ such that $\mu_H(H)=1$. Then $H\mapsto \int_H f(s)\, d\mu_H(s)$ is continuous for every $f\in C(G)$, by \cite[Lemma~H8]{tfb^2}.
 This is a very special ``continuous choice of Haar measures on $\Sigma$'' since the constant function $1$ on $G$ has compact support. There exists a
unique quasi-invariant measure $\nu_H$ on $G/H$
such that for $f\in C(G)$,
\begin{equation}\label{eq-double-integral}\int_G f(s)\ d\mu(s)=\int_{G/H}\int_H f(st)
\ d\mu_H(t) \ d\nu_H(sH)
\end{equation}
 (see, for example, \cite[Lemma C.2]{tfb}; we can take the $\rho$ functions appearing there to be identically 1 because $G$ is compact).  In particular, $\nu_H(G/H)=1$.

 By our choice of Haar measures on the closed subgroups of $G$,  integration over $G/H$ may be viewed as integration over $G$: if $f$ is a  function on $G/H$ regarded as a function on $G$  which is constant on $H$-orbits, then
 \[
\int_{G/H} f(sH)\ d\nu_H(sH)
=\int_{G/H}\int_H f(sh)\ d\mu_H(t) \ d\nu_H(sH)
=\int_G f(s)\ d\mu(s).
\]

\subsection{Induced representations}\label{subsec-inducedreps}

Let $H$ be a closed subgroup of the compact group $G$, and let   $Z_H=\overline{C(G, C_c(X))}$ be Green's right-Hilbert  \[(C_0(X)\rtimes_\lt G)-(C_0(X)\rtimes_\lt H)\] bimodule obtained from his $((C_0(X)\otimes C_0(G/H))\rtimes_{\lt\otimes\lt} G)$--$(C_0(X)\rtimes_\lt H)$ imprimitivity bimodule (see \cite[Proposition~3]{green} or \cite[Theorem~4.22]{tfb^2}).
Let  $(\pi, V)$ be a covariant representation of $(C_0(X), H, \lt)$. We write $\Ind_H^G(\pi\rtimes V)$ for the representation of $C_0(X)\rtimes_\lt G$ induced from the representation $\pi\rtimes V$ of $C_0(X)\rtimes_\lt H$ via $Z_H$.

We will usually apply Green's induction process in the following situation.
Suppose that $H$ is contained in the stability subgroup $S_x$ for some $x\in X$.
Let $V$ be a representation of $H$ on a Hilbert space $\H_V$. Define $\pi_{x,V}:C_0(X)\to B(\H_V)$ by \[\pi_{x, V}(f)h=f(x)h\quad\text{for $f\in C_0(X)$ and $h\in \H_V$.}\] Then $(\pi_{x, V}, V)$ is a covariant representation of $(C_0(X), H, \lt)$. We will often write $\pi_{x}$ for $\pi_{x,V}$ when we think no confusion will arise.

Next we recall Mackey's induction process. Let  $V\in\hat H$ and let \label{page-inducedHspace}
\[
L_V:=\lbrace\xi\in C(G,\H_V)\colon \xi(sh)=V_{h^{-1}}(\xi(s))
{\text{\ for\ }}h\in H\text{\ and\ }s\in G\rbrace.\]
Then
\[(\xi \mid \xi'):=\int_{G/H}( \xi(s)\mid\xi'(s))_{\H_V}\ d\nu_H(sH)\]
defines an inner product on $L_V$.
We denote by $L_V^2$ the completion of $L_V$ with respect to the norm induced by the inner product.
The representation $\Ind_H^G V$ of $G$ induced from $V$ is the representation on $L^2_V$ defined by $((\Ind_H^G V)_s\xi)(r)=\xi(s^{-1}r)$ for $\xi\in L_V$.

It is proved, for example in \cite[Proposition~5.4]{tfb^2}, that if $(\pi, V)$ is covariant for $(C_0(X), \lt, H)$, then
$\Ind_H^G(\pi\rtimes V)$  is unitarily equivalent to $\tilde\pi\rtimes \Ind_H^G V$ where \[(\tilde\pi(f)\xi)(r)=\pi(\lt_{r^{-1}}(f))(\xi(r))\quad\text{for $f\in C_0(X)$, $\xi\in L_V$ and $r\in G$.}\]
(In particular, if $X=\{*\}$,  then  $C(*)=\C$ and $C(*)\rtimes_\lt G=C^*(G)$ and  $\Ind_H^G(\pi_*\rtimes V)$  is unitarily equivalent to $\Ind_H^G V$.)


 Even though our set-up  uses the compactness of $G$, the following two lemmas also hold for non-compact groups. In particular, both lemmas apply to the locally proper transformation groups discussed in \S\ref{sec-Cartan}.  Lemma~\ref{lem-induction} says that when  the orbits $G\cdot x=\{g\cdot x:g\in G\}$ are all closed in $X$ (which is the case when $G$ is compact or $(G,X)$ is locally proper), then every irreducible representation of $C_0(X)\rtimes_\lt G$ is unitarily equivalent to one that is induced from an irreducible representation of a stability subgroup $S_x$ of $G$. Its proof can be pieced together from, for example, the  results in \cite{W2}.

\begin{lemma}\label{lem-induction}
Let $(G, X)$ be a transformation group such that all orbits are closed.
\begin{enumerate}
\item\label{lem-induction-item1} Let $\pi\rtimes U:C_0(X)\rtimes_\lt G\to B(\H)$ be an irreducible representation.  Then there exist $x\in X$ and $V\in\hat S_x$ such that $\pi\rtimes U\simeq\Ind_{S_x}^G(\pi_{x}\rtimes V)$.

\item\label{lem-induction-item2} Let $x\in X$ and $V,W\in \hat S_x$. Then $\Ind_{S_x}^G(\pi_{x}\rtimes V)\simeq \Ind_{S_x}^G(\pi_{x}\rtimes W)$ if and only if $V\simeq W$.
\end{enumerate}
\end{lemma}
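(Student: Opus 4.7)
The strategy is to apply Green's imprimitivity theorem one orbit at a time. Since every orbit $G\cdot x$ is closed by hypothesis, it is $G$-equivariantly homeomorphic to $G/S_x$, and Green's theorem provides a Morita equivalence between $C_0(G\cdot x)\rtimes_\lt G$ and $C^*(S_x)$ (a specialization of the bimodule $Z_{S_x}$ defined in \S\ref{subsec-inducedreps}). Restriction of functions gives a $G$-equivariant surjection $C_0(X)\rtimes_\lt G\twoheadrightarrow C_0(G\cdot x)\rtimes_\lt G$. The Rieffel correspondence then supplies a bijection $\hat S_x\to (C_0(G\cdot x)\rtimes_\lt G)^\wedge$, and chasing the explicit induction formula $\tilde\pi\rtimes \Ind_H^G V$ recalled just before the lemma identifies this bijection as $V\mapsto [\Ind_{S_x}^G(\pi_x\rtimes V)]$.

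For part (1), I would start with an irreducible representation $\pi\rtimes U$ of $C_0(X)\rtimes_\lt G$ and consider its support: the ideal $\ker(\pi|_{C_0(X)})$ of $C_0(X)$ is of the form $C_0(X\setminus Y)$ for some closed $Y\subset X$, and the covariance relation $U_s\pi(f)U_s^*=\pi(\lt_s f)$ forces $Y$ to be $G$-invariant. Thus $\pi\rtimes U$ descends to an irreducible representation of $C_0(Y)\rtimes_\lt G$ in which $C_0(Y)$ acts faithfully. The key step is to show that $Y$ is a single orbit. In the settings of this paper (compact $G$, or the locally proper setting of \S\ref{sec-Cartan}) the orbit space $X/G$ is Hausdorff; two distinct orbits in $Y$ would then be separated by disjoint $G$-invariant open sets $V_1,V_2\subset Y$, and the corresponding ideals $C_0(V_i)\rtimes_\lt G$ of $C_0(Y)\rtimes_\lt G$ are orthogonal and non-zero — but orthogonal ideals that both act non-trivially contradict irreducibility, because the image of one must annihilate the range of the other. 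Hence $Y=G\cdot x$ for some $x\in X$, and the Morita equivalence above furnishes $V\in\hat S_x$ with $\pi\rtimes U\simeq\Ind_{S_x}^G(\pi_x\rtimes V)$.

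Part (2) is the injectivity side of the same Rieffel bijection: the Morita equivalence makes $V\mapsto[\Ind_{S_x}^G(\pi_x\rtimes V)]$ a bijection $\hat S_x\to (C_0(G\cdot x)\rtimes_\lt G)^\wedge$, so $V\simeq W$ if and only if the corresponding induced representations are unitarily equivalent. The step I expect to be the main obstacle is the orbit-reduction in part (1) — refining the invariant support $Y$ to a single orbit — which depends essentially on Hausdorffness of $X/G$ under the closed-orbit hypothesis. Once that is established, the remainder is bookkeeping with Green's imprimitivity theorem together with the concrete induction formula for $\tilde\pi\rtimes\Ind_{S_x}^G V$; these ingredients can be pieced together from \cite{W2}.
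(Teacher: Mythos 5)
Your overall architecture is the standard one and is, as far as one can tell, what the authors intend when they say the lemma ``can be pieced together from the results in \cite{W2}'' (the paper gives no proof beyond that citation): localise to the support of $\pi|_{C_0(X)}$, show that this support is a single closed orbit, and then use Green's imprimitivity theorem for $G\cdot x\cong G/S_x$ together with the Rieffel correspondence, which yields both the existence statement in (1) and the injectivity statement in (2) simultaneously. The identification $G/S_x\cong G\cdot x$ that you use silently is legitimate here: under the standing second-countability assumptions a locally closed orbit is homeomorphic to $G/S_x$, and no amenability is needed for Green's Morita equivalence between $C_0(G/S_x)\rtimes_\lt G$ and $C^*(S_x)$.

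The one step that does not hold up as written is the orbit-reduction in part (1). You separate two orbits inside the support $Y$ by disjoint $G$-invariant open sets, which requires $Y/G$ to be Hausdorff. That is not a consequence of the stated hypothesis that all orbits are closed (closed orbits only make the orbit space $T_1$), and it genuinely fails for some Cartan transformation groups to which the paper explicitly wants this lemma to apply: Palais's example of a Cartan, non-proper action cited in \S\ref{sec-Cartan} has closed orbits but two orbits that cannot be separated by invariant open sets. The standard repair reuses your own key observation in its contrapositive form. Since $\ker(\pi\rtimes U)$ is prime and $\pi\rtimes U$ is nonzero on $I\rtimes_\lt G$ whenever $\pi$ is nonzero on a $G$-invariant ideal $I$ of $C_0(Y)$, no two nonempty relatively open $G$-invariant subsets of $Y$ can be disjoint; equivalently, every nonempty open invariant subset of $Y$ is dense in $Y$. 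Taking a countable base $\{U_n\}$ of $Y$ (second countability) and intersecting the dense open invariant sets $G\cdot U_n$, the Baire category theorem for the locally compact Hausdorff space $Y$ produces a point whose orbit is dense in $Y$; since orbits are closed, $Y$ is that single orbit. With this substitution your argument goes through in the stated generality, and part (2) is, as you say, the injectivity of the Rieffel correspondence for the orbit's imprimitivity bimodule.
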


\begin{lemma}\label{lemma-D} Let $(G,X)$ be a transformation group.
Let $x\in X$, $g\in G$ and $V\in \hat S_x$. Write $g\cdot V$ for the representation of $S_{g\cdot x}$ defined by $(g\cdot V)(t)=V(g^{-1}tg)$.
Then
\[\Ind_{S_x}^G(\pi_{x}\rtimes V)\simeq \Ind_{S_{g\cdot x}}^G(\pi_{g\cdot x}\rtimes (g\cdot V)).\]
\end{lemma}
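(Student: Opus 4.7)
The plan is to produce an explicit unitary intertwiner between the Hilbert spaces of the two induced representations in the Mackey realisation recalled in \S\ref{subsec-inducedreps}. First I would note that $S_{g\cdot x}=gS_xg^{-1}$, so $k\mapsto g^{-1}kg$ is a continuous isomorphism $S_{g\cdot x}\to S_x$ and $g\cdot V$ is a bona fide unitary representation of $S_{g\cdot x}$ on $\H_V$. The natural candidate intertwiner is $W\colon L^2_V\to L^2_{g\cdot V}$ defined on the dense subspace $L_V$ by right translation by $g$,
\[
(W\xi)(r)=\xi(rg),
\]
with formal inverse $(W^{-1}\eta)(s)=\eta(sg^{-1})$.

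The verification then breaks into four routine steps, which I would carry out in turn. First, $W$ sends $L_V$ into $L_{g\cdot V}$: if $k\in S_{g\cdot x}$ and $h:=g^{-1}kg\in S_x$, then $kg=gh$, so $(W\xi)(rk)=\xi(rgh)=V_{h^{-1}}\xi(rg)=(g\cdot V)_{k^{-1}}(W\xi)(r)$, and the symmetric computation for $W^{-1}$ gives a bijection $L_V\to L_{g\cdot V}$. Second, $W$ is isometric: the map $\Phi\colon rS_{g\cdot x}\mapsto rgS_x$ is a well-defined, continuous, $G$-equivariant bijection $G/S_{g\cdot x}\to G/S_x$, so $\Phi_*\nu_{S_{g\cdot x}}$ is a left-$G$-invariant probability measure on the compact coset space $G/S_x$ and must equal $\nu_{S_x}$ by uniqueness; the change of variable then gives $\|W\xi\|=\|\xi\|$. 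Third, for the group parts, both $W(\Ind_{S_x}^G V)_s\xi$ and $(\Ind_{S_{g\cdot x}}^G(g\cdot V))_sW\xi$ send $r$ to $\xi(s^{-1}rg)$, since right and left translation on $G$ commute. Fourth, for the $C_0(X)$-parts, the identity $(rg)\cdot x=r\cdot(g\cdot x)$ combined with the formula $\widetilde{\pi_{x,V}}(f)\xi(r)=f(r\cdot x)\xi(r)$ shows that $W\widetilde{\pi_{x,V}}(f)\xi$ and $\widetilde{\pi_{g\cdot x,\,g\cdot V}}(f)W\xi$ both send $r$ to $f(r\cdot(g\cdot x))\xi(rg)$.

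The only delicate point is the bookkeeping in the first two steps, namely keeping the conjugation $k\leftrightarrow h$ on the correct side and identifying the pushforward measure $\Phi_*\nu_{S_{g\cdot x}}$ with $\nu_{S_x}$; the latter is immediate from uniqueness of the normalised $G$-invariant probability measure on the compact homogeneous space $G/S_x$. The intertwining relations in the third and fourth steps are essentially automatic given the commutation of left and right translations and the action axiom. For the non-compact extension foreshadowed in \S\ref{sec-Cartan} one would replace the invariant probabilities by quasi-invariant measures with appropriate $\rho$-functions, but the same $W$ remains the natural candidate.
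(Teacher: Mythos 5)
Your proof is correct. The paper itself gives no argument, deferring to Williams, \emph{Crossed Products of $C^*$-Algebras}, Lemma~5.8, which treats the general statement that conjugating the inducing data $(H,\pi\rtimes V)$ by $g\in G$ does not change the Green-induced representation; your explicit intertwiner $W\xi(r)=\xi(rg)$ in the Mackey realisation is precisely the map underlying that citation, so in substance the two routes coincide, with yours having the merit of being self-contained. All four of your verification steps check out: $S_{g\cdot x}=gS_xg^{-1}$, the covariance computation $(W\xi)(rk)=\xi(rg(g^{-1}kg))=(g\cdot V)_{k^{-1}}(W\xi)(r)$ is the right bookkeeping, the intertwining of the group and $C_0(X)$ parts is immediate from $(rg)\cdot x=r\cdot(g\cdot x)$, and the identification $\Phi_*\nu_{S_{g\cdot x}}=\nu_{S_x}$ via uniqueness of the normalised invariant measure on $G/S_x$ is valid for compact $G$ (and, as you note, survives for the locally proper setting of \S\ref{sec-Cartan} because the stabilisers there are compact, so the $\rho$-functions can again be taken identically $1$ and only the normalisation of $\nu_{S_x}$ needs minor care). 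One cosmetic point: you should say explicitly that the isometry $W$, defined on the dense subspace $L_V$ with dense range $L_{g\cdot V}$, extends to a unitary $L^2_V\to L^2_{g\cdot V}$, and that the equivalence of $\tilde\pi_x\rtimes\Ind_{S_x}^GV$ with the Green-induced $\Ind_{S_x}^G(\pi_x\rtimes V)$ is the one quoted in \S\ref{subsec-inducedreps}; neither is a gap, just a sentence each.
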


\begin{proof}
See \cite[Lemma~5.8]{tfb^2} for a proof in greater generality.
\end{proof}

\subsection{Fell's topology for subgroup representations}
Following \cite[\S2]{FellII}, let
\[
Y=\{(H, s):H\in\Sigma(G), s\in H\}.
\]
Then $Y$ is a closed subset of the compact, Hausdorff space $\Sigma(G)\times G$. Thus $Y$ is
compact and Hausdorff. Since $G$ and $\Sigma(G)$ are second-countable, so are $\Sigma(G)\times G$ and its subspace $Y$.  Let $f,g\in C_c(Y)$. Equipped with the operations defined by
\begin{align*}
f*g(H,s)&=\int_H f(H, s) g(H, t^{-1}s)\, d\mu_H(t)\\
f^*(H,s)&=\overline{f(H, s^{-1})},
\end{align*}
$C_c(Y)$ becomes a $*$-algebra. The formula
$\|f\|_{C_c(Y)}=\sup_{H\in \Sigma(G)}\int_H |f(H, s)|\, d\mu_H(s)$ defines a norm on $C_c(Y)$.  A unitary representation $V$ of a closed subgroup $H$ in $G$ on a Hilbert space $\H_V$ can be lifted to a $*$-representation $W^{H,V}$ of $C_c(Y)$ via $W^{H,V}(f)=V(f(H, \cdot))$; notice that $W^{H,V}$ is norm decreasing.  The universal norm on $C_c(Y)$ is
\[\|f\|_u=\sup\{\|\pi(f)\|:\pi\text{\ is a nondegenerate norm-decreasing representation of $C_c(Y)$}\},\]
and Fell's \emph{subgroup $C^*$-algebra} $C^*_S(G)$ is the completion of $C_c(Y)$ in this norm.  Notice that $C^*_S(G)$ is separable because $Y$ is second countable.

Define $S(G)^\wedge=\{(H,V):H\in\Sigma(H), V\in\hat H\}$. The assignment $(H,V) \to W^{H,V}$ induces a bijection from $S(G)^\wedge$ onto $C^*_S(G)^\wedge$ \cite[Lemma~2.8]{FellII}. The pull-back to $S(G)^\wedge$ of the topology on $C^*_S(G)^\wedge$ is called \emph{Fell's topology for subgroup representations} on $S(G)^\wedge$.  Since $C^*_S(G)$ is separable, $C^*_S(G)^\wedge$ is second countable and hence so is Fell's topology on $S(G)^\wedge$.

\subsection{A continuous sum of Hilbert spaces}\label{subsec-cts-field}
Let $H$ be a closed subgroup of the compact group $G$ and $V\in \hat H$.  We now summarise the construction of the  continuous field of Hilbert spaces over $G/H$  and the associated Hilbert space given in  \cite[\S3]{aH}.
The construction simplifies since $G$ is compact.

Define a relation on $G\times \H_V$ by
$(s,v)\simeq (u,w)$   if   and  only   if there exists
$t\in H$ such that $u=st$ and $w=V_{t^{-1}}v$. Then $\simeq$ is an equivalence relation.
Define \[q:G\times\H_V/\!\!\simeq\,\,\to G/H\] by  $q[(s,v)]=sH$.
Each fibre $\H_{sH}:=q^{-1}(sH)$  is a Hilbert space, and
$q_s\colon\H_{sH}\to\H_V$ given by $q_s([s,v])=v$ is a unitary operator.

Let $L_V^2$ be the Hilbert space described in \S\ref{subsec-inducedreps}.
 For  $\xi\in L^2_V$,  define the vector field $\sigma(\xi)$ by
$\sigma(\xi)(sH)=[s,\xi(s)]\in\H_{sH}$.
To form a continuous field $(\lbrace\H_{sH}\rbrace_{sH\in G/H},\Lambda)$ of Hilbert spaces we take the fundamental family  to be
$\Lambda:=\lbrace\sigma(\xi)\colon\xi\in L^2_V\text{\ is continuous} \rbrace$.
(Then $\Lambda$ is complete in the  sense required in \cite{duflo}.)

For $g:G/H\to [0,\infty)$ define  $\int_{G/H}^*g\, d\nu_H=\inf\left\{{\int_{G/H} f\, d\nu_H}\right\}$ where the infimum is taken over measurable functions $f$ dominating $g$ almost everywhere, and  then extend the definition of  $\int^*$  to complex-valued functions by linearity.
For any vector field $\phi$ over $G/H$ set
\begin{align*}
N_2(\phi)^2&=\int^*_{G/H} \|\phi(sH)\|^2_{\H_{sH}} \ d\nu_H(sH).
\end{align*}
Then $\phi$  is \emph{square-summable} if
$N_2(\phi)<\infty$ and $\phi$ is the $N_2$-limit of vector fields in $\Lambda$.
Let $\mathcal{L}^2(\Lambda)$ be the space of all the square-summable vector fields.
A vector field $\phi$ is  \emph{measurable} if it satisfies
Lusin's property: for all compact subsets $K$ of $G/H$ and
$\epsilon>0$, there exists a compact subset $K'$ such that
$\phi_{|K'}\in\Lambda$   and $\mu(K\setminus K')<\epsilon$.
Thus   a vector field $\phi$ is an element of $\mathcal{L}^2(\Lambda)$ if and
only if $N_2(\phi)<\infty$ and $\phi$ is measurable.

Let $N=\lbrace \phi\in\Pi_{sH\in G/H}\H_{sH}\colon \phi=0\ \nu_H\text{-a.e.}\rbrace$.
Then  $L^2(\Lambda):=\mathcal{L}^2(\Lambda)/N$ is a Hilbert space, called the continuous sum over $G/H$,
with inner product
\begin{equation*}( \phi_1\mid \phi_2)_{L^2(\Lambda)}
=\int_{G/H}^*(
\phi_1(sH)\mid \phi_2(sH))_{\H_{sH}}\ d\nu_H(sH)\quad\quad
(\phi_1,\phi_2\in\mathcal{L}^2(\Lambda)).
\end{equation*}
We will write $\phi$  for the class of $\phi$ in $L^2(\Lambda)$.
It is easily seen that the map $\sigma\colon L^2_V\to L^2(\Lambda)$
is a unitary operator.

\section{Trace formulae}\label{sec-trace}

In this section we establish a number of trace formulae for later use.

\begin{prop}\label{prop-long-calculation}
Let $(G,X)$ be a second-countable transformation group with $G$ compact.
Let $x\in X$, $H$ a closed subgroup of $G$ with $H\subset S_x$ and  $V\in\hat H$. Let $a\in C(G, C_c(X))^+\subset C_0(X)\rtimes_\lt G$. Then the operator $\Ind_H^G(\pi_{x}\rtimes V)(a) $ is trace class and
\begin{equation}\label{eq-calculate-trace2}
\tr\big( \Ind_H^G(\pi_{x}\rtimes V)(a) \big)=
\int_{G/H} \int_H   a(rt^{-1}r^{-1})( r\cdot x)\tr(V_t^*)\, d\mu_H(t)\, d\nu_H(rH).
\end{equation}
\end{prop}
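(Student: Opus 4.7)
My plan is to realise $\pi:=\Ind_H^G(\pi_{x}\rtimes V)$ as an integral operator with a continuous, operator-valued kernel on a continuous field of finite-dimensional Hilbert spaces over $G/H$, and then to apply the standard diagonal trace formula. Using the equivalence $\pi\simeq \tilde\pi_x\rtimes \Ind_H^G V$ on $L^2_V$ recalled in \S\ref{subsec-inducedreps}, for $a\in C(G, C_c(X))$ I would first compute
\[
(\pi(a)\xi)(r)=\int_G a(s)(r\cdot x)\,\xi(s^{-1}r)\,d\mu(s).
\]
Substituting $u=s^{-1}r$, decomposing $d\mu(u)=d\mu_H(h)\,d\nu_H(r'H)$ via \eqref{eq-double-integral} (with $u=r'h$), and invoking the covariance $\xi(r'h)=V_{h^{-1}}\xi(r')$, I would rewrite this as $(\pi(a)\xi)(r)=\int_{G/H}K(r,r')\xi(r')\,d\nu_H(r'H)$, where
\[
K(r,r')=\int_H a(rh^{-1}r'^{-1})(r\cdot x)\,V_{h^{-1}}\,d\mu_H(h)\in B(\H_V).
\]

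Two key properties of this kernel are then: (i) $K$ is jointly continuous in $(r,r')$, from joint continuity of $a$ and finite-dimensionality of $\H_V$; and (ii) the identity $K(rh_0, r'h_1)=V_{h_0}^*K(r,r')V_{h_1}$ for $h_0,h_1\in H$ (using $H\subset S_x$ so that $rh_0\cdot x=r\cdot x$) lets $K$ descend to a well-defined jointly continuous kernel $\tilde K(rH,r'H):=q_r^{-1}K(r,r')q_{r'}$ on the continuous sum $L^2(\Lambda)$ of \S\ref{subsec-cts-field}; through the isometry $\sigma$, $\pi(a)$ becomes the corresponding integral operator on $L^2(\Lambda)$. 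Since $H$ is a closed subgroup of the compact group $G$, $H$ itself is compact, so $V\in\hat H$ is finite-dimensional. Consequently each $\tilde K(rH,rH)$ is automatically finite-rank on $\H_{rH}$, its fibrewise trace satisfies $\tr\tilde K(rH,rH)=\tr K(r,r)$ (independence of coset representative follows from (ii) and cyclicity of $\tr$), and $rH\mapsto\tr K(r,r)$ is continuous on the compact base $G/H$.

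Next, the positivity of $a$ gives $\pi(a)\geq 0$, and the bounded-trace property of $C_0(X)\rtimes G$ from \cite[Proposition~3.5]{aH}, applied to the Pedersen-ideal element $a\in C(G, C_c(X))$, guarantees that $\pi(a)$ is trace class. Mercer's theorem in the form appropriate for positive integral operators with continuous operator-valued kernel on a compact base then yields
\[
\tr\pi(a)=\int_{G/H}\tr K(r,r)\,d\nu_H(rH).
\]
Expanding $\tr K(r,r)=\int_H a(rh^{-1}r^{-1})(r\cdot x)\tr V_{h^{-1}}\,d\mu_H(h)$, renaming the dummy variable $h$ to $t$, and using unitarity $V_{t^{-1}}=V_t^*$ produces the claimed identity \eqref{eq-calculate-trace2}.

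The main obstacle is justifying that $\pi(a)$ is trace class and that the diagonal-kernel trace formula applies in this continuous-sum context; the rest is bookkeeping with the two Haar measures and the induction picture. The compactness of $G$ (hence of $H$ and of $G/H$) together with the finite-dimensionality of $\H_V$ conspire to make $\tilde K$ fibrewise finite-rank with jointly continuous trace, so once the trace-class claim is in hand the classical kernel-diagonal formula applies without further technical fuss.
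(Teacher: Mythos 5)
Your proposal has the same architecture as the paper's proof: the same kernel $K(r,u)=\int_H a(rt^{-1}u^{-1})(r\cdot x)V_{t^{-1}}\,d\mu_H(t)$, the same covariance identity used to descend to a well-defined continuous kernel $\tilde K(rH,uH)=q_r^{-1}K(r,u)q_u$ on the continuous sum $L^2(\Lambda)$, and the same appeal to a vector-valued Mercer-type diagonal trace formula. There are two genuine points of divergence. First, you source the trace-class property of $\Ind_H^G(\pi_x\rtimes V)(a)$ from the bounded-trace property of $C_0(X)\rtimes G$ (\cite[Proposition~3.5]{aH}) together with membership of $a$ in the Pedersen ideal; the paper instead extracts it from the third part of Th\'eor\`eme~3.3.1 of \cite{duflo} after checking that $\int^*_{G/H}\tr\tilde K(rH,rH)\,d\nu_H<\infty$ (immediate from $\dim\H_V<\infty$ and compactness of $H$ and $G/H$). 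Your route is legitimate but imports two auxiliary facts that you do not justify --- that $C(G,C_c(X))^+$ sits inside the Pedersen ideal of the crossed product, and that positive Pedersen-ideal elements of a bounded-trace $C^*$-algebra are bounded-trace elements --- whereas the paper's route is self-contained and delivers trace-class-ness and the trace formula in one stroke. Second, the step you wave through as applying ``without further technical fuss'' is precisely where the paper does its remaining work: the version of Mercer's theorem being invoked (\cite[Th\'eor\`eme~3.3.1, Proposition~3.1.1]{duflo}) requires $\tilde K$ to be a \emph{measurable vector field} over $G/H\times G/H$ for the field of Hilbert--Schmidt operators with fundamental family $\Lambda\otimes\Lambda$, and the paper establishes this via the countability of $\Lambda$ (Remarque~3.2.1 of \cite{duflo}); continuity of the scalar functions $(rH,uH)\mapsto(\tilde K(rH,uH)\phi_1(uH)\mid\phi_2(rH))$ does not by itself place $\tilde K$ inside the framework of the cited theorem. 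This is a verifiable hypothesis rather than an obstruction, but it needs to be checked rather than asserted, since the diagonal trace formula for integral operators is exactly the kind of statement that fails without such regularity.
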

\begin{proof}
We will use a vector-valued version of Mercer's Theorem to compute the trace in \eqref{eq-calculate-trace2}. This theorem is obtained by combining  Remarque~3.2.1, Th\'eor\`eme~3.3.1 and  Proposition~3.1.1 of \cite{duflo}; there is quite a lot of work to be done to  check that these results apply.

Let $(\lbrace\H_{sH}\rbrace_{sH\in G/H},\Lambda)$ be the continuous field of Hilbert spaces over $G/H$  and $L^2(\Lambda)$ the continuous sum described in \S\ref{subsec-cts-field}.
We start by realising $\Ind_H^G(\pi_{x}\rtimes V)(a)$ as a kernel operator on $L^2(\Lambda)$.
First, $\Ind_H^G(\pi_{x}\rtimes V)$ is unitarily equivalent to $\tilde\pi_{x}\rtimes\Ind_H^G V$ on $L_V^2$ where $(\tilde\pi_{x}(f)\xi)(r)=f(r\cdot x)\xi(r)$ and $((\Ind_H^G V)_s\xi)(r)=\xi(s^{-1}r)$.  We compute, for $\xi\in L_V\subset L_V^2$ and $r\in G$:
\begin{align*}
\big((\tilde\pi_{x}\rtimes \Ind_H^G V)(a)\xi \big)(r)&=\Big(\int_G \tilde\pi_{x}(a(s))(\Ind_H^G V)_s\xi\, d\mu(s)\Big)(r)\\
&=\int_G a(s)( r\cdot x)\xi(s^{-1}r)\, d\mu(s)\\
&=\int_G a(ru^{-1})(r\cdot x)\xi(u)\, d\mu(u)\\
\intertext{which, using \ref{eq-double-integral}, becomes}
&=\int_{G/H}\int_H a(rt^{-1}u^{-1})( r\cdot x)\xi(ut)\, d\mu_H(t)\, d\nu_H(uH)\\
&=\int_{G/H}\Big(\int_H a(rt^{-1}u^{-1})( r\cdot x)V_t^{-1}\, d\mu_H(t)\Big)\xi(u) \, d\nu_H(uH),\\
&=  \int_{G/H}K_x(r, u)\xi(u) \, d\nu_H(uH)
\end{align*}
where \[K_x(r, u)=\int_H a(rt^{-1}u^{-1})( r\cdot x)V_t^{-1}\, d\mu_H(t)\in B(\H_V).\]  We note here, for future use, that the calculation above implicitly shows that for fixed $r,u\in G$, the function $t\mapsto  a(rt^{-1}u^{-1})( r\cdot x)V_t^{-1}$ is a $\mu_H$-measurable $B(L_V^2)$-valued function.

The maps $\sigma\colon L^2_V\to L^2(\Lambda)$ and $q_r\colon\H_{rH}\to\H_V$ defined by $\sigma(\xi)(rH)=[r,\xi(r)]\in\H_{rH}$ and $q_r([r,v])=v$, respectively, are unitary operators. Let \begin{equation*}\tilde K_x(rH, uH)=q_r^{-1}K_x(r,u)q_u.\end{equation*}
We claim that $\sigma (\tilde\pi_{x}\rtimes \Ind_H^G V)(a)\sigma^*$ is the operator in $B(L^2(\Lambda))$ determined by the kernel
$\tilde K_x$ in the sense of \cite[\S3.1]{duflo}. Thus we need to verify that
\begin{enumerate}
\item\label{kernel-enumerate1} $\tilde K_x$ is a well-defined function on $G/H\times G/H$;
\item\label{kernel-enumerate2}
for $\phi_1, \phi_2\in\Lambda$,
the function \begin{equation}\label{eq-kernel}(rH, uH)\mapsto ( \tilde K_x(rH,uH)(\phi_1(uH))\mid \phi_2(rH))_{\H_{rH}}\end{equation} is $\nu_H\times\nu_H$-integrable and
\begin{align*}&( (\sigma (\tilde\pi_{x}\rtimes U_V)(a)\sigma^*)\phi_1\mid \phi_2)_{L^2(\Lambda)}\\
&=\int_{G/H}\int_{G/H}\big(
\tilde K_x(rH,uH)(\phi_1(uH))|\phi_2(rH)\big)_{\H_{rH}} \ d\nu_H(uH)\ d\nu_H(rH).\end{align*}
\end{enumerate}

To see that $\tilde K_x$ is well-defined, we observe that
$q_{uh}=V_h^{-1}q_u$ and $K_x(rh,uk)=V_h^{-1}K_x(r,u)V_k$ for $h,k\in H$ and $r,u\in G$.
Thus
\[
\tilde K_x(rhH, ukH)=q_{rh}^{-1} K_x(rh, uk)q_{uk}=q_r^{-1}V_hV_h^{-1}K_x(r,u)V_kV_k^{-1}q_u=\tilde K_x(rH, uH),
\]
giving~\eqref{kernel-enumerate1}.
For~\eqref{kernel-enumerate2}, we begin by showing that $\tilde K_x$ is continuous in the sense that the function at \eqref{eq-kernel} is continuous.
Write $\phi_i=\sigma(\xi_i)$ where $\xi_i\in L^2_V$ is continuous for $i=1,2$.  Then $q_u(\sigma(\xi_i)(uH))=q_u([(u, \xi_i(u))])=\xi_i(u)$ for $u\in G$.  Thus
\begin{align*}
( \tilde K_x(rH,uH)(\phi_1(uH))\mid \phi_2(rH))_{\H_{rH}}
&=( K_x(r,u)q_u(\phi_1(uH))\mid q_r(\phi_2(rH)))_{\H_V}\\
&=(K_x(r,u)(\xi_1(u))\mid\xi_2(r))_{\H_V}.
\end{align*}
Since the canonical map $G\to G/H$ is open and the $\xi_i$ and the inner product are continuous, it suffices to show that $K_x$ is a continuous function on $G\times G$. So suppose that $r_n\to r$ and $u_n\to u$ in $G$.
Set $F(t)=a(rt^{-1}u^{-1})( r\cdot x)V_t^{-1}$ and $F_n(t)=a(r_nt^{-1}u_n^{-1})( r_n\cdot x)V_t^{-1}$. As observed above, $F$ and $F_n$ are $\mu_H$-measurable. Since $a$, regarded as a function on $G\times X$,  is jointly continuous, $F_n\to F$ pointwise. Since $H$ is compact, $t\mapsto \|F_n(t)\|$ is dominated by the function $t\mapsto \|a\|_\infty$ in $L^1(H)$, where $\|a\|_\infty=\max_{s\in G}\|a(s)\|_\infty$.  The vector-valued dominated convergence theorem (see, for example, \cite[Proposition~B.32]{tfb^2}) now implies that $K_x(r_n,u_n)=\int_H F_n(t)\, d\mu_H(t)\to \int_H F(t)\, d\mu_H(t)=K_x(r,u)$.  Thus $K_x$ is continuous. Hence the function at \eqref{eq-kernel} is continuous and therefore $\nu_H\times\nu_H$-integrable since $G/H$ is compact. The displayed equation in~\eqref{kernel-enumerate2} follows from the definition of the inner product in $L^2_V$ together with the earlier calculation that
\[\big((\tilde\pi_{x}\rtimes \Ind_H^G V)(a)\xi_1 \big)(r)=\int_{G/H}K_x(r, u)\xi_1(u) \, d\nu_H(uH).\]

Now we need to argue that the results of \cite{duflo} apply. Since $\tilde K_x$ is a continuous kernel defining a positive operator, $\tilde K_x(rH, rH)$ is positive for all $rH\in G/H$ by the first part of \cite[Th\'eor\`eme~3.3.1]{duflo}.
Next we compute $\int_{G/H}^*\tr(\tilde K_x(rH, rH))\, d\nu_H(rH)$.  For this, let $\{\eta_i\}_{i=1}^k$ be an orthonormal basis for $\H_V$. Then
\[rH\mapsto \tr(\tilde K_x(rH,rH))=\tr(K_x(r,r))=\sum_{i=1}^k(K_x(r,r)\eta_i\mid\eta_i)_{\H_V}
\]
is a continuous function on $G/H$ and hence is $\nu_H$-measurable. Thus
\begin{align}
\int_{G/H}^*\tr(\tilde K_x&(rH, rH))\, d\nu_H(rH)=\int_{G/H}\tr(\tilde K_x(rH, rH))\, d\nu_H(rH)\notag\\
&=\int_{G/H}\sum_{i=1}^k(K_x(r,r)\eta_i\mid\eta_i)_{\H_V}\, d\nu_H(rH)\notag\\
&=\int_{G/H} \int_H \sum_{i=1}^k a(rt^{-1}r^{-1})( r\cdot x)(V_t^{-1}\eta_i\mid \eta_i)_{\H_V}\, d\mu_H(t)\, d\nu_H(rH)\label{eq-calculate-trace} \\
&\leq k\|a\|_\infty\mu_H(H)\nu_H(G/H)<\infty.\notag
\end{align}

We write $\H_{rH,uH}$ for the operators from $\H_{uH}$ to $\H_{rH}$, which are all Hilbert-Schmidt because each $\H_{rH}$ is finite dimensional. Then $(\lbrace\H_{rH,uH},\Sigma:=\Lambda\otimes\Lambda)$ is a continuous field of Hilbert spaces.
(Here $\Sigma$ is the fundamental family generated by $(rH, uH)\mapsto \phi(rH)\otimes \overline{\psi(uH)}$, where $\phi,\psi\in\Lambda$ and  $\phi(rH)\otimes \overline{\psi(uH)}$ is the rank-one operator $v\mapsto (v\mid \psi(uH))_{\H_{uH}}\phi(rH)$ for $v\in \H_{uH}$.)  Since $G$ is second-countable and $\H_V$ is  finite-dimensional, hence separable, the fundamental family $\Lambda$ is countable in the sense that there is a sequence $\{\phi_n\}\subset \Lambda$ such that $\{\phi_n(rH)\}$ is dense in $\H_{rH}$ for all $rH\in G/H$.  Remarque~3.2.1 in \cite{duflo} says, first, that $\Sigma$ is countable as well, and second, that $\tilde K_x$ is measurable as a vector field on $G/H\times G/H$.  Now the third part of \cite[Th\'eor\`eme~3.3.1]{duflo} applies and says that the operator $\sigma (\tilde\pi_{x}\rtimes \Ind_H^G V))(a)\sigma^*$ defined by $\tilde K_x$ is trace class. Finally,  by \cite[Proposition~3.1.1]{duflo}, $\tilde K_x(rH, rH)$ is trace class for almost all $rH\in G/H$, and
\[
\tr(\sigma (\tilde\pi_{x}\rtimes \Ind_H^G V))(a)\sigma^*)=\int_{G/H}\tr(\tilde K_x(rH, rH))\, d\nu_H(rH),
\]
which we calculated at \eqref{eq-calculate-trace} above.  Finally, we note that
\[
\eqref{eq-calculate-trace} =
\int_{G/H} \int_H   a(rt^{-1}r^{-1})( r\cdot x)\tr(V_t^*)\, d\mu_H(t)\, d\nu_H(rH)\qedhere
\]
\end{proof}

We thank Iain Raeburn for pointing us to the following lemma.

\begin{lemma}\label{lem-iain}  Let $(G,X)$ be a second-countable transformation group.  Let $z\in X$, $H$ be a closed subgroup of $G$ with $H\subset S_z$ and $k\in\NN\cup\{\infty\}$.   Suppose $W=\oplus_{i=1}^k W_i:H\to U(\H_W)$ is a unitary representation of $H$.  Then
\[
\Ind_H^G(\pi_{z,W}\rtimes W)\simeq\bigoplus_i \Ind_H^G(\pi_{z,W_i}\rtimes W_i).
\]
\end{lemma}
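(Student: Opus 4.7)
The plan is to use the concrete realization given in Section~\ref{subsec-inducedreps}, namely
\[
\Ind_H^G(\pi_{z,W}\rtimes W)\simeq \tilde\pi_{z,W}\rtimes \Ind_H^G W
\]
on the Hilbert space $L^2_W$, and to show that the decomposition $\H_W=\bigoplus_i \H_{W_i}$ induces an orthogonal decomposition $L^2_W\simeq\bigoplus_i L^2_{W_i}$ which intertwines the actions. Since the $W_i$-invariant subspaces $\H_{W_i}$ are mutually orthogonal, the orthogonal projections $P_i:\H_W\to \H_{W_i}$ commute with every $W_h$. Given $\xi\in L_W$, I set $\xi_i(s):=P_i(\xi(s))$; continuity of $P_i$ gives that $\xi_i\in C(G,\H_{W_i})$, and the covariance relation $\xi(sh)=W_{h^{-1}}(\xi(s))$ restricts to $\xi_i(sh)=(W_i)_{h^{-1}}(\xi_i(s))$ because $P_i$ commutes with each $W_h$. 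Hence $\xi_i\in L_{W_i}$.

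Next I would show that the assignment $\xi\mapsto(\xi_i)_i$ extends to a unitary isomorphism $U:L^2_W\to \bigoplus_i L^2_{W_i}$. For $\xi\in L_W$, orthogonality of the $\H_{W_i}$ gives $\|\xi(s)\|^2_{\H_W}=\sum_i\|\xi_i(s)\|^2_{\H_{W_i}}$ pointwise, so by monotone convergence (Tonelli),
\[
\|\xi\|^2_{L^2_W}=\int_{G/H}\|\xi(s)\|^2\,d\nu_H(sH)=\sum_i\int_{G/H}\|\xi_i(s)\|^2\,d\nu_H(sH)=\sum_i\|\xi_i\|^2_{L^2_{W_i}}.
\]
Thus $\xi\mapsto(\xi_i)_i$ is an isometry from the dense subspace $L_W\subset L^2_W$ into $\bigoplus_i L^2_{W_i}$, and extends to an isometry $U$. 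Surjectivity (hence unitarity) follows by checking that any finite tuple $(\xi_i)_i$ with $\xi_i\in L_{W_i}$ lies in the image: the function $s\mapsto\sum_i\xi_i(s)$ is continuous, satisfies the $W$-covariance condition, and therefore lies in $L_W$ with $U$-image equal to the given tuple. Such tuples are dense in $\bigoplus_i L^2_{W_i}$.

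Finally, it remains to check that $U$ intertwines the two representations. Both the operators $(\tilde\pi_{z,W}(f)\xi)(r)=f(r\cdot z)\xi(r)$ and $((\Ind_H^G W)_s\xi)(r)=\xi(s^{-1}r)$ act diagonally with respect to the decomposition $\H_W=\bigoplus_i\H_{W_i}$, since the first is scalar multiplication fibrewise and the second only shifts the argument. Explicitly, $P_i((\tilde\pi_{z,W}(f)\xi)(r))=f(r\cdot z)\xi_i(r)=(\tilde\pi_{z,W_i}(f)\xi_i)(r)$ and $P_i(\xi(s^{-1}r))=\xi_i(s^{-1}r)=((\Ind_H^G W_i)_s\xi_i)(r)$. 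Thus under $U$, the integrated representation $\tilde\pi_{z,W}\rtimes \Ind_H^G W$ decomposes as $\bigoplus_i (\tilde\pi_{z,W_i}\rtimes \Ind_H^G W_i)$, which is the desired equivalence.

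The only mildly delicate point, and the place I expect to need the most care, is justifying the interchange of sum and integral in the $k=\infty$ case and verifying that the inverse map is defined on a dense subspace; both are handled by approximating by tuples with only finitely many nonzero entries and appealing to Tonelli's theorem applied to the nonnegative pointwise norms.
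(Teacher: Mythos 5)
Your proof is correct, but it takes a more hands-on route than the paper. The paper's proof is two lines: it observes that $\pi_{z,W}\rtimes W$ is unitarily equivalent to $\oplus_i\,\pi_{z,W_i}\rtimes W_i$ as a representation of $C_0(X)\rtimes_\lt H$, and then invokes the general fact that induction (being given by tensoring with Green's bimodule $Z_H$) preserves direct-sum decompositions. You instead pass to the Mackey realization $\tilde\pi_{z,W}\rtimes\Ind_H^G W$ on $L^2_W$ and build the intertwining unitary explicitly from the projections $P_i$, checking isometry, surjectivity and equivariance by hand. Both arguments are sound. The paper's approach buys brevity and avoids any discussion of the concrete Hilbert space, at the cost of citing the functoriality of bimodule induction as a black box; yours is self-contained modulo the identification $\Ind_H^G(\pi\rtimes W)\simeq\tilde\pi\rtimes\Ind_H^G W$, which you should note is needed here for the possibly infinite-dimensional, non-irreducible $W$ (the version of \cite[Proposition~5.4]{tfb^2} for general covariant pairs does cover this, but the paper's summary in \S\ref{subsec-inducedreps} states the Mackey construction only for $V\in\hat H$). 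Your handling of the $k=\infty$ case via Tonelli and density of finitely supported tuples is the right way to close the one genuinely delicate point.
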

\begin{proof}
Here $\pi_{z,W}\rtimes W$ is unitarily equivalent to $\oplus_i \pi_{z,W_i}\rtimes W_i$. The lemma follows because induction preserves direct-sum decompositions.
\end{proof}

For two representations $V$ and $W$ of a subgroup $H$ of $G$, we  write $[V:W]$ for the multiplicity of $W$ in $V$.

\begin{prop}\label{prop-sum-traces}  Let $(G,X)$ be a second-countable transformation group with $G$ compact.   Let $z\in X$,  $H$ a closed subgroup of $G$ such that $H\subset S_z$, and $V\in \hat H$. Then
\[
\Ind_H^G(\pi_{z,V}\rtimes V)\simeq\bigoplus_{W\in \hat S_z}\big[W|_H:V\big]\, \Ind_{S_z}^G(\pi_{z,W}\rtimes W).
\]
\end{prop}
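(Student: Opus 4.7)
The plan is to combine induction in stages with Frobenius reciprocity (applicable because $S_z$ is compact) and then apply the previous lemma.

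First, since $H \subset S_z \subset G$, I would invoke induction in stages for Green's induction bimodule (see, for instance, the standard result $Z_H \simeq Z_{S_z} \otimes_{C_0(X)\rtimes S_z} Z_{H}^{S_z}$ in \cite{tfb^2}) to write
\[
\Ind_H^G(\pi_{z,V}\rtimes V)\simeq \Ind_{S_z}^G\bigl(\Ind_H^{S_z}(\pi_{z,V}\rtimes V)\bigr).
\]

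Second, I would identify the inner induced representation. Using the concrete realisation recalled in \S\ref{subsec-inducedreps}, $\Ind_H^{S_z}(\pi_{z,V}\rtimes V)$ is equivalent to $\tilde\pi \rtimes \Ind_H^{S_z} V$ acting on $L_V^2(S_z/H)$, where $(\tilde\pi(f)\xi)(r) = \pi_{z,V}(\lt_{r^{-1}} f)\xi(r) = f(r\cdot z)\xi(r)$ for $r\in S_z$. The key observation is that $r\cdot z = z$ for every $r\in S_z$, so $\tilde\pi(f)$ acts as the scalar $f(z)$ on $L_V^2(S_z/H)$. Hence $\tilde\pi = \pi_{z,\,\Ind_H^{S_z} V}$ and therefore
\[
\Ind_H^{S_z}(\pi_{z,V}\rtimes V)\simeq \pi_{z,\,\Ind_H^{S_z} V}\rtimes \Ind_H^{S_z} V.
\]

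Third, because $S_z$ is compact, the Peter--Weyl theorem together with Frobenius reciprocity gives the decomposition
\[
\Ind_H^{S_z} V \simeq \bigoplus_{W\in \hat S_z} [W|_H : V]\, W
\]
as a unitary representation of $S_z$. Substituting this into the previous step and then applying Lemma~\ref{lem-iain} (which allows induction from $S_z$ to $G$ to be distributed over the direct-sum decomposition of the $S_z$-representation at the fixed point $z$) yields
\[
\Ind_{S_z}^G\bigl(\pi_{z,\,\Ind_H^{S_z}V}\rtimes \Ind_H^{S_z}V\bigr) \simeq \bigoplus_{W\in \hat S_z}[W|_H:V]\, \Ind_{S_z}^G(\pi_{z,W}\rtimes W),
\]
which combined with the first step is the desired equivalence.

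The main obstacle will be justifying the second step cleanly, that is, recognising that induction from $H$ to $S_z$ of the evaluation-at-$z$ covariant pair produces again an evaluation-at-$z$ covariant pair. This relies on the crucial (and easily overlooked) fact that $r\cdot z = z$ for all $r\in S_z$, which collapses the twist by $\lt_{r^{-1}}$ in the formula for the induced $C_0(X)$-action; everything else is a bookkeeping application of induction in stages and Lemma~\ref{lem-iain}.
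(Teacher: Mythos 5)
Your proposal is correct and follows essentially the same route as the paper's own proof: induction in stages, the observation that $r\cdot z=z$ for $r\in S_z$ collapses $\tilde\pi$ to evaluation at $z$, Frobenius reciprocity to decompose $\Ind_H^{S_z}V$, and Lemma~\ref{lem-iain} to distribute the induction over the direct sum. The step you flag as the main obstacle is exactly the short computation the paper carries out.
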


\begin{proof}
Since $H\subset S_z\subset G$, by induction in stages for the induction process $\Ind_H^G$ (\cite[Theorem~5.9]{tfb^2}), we have
\begin{align*}\label{eq-trace2}
\Ind_H^G(\pi_{z,V}\rtimes V)&
\simeq \Ind_{S_z}^G\Big(\Ind_H^{S_z}(\pi_{z,V}\rtimes V)\Big).
\end{align*}
The key now is to observe that we can realise $\Ind_H^{S_z}(\pi_{z,V}\rtimes V)$ on the Hilbert space $L_V^2$ of $\Ind_H^{S_z} V$ as $\tilde\pi_{z,V}\rtimes\Ind_H^{S_z} V$ and that $\tilde\pi_{z,V}=\pi_{z,V}$. To see the latter, we compute for  $\xi\in L_V$ and $r\in S_z$:
 \[(\tilde\pi_{z,V}(f)\xi)(r)=f(r\cdot z)\xi(r)=f(z)\xi(r)=(\pi_{z,V}(f)\xi)(r). \]
Thus $\tilde\pi_{z,V}=\pi_{z,V}$. Now
\begin{align}\label{eq-trace3}
\Ind_H^G(\pi_{z,V}\rtimes V)&\simeq\Ind_{S_z}^G\Big( \tilde\pi_{z,V}\rtimes\Ind_H^{S_z} V \Big)\notag\\
&= \Ind_{S_z}^G\Big( \pi_{z,V}\rtimes\Ind_H^{S_z} V \Big).
\end{align}
The induced representation $\Ind_H^{S_z} V$ of the compact group $S_z$ is a direct sum of irreducible representations. Using Frobenius Reciprocity (see, for example, \cite[Theorem~7.4.1]{DE}) we get
\[
\Ind_H^{S_z} V=\bigoplus_{W\in \hat S_z}\big[\Ind_H^{S_z} V:W\big]\, W=\bigoplus_{W\in \hat S_z}\big[W|_H:V\big]\, W.
\]
Since $S_z$ is second-countable and compact,  $\hat S_z$ is second-countable and discrete. Thus the direct sum above is countable.
Now
\begin{align*}
\eqref{eq-trace3}&=\Ind_{S_z}^G\Big(\pi_{z,V}\rtimes\Big(\bigoplus_{W\in \hat S_z}\big[W|_H:V\big]\, W \Big)\Big)\\
&\simeq\bigoplus_{W\in \hat S_z}\big[W|_H:V\big]\, \Ind_{S_z}^G(\pi_{z,W}\rtimes W)
\end{align*}
using Lemma~\ref{lem-iain}.
\end{proof}

The following result replaces an earlier version (involving a constant sequence of stability subgroups and representations) which sufficed for the proof of Theorem~\ref{thm-pss}. We are grateful to the referee for suggesting the use of Fell's topology for subgroup representations in this more general version.

\begin{prop}\label{prop-traces-converging-new-new} Let $(G,X)$ be a second-countable transformation group with $G$ compact. Suppose that $x_n\to z$ in $X$ and that $(S_{x_n},V_n)\to (H, V)$ in Fell's topology for subgroup representations, where $H$ is a closed subgroup of $G$, $V\in \hat H$ and $V_n\in\widehat{S_{x_n}}$ ($n\geq1$). Then $H\subset S_z$ and, for all $a\in C(G, C_c(X))^+\subset C_0(X)\rtimes_\lt G$,
\begin{align}
\tr\big( \Ind_{S_{x_{n}}}^G(\pi_{x_{n}}\rtimes V_n)(a) \big)
&\to
\tr\big( \Ind_H^G(\pi_{z}\rtimes V)(a) \big)\label{eq-traces-converging-1}\\
&=\sum _{W\in \hat S_z}[W|_H:V]\,\tr \big(\Ind_{S_z}^G(\pi_{z,W}\rtimes W) (a)\big).\label{eq-traces-converging-2}
\end{align}
\end{prop}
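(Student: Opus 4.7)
The plan is to handle the three assertions in turn. First, $H\subset S_z$ is a continuity argument: convergence $S_{x_n}\to H$ in the Fell topology on $\Sigma(G)$ provides, for each $h\in H$, a sequence $h_n\in S_{x_n}$ with $h_n\to h$, and joint continuity of the action gives $h\cdot z=\lim h_n\cdot x_n=z$, so $h\in S_z$. The equality \eqref{eq-traces-converging-2} is then immediate: apply Proposition~\ref{prop-sum-traces} to the representation $\Ind_H^G(\pi_{z,V}\rtimes V)$ (permissible since $H\subset S_z$) to obtain a direct-sum decomposition, then take traces.

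The substantive task is \eqref{eq-traces-converging-1}. My plan is to apply Proposition~\ref{prop-long-calculation} to each trace and then check that the inner integrand is invariant under right multiplication of the outer variable by the appropriate subgroup (which uses $H\subset S_z$, the inversion- and conjugation-invariance of Haar measure on a compact group, and cyclicity of trace). Invariance together with \eqref{eq-double-integral} converts the $\nu$-integral over the coset space into a $\mu_G$-integral over $G$, and after the substitution $u=t^{-1}$ this gives
\[\tr\bigl(\Ind_{S_{x_n}}^G(\pi_{x_n}\rtimes V_n)(a)\bigr)=\int_G\Phi_n(r)\,d\mu_G(r), \quad \tr\bigl(\Ind_H^G(\pi_z\rtimes V)(a)\bigr)=\int_G\Phi(r)\,d\mu_G(r),\]
where $\Phi(r)=\int_H a(rur^{-1})(r\cdot z)\tr(V_u)\,d\mu_H(u)$ and $\Phi_n$ is the analogue for $(S_{x_n},V_n,x_n)$. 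The essential reformulation is that $F_{r,z}(K,u):=a(rur^{-1})(r\cdot z)$ defines a continuous function on $Y$ independent of the subgroup slot, so $\Phi(r)=\tr\bigl(W^{H,V}(F_{r,z})\bigr)$ and similarly $\Phi_n(r)=\tr\bigl(W^{S_{x_n},V_n}(F_{r,x_n})\bigr)$, with $F_{r,x_n}\to F_{r,z}$ uniformly on $Y$ as $n\to\infty$.

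To prove $\Phi_n(r)\to\Phi(r)$ pointwise, split
\[|\Phi_n(r)-\Phi(r)|\leq\bigl|\tr\bigl(W^{S_{x_n},V_n}(F_{r,x_n}-F_{r,z})\bigr)\bigr|+\bigl|\tr\bigl(W^{S_{x_n},V_n}(F_{r,z})\bigr)-\tr\bigl(W^{H,V}(F_{r,z})\bigr)\bigr|.\]
For the first summand, the Cauchy--Schwarz bound $\int_K|\chi_U(u)|\,d\mu_K(u)\leq\|\chi_U\|_{L^2(K)}=1$ (valid for any irreducible unitary representation $U$ of a compact group $K$ with $\mu_K(K)=1$) gives control by $\|F_{r,x_n}-F_{r,z}\|_\infty$, which tends to $0$ by joint continuity of $a$ and of the action. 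The second summand I expect to vanish in the limit by using $(S_{x_n},V_n)\to(H,V)$ in Fell's topology for subgroup representations applied to the test function $F_{r,z}\in C(Y)$, together with the continuous choice of Haar measures on $\Sigma(G)$. The same Cauchy--Schwarz estimate gives $|\Phi_n(r)|\leq\|a\|_\infty$ uniformly, so dominated convergence on the compact group $G$ yields \eqref{eq-traces-converging-1}.

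The hard part will be the second summand in the splitting. Convergence of $W^{S_{x_n},V_n}$ to $W^{H,V}$ in the Jacobson topology of $C^*_S(G)^\wedge$ does not formally imply convergence of the associated traces at a fixed element of $C_c(Y)$; the passage requires Fell's explicit description of his subgroup-representation topology in terms of approximating matrix coefficients on convergent sequences $h_n\in S_{x_n}$ with limit in $H$, together with finite-dimensionality of the $V_n$ and $V$ and the continuous dependence of the Haar integral on the subgroup. This is presumably the reason the authors thank the referee for suggesting Fell's topology for the present general formulation.
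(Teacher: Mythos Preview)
Your overall architecture is sound and close to the paper's: convert both traces via Proposition~\ref{prop-long-calculation} to integrals over $G$ (using the normalisation of Haar measures to replace $\int_{G/H}$ by $\int_G$), then establish convergence of the resulting integrals. The inclusion $H\subset S_z$ and the equality \eqref{eq-traces-converging-2} are handled correctly.

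The genuine gap is exactly where you locate it: the second summand. You correctly observe that convergence $(S_{x_n},V_n)\to(H,V)$ in the Jacobson topology of $C^*_S(G)^\wedge$ does not by itself yield $\tr\bigl(W^{S_{x_n},V_n}(f)\bigr)\to\tr\bigl(W^{H,V}(f)\bigr)$ for fixed $f\in C(Y)$, and you gesture toward ``Fell's explicit description'' without carrying out the argument. The paper fills this gap with two results of Baggett \cite{Bag}. First, by the Lemma on p.~205 of \cite{Bag}, one passes to a \emph{subnet} $(x_{n_\beta})$ along which $S_{x_{n_\beta}}\to H$ in $\Sigma(G)$ and, crucially, $\tr\bigl(V_{n_\beta}(t_{n_\beta})\bigr)\to\tr\bigl(V(t)\bigr)$ whenever $t_{n_\beta}\in S_{x_{n_\beta}}$ and $t_{n_\beta}\to t\in H$. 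This is precisely the ``character convergence along convergent group elements'' that your sketch needs but does not supply. Second, Proposition~4 on p.~184 of \cite{Bag} (together with Fubini) converts this hypothesis directly into convergence of the double integrals $\int_{G}\int_{S_{x_{n_\beta}}}E_{n_\beta}\to\int_{G}\int_{H}E$ for the full integrand $E_n(t,r)=a(rt^{-1}r^{-1})(r\cdot x_n)\tr(V_n(t)^*)$; no splitting into two summands is required. Since Baggett's Lemma only furnishes a subnet, the paper wraps the whole argument in the standard reduction: to prove that a sequence of real numbers converges it suffices to show that every subnet has a further subnet along which the desired limit holds. Your outline omits this subnet reduction, and without it (or an alternative argument giving character convergence for the full sequence) the proof does not close.

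A minor point: your opening claim that $(S_{x_n},V_n)\to(H,V)$ in Fell's topology implies $S_{x_n}\to H$ in $\Sigma(G)$ is correct but not immediate from the definition; it is \cite[Lemma~2.5]{FellII} (invoked later in the paper in the proof of Theorem~\ref{thm-new-general}), or alternatively it comes out along the Baggett subnet.
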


\begin{proof}
Let $a\in C(G, C_c(X))^+$. By a standard argument of general topology, in order to establish \eqref{eq-traces-converging-1}, it suffices to consider an arbitrary subnet $(x_{n_{\beta}})_{\beta}$ of $(x_n)$ and to show that there is a subnet of this subnet for which the corresponding convergence of traces holds.
By \cite[Lemma on page~205]{Bag}, we may pass to a subnet of $(x_{n_{\beta}})_{\beta}$ and relabel so that $S_{x_{n_{\beta}}}\to H $ in $\Sigma(G)$ and $\tr(V_{n_{\beta}}(t_{n_{\beta}}))\to \tr(V(t))$ whenever $t_{n_{\beta}}\to_{\beta} t$ in $G$ with $t\in H$ and $t_{n_{\beta}}\in S_{x_{n_{\beta}}}$ for all $\beta$. In particular, since $S_{x_{n_{\beta}}}\to H $ and $G$ acts continuously on the Hausdorff space $X$, it follows that $H\subset S_z$ and hence that  $(\pi_z,V)$ is indeed a covariant pair.

For each $n\geq1$, $t\in S_{x_n}$ and $r\in G$, let \[E_n(t,r)= a(rt^{-1}r^{-1})( r\cdot x_n)\tr(V_n(t)^*).\]
Then $E_n(t,r)$ is precisely the integrand at \eqref{eq-calculate-trace2} with $x$ replaced by $x_n$, $H$ replaced by $S_{x_n}$, and $V$ replaced by $V_n$. Since $a$ is continuous and the finite-dimensional representation $V_n$ is norm-continuous, $E_n$ is continuous on $S_{x_n}\times G$.
Similarly, for $t\in H$ and $r\in G$,  set
\[
E(t, r)=a(rt^{-1}r^{-1})( r\cdot z)\tr(V(t)^*).
\]
Then $E$ is continuous on $H\times G$. Suppose that $(t_{n_{\beta}}, r_{\beta}) \to (t,r)$ in $G\times G$ where $(t,r)\in H\times G$ and
$(t_{n_{\beta}}, r_{\beta}) \in S_{x_{n_{\beta}}}\times G$ for each $\beta$. Then $E_{n_{\beta}}(t_{n_{\beta}}, r_{\beta}) \to E(t,r)$ by the continuity of $a$ and the convergence of $\tr(V_{n_{\beta}}(t_{n_{\beta}}))$ to $\tr(V(t))$. Furthermore, since $S_{x_{n_{\beta}}}\to H $ in $\Sigma(G)$, we have $S_{x_{n_{\beta}}}\times G \to H\times G $ in $\Sigma(G\times G)$.
 By \cite[Proposition~4 on page~184]{Bag} and Fubini's theorem
\[\int_G\int_{S_{x_{n_{\beta}}}} E_{n_{\beta}}(t,r)\, d\mu_{S_{x_{n_{\beta}}}}(t)\, d\mu(r)\to \int_G\int_H E(t,r)\, d\mu_H(t)\, d\mu(r).\]
By our choice of Haar measures on the subgroups,
\[\int_{G/S_{x_{n_{\beta}}}}\int_{S_{x_{n_{\beta}}}} E_{n_{\beta}}(t,r)\, d\mu_{S_{x_{n_{\beta}}}}(t)\, d\nu_{S_{x_{n_{\beta}}}}(rS_{x_{n_{\beta}}})\to \int_{G/H}\int_H E(t,r)\, d\mu_H(t)\, d
\nu_H(rH).\]
Hence Proposition~\ref{prop-long-calculation} gives \eqref{eq-traces-converging-1} with $n$ replaced by $n_{\beta}$. As indicated at the outset, this suffices to establish \eqref{eq-traces-converging-1}.
Then \eqref{eq-traces-converging-2} follows by Proposition~\ref{prop-sum-traces}.
\end{proof}

\begin{prop}\label{ref-propn2} Let $(G,X)$ be a second-countable transformation group with $G$ compact. Suppose that $x_n\to z$ in $X$ and suppose that  $(S_{x_n},R_n)\to (H, R)$ in Fell's topology for subgroup representations, where $H$ is a closed subgroup of $G$, $R\in \hat H$ and $R_n\in\widehat{S_{x_n}}$ ($n\geq1$).
  Then $H\subset S_z$ and, for all $V\in \hat S_z$,
\[M_U\big(\Ind_{S_z}^G(\pi_{z}\rtimes V)\big)\geq \big[ V|_H:R \big].\]
\end{prop}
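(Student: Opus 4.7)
The inclusion $H\subset S_z$ follows exactly as in the first paragraph of the proof of Proposition~\ref{prop-traces-converging-new-new}: given $h\in H$, the subgroup convergence $S_{x_n}\to H$ in $\Sigma(G)$ produces $h_n\in S_{x_n}$ with $h_n\to h$, and joint continuity of the action then forces $h\cdot z=\lim h_n\cdot x_n=\lim x_n=z$. The rest of the proof is devoted to the multiplicity inequality.

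The main step is to apply Proposition~\ref{prop-traces-converging-new-new} with $V_n:=R_n$ and $V:=R$. This immediately gives, for every $a\in C(G,C_c(X))^+$,
\begin{equation*}
\lim_n\tr\bigl(\Ind_{S_{x_n}}^G(\pi_{x_n}\rtimes R_n)(a)\bigr)
= \sum_{W\in\hat S_z}[W|_H:R]\,\tr\bigl(\Ind_{S_z}^G(\pi_{z,W}\rtimes W)(a)\bigr).
\end{equation*}
Because every summand is non-negative, discarding the contributions with $W\neq V$ yields
\begin{equation*}
\liminf_n\tr\bigl(\Ind_{S_{x_n}}^G(\pi_{x_n}\rtimes R_n)(a)\bigr)
\geq [V|_H:R]\,\tr\bigl(\Ind_{S_z}^G(\pi_{z,V}\rtimes V)(a)\bigr).
\end{equation*}

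Write $\tau_n:=\Ind_{S_{x_n}}^G(\pi_{x_n}\rtimes R_n)$, $\sigma:=\Ind_{S_z}^G(\pi_{z,V}\rtimes V)$ and $k:=[V|_H:R]$, and assume $k\geq 1$ (otherwise there is nothing to prove). By density of $C(G,C_c(X))$ in $A:=C_0(X)\rtimes_\lt G$ together with continuous functional calculus (permissible since $A$ has bounded trace, so $\sigma(a)$ is trace class for every $a\in A^+$), we choose $a\in A^+$ with $\sigma(a)$ a rank-one projection. The inequality above then becomes $\liminf_n\tr(\tau_n(a))\geq k$; in particular $\sigma$ lies in the closure of $\{\tau_n\}$ in $\hat A$, and after passing to a subsequence we may assume $\tau_n\to\sigma$ in $\hat A$. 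The final step is the trace-based lower bound on upper multiplicity in the bounded-trace setting (cf.\ \cite{A} and the $M_U$ part of \cite[Theorem~2.6]{ASS}): the combination $\tau_n\to\sigma$ in $\hat A$, $\sigma(a)$ of rank one, and $\liminf_n\tr(\tau_n(a))\geq k$ forces $M_U(\sigma)\geq k$. This last implication, which amounts to extracting $k$ asymptotically orthogonal sequences of pure states all weak-$*$ converging to a vector state of $\sigma$, is the principal technical obstacle, and is precisely the point at which the existing $M_U$-versus-trace machinery of Archbold and Archbold--Somerset--Spielberg must be invoked.
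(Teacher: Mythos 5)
Your opening steps coincide with the paper's: the containment $H\subset S_z$ and the limit formula
\[
\tr\bigl(\Ind_{S_{x_n}}^G(\pi_{x_n}\rtimes R_n)(a)\bigr)\;\to\;\sum_{W\in\hat S_z}[W|_H:R]\,\tr\bigl(\Ind_{S_z}^G(\pi_{z,W}\rtimes W)(a)\bigr)
\]
for $a\in C(G,C_c(X))^+$ are exactly what Proposition~\ref{prop-traces-converging-new-new} supplies, and the paper uses them in the same way. The divergence, and the gap, lies in how you pass from traces to upper multiplicity. The paper feeds the \emph{entire} limit formula (valid for all $a$ in the dense $*$-subalgebra, with the limit representations $\Ind_{S_z}^G(\pi_{z,W}\rtimes W)$ mutually inequivalent for distinct $W$ by Lemma~\ref{lem-induction}) into Theorem~4.1 of \cite{ASS}, which returns the exact relative multiplicity $M\bigl(\Ind_{S_z}^G(\pi_{z}\rtimes V),(\rho_n)\bigr)=[V|_H:R]$ and hence the bound on $M_U$. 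You instead discard every summand except $W=V$ and try to conclude from the single inequality $\liminf_n\tr(\tau_n(a))\geq k$ at one element $a$ with $\sigma(a)$ a rank-one projection. That implication is false in general. Take $A=\{f\in C([0,1],M_2): f(0)\ \text{diagonal}\}$, let $\sigma(f)=f(0)_{11}$ and $\tau_n(f)=f(1/n)$; then $\tau_n\to\sigma$ and $M_U(\sigma)=1$, yet the constant function $a=\mathrm{diag}(1,k-1)$ lies in $A^+$, satisfies $\sigma(a)=1$ (a rank-one projection) and $\tr(\tau_n(a))=k$ for every $n$. The trace of $\tau_n(a)$ can be inflated either by eigenvalues larger than $1$ (nothing forces $\|\tau_n(a)\|\leq1$) or, as in this example and in your situation, by the other limit points of $(\tau_n)$ --- here the representations $\Ind_{S_z}^G(\pi_{z,W}\rtimes W)$ with $W\neq V$ and $[W|_H:R]\neq0$, which your $a$ is not required to annihilate. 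The result you appeal to, \cite[Theorem~2.6]{ASS}, goes the other way: it converts bounded traces into an \emph{upper} bound on $M_U$, not a trace lower bound into a multiplicity lower bound.

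There is also a secondary problem: your liminf inequality is established only for $a$ in the dense cone $C(G,C_c(X))^+$, while the element with $\sigma(a)$ a rank-one projection is manufactured by functional calculus inside $C_0(X)\rtimes_\lt G$. The maps $a\mapsto\tr(\tau_n(a))$ are densely defined traces rather than bounded functionals, so the inequality does not automatically transfer to such an element. Both difficulties vanish if you retain the full limit formula and invoke \cite[Theorem~4.1]{ASS} as the paper does, after noting via Lemma~\ref{lem-induction} that distinct $W\in\hat S_z$ yield inequivalent induced representations, which is the hypothesis that theorem needs.
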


\begin{proof}
Write $\rho_n=\Ind_{S_{x_n}}^G(\pi_{x_n}\rtimes R_n)$. Let $a\in C(G, C_c(X))^+\subset C_0(X)\rtimes_\lt G$. By Proposition~\ref{prop-traces-converging-new-new}, $H\subset S_z$ and
\begin{equation*}
\tr\big( \rho_n(a) \big)
\to
\sum _{W\in \hat S_z,\ [W|_H:R]\neq 0}[W|_H:R]\,\tr \big(\Ind_{S_z}^G(\pi_{z,W}\rtimes W) (a)\big).
\end{equation*}
Let $V\in \hat S_z$. If $[V|_H:R]=0$, then the result follows trivially.
Otherwise,  $[V|_H:R]\geq 1$. Then Theorem~4.1 of \cite{ASS} implies that $\Ind_{S_z}^G(\pi_{z}\rtimes V)$ is a limit of $(\rho_n)$ and
\[
\big[V|_H:R\big]= M\big( \Ind_{S_z}^G(\pi_{z}\rtimes V), (\rho_n) \big)\leq M_U\big(\Ind_{S_z}^G(\pi_{z}\rtimes V)\big).\qedhere\]
\end{proof}

\begin{lemma}\label{lemma-claim} Let $(G,X)$ be a proper second-countable transformation group.  Suppose that $\Ind_{S_{x_n}}^G(\pi_{x_n}\rtimes R_n)\to \Ind_{S_z}^G(\pi_{z}\rtimes V)$ in $(C_0(X)\rtimes G)^\wedge$.
By passing to a subsequence and moving within orbits, we may assume that $x_n\to z$ in $X$, and that there exist a closed subgroup $H\subset S_z$ and an irreducible subrepresentation $R$ of $V|_H$ such that $(S_{x_n}, R_n)\to (H, R)$ in Fell's topology for subgroup representations.
\end{lemma}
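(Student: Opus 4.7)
The plan is to proceed in three stages: first, use the convergence in the spectrum together with properness of the action to arrange, after passing to a subsequence and moving within orbits via Lemma~\ref{lemma-D}, that $x_n\to z$ in $X$; second, extract a further subsequence so that both $S_{x_n}$ and the irreducible representations $R_n$ converge, to a closed subgroup $H\subset S_z$ and a (possibly reducible) finite-dimensional representation $R_\infty$ of $H$ respectively; third, decompose $R_\infty$ into irreducibles and use trace convergence together with \cite[Theorem~4.1]{ASS} to single out one irreducible summand $R$ that is contained in $V|_H$.

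For the first stage, write $\pi_n:=\Ind_{S_{x_n}}^G(\pi_{x_n}\rtimes R_n)$ and $\pi:=\Ind_{S_z}^G(\pi_z\rtimes V)$, and observe that the restriction of $\pi_n$ to $C_0(X)\subset C_0(X)\rtimes G$ is supported on the orbit $G\cdot x_n$, i.e.\ $\ker(\pi_n)\cap C_0(X)$ is the ideal of functions vanishing on $G\cdot x_n$. The continuous orbit map $(C_0(X)\rtimes G)^\wedge\to X/G$ thus sends $\pi_n\mapsto G\cdot x_n$ and $\pi\mapsto G\cdot z$, so $G\cdot x_n\to G\cdot z$ in $X/G$. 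Because the action is proper, $X/G$ is Hausdorff and the orbit map $X\to X/G$ is open; choosing a countable decreasing base of open neighbourhoods of $z$ and picking orbit representatives in ever smaller members of the base, we find (after passing to a subsequence) elements $g_n\in G$ with $g_n\cdot x_n\to z$. By Lemma~\ref{lemma-D}, $\pi_n\simeq\Ind_{S_{g_n\cdot x_n}}^G(\pi_{g_n\cdot x_n}\rtimes(g_n\cdot R_n))$, so replacing $x_n$ by $g_n\cdot x_n$ and $R_n$ by $g_n\cdot R_n$ we may assume $x_n\to z$.

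For the second stage, properness of the action together with $x_n\to z$ ensures that the compact stability subgroups $S_{x_n}$ all lie in a single compact subset of $G$, so sequential compactness of $\Sigma(G)$ provides a subsequence along which $S_{x_n}\to H$ for some closed subgroup $H$, and continuity of the action forces $H\subset S_z$. Applying \cite[Lemma on page~205]{Bag} exactly as in the proof of Proposition~\ref{prop-traces-converging-new-new}, pass to a further subsequence so that $\tr(R_n(t_n))\to\chi(t)$ for every convergent net $t_n\to t$ with $t_n\in S_{x_n}$ and $t\in H$, where $\chi$ is the character of a finite-dimensional representation $R_\infty$ of $H$. In view of \cite[Lemma~2.8]{FellII} and the direct-sum decomposition of $W^{H,R_\infty}$ in $C^*_S(G)$, this is equivalent to $(S_{x_n},R_n)\to(H,R^{(j)})$ in Fell's topology for subgroup representations for every irreducible summand $R^{(j)}$ appearing in a decomposition $R_\infty\simeq\bigoplus_j R^{(j)}$.

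The final stage uses trace convergence to select the correct irreducible summand. The trace-formula arguments of Propositions~\ref{prop-long-calculation}--\ref{prop-traces-converging-new-new} transfer to the proper case because, for $a\in C(G,C_c(X))^+\cap C_c(G,C_c(X))$, properness ensures that $\{r\in G:r\cdot x\in\supp_X a\}$ is compact, making the integrals over $G/S_{x_n}$ converge absolutely with uniform bounds near $z$. Combining the extended Proposition~\ref{prop-traces-converging-new-new} with Proposition~\ref{prop-sum-traces} yields
\[
\tr(\pi_n(a))\;\longrightarrow\;\sum_{W\in\hat S_z}\Big(\sum_j [W|_H:R^{(j)}]\Big)\,\tr\big(\Ind_{S_z}^G(\pi_z\rtimes W)(a)\big).
\]
Since $\pi_n\to\pi=\Ind_{S_z}^G(\pi_z\rtimes V)$ in $(C_0(X)\rtimes G)^\wedge$, \cite[Theorem~4.1]{ASS} forces $\pi$ to appear with positive multiplicity in this limit, so $\sum_j[V|_H:R^{(j)}]\geq 1$ and hence $[V|_H:R^{(j_0)}]\geq 1$ for some $j_0$. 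Taking $R:=R^{(j_0)}$ gives the desired irreducible subrepresentation of $V|_H$, with $(S_{x_n},R_n)\to(H,R)$ already established in the second stage. I expect the main obstacle to be the verification that the trace-formula and Fell-topology machinery of \S\ref{sec-trace} extends from compact $G$ to proper transformation groups; localising to a compact $G$-invariant neighbourhood of $z$ on which the action is induced from a compact transformation group (see \cite{abel} and \S\ref{sec-Cartan}) should reduce this to the compact case already treated.
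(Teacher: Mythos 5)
Your overall strategy is genuinely different from the paper's: the paper does not build the convergence by hand at all, but instead invokes Neumann's description of the spectrum of $C_0(X)\rtimes G$ for proper actions as the quotient $G\backslash\Stab(X)^\wedge$, where $\Stab(X)^\wedge=\{(x,S_x,W)\}$ carries a topology whose closure operation \emph{by definition} produces a closed subgroup $H\subset S_z$, an irreducible subrepresentation $R$ of $V|_H$, and a net with $(S_{x_\alpha},R_\alpha)\to(H,R)$ in Fell's topology. Given the homeomorphism of \cite[Theorem~5.3.3]{Neu} and the openness of the quotient map, the lemma is then a translation exercise plus a second-countability argument to replace nets by subsequences. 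Your first stage (moving within orbits via the orbit map and Lemma~\ref{lemma-D} to get $x_n\to z$) matches the paper's mechanism, but the remainder diverges.

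The divergent part has two genuine gaps. The critical one is your second stage. Baggett's lemma on page~205 is used in Proposition~\ref{prop-traces-converging-new-new} to pass from an \emph{already given} convergence $(S_{x_n},V_n)\to(H,V)$ in Fell's topology to convergence of the characters along a subnet; it does not manufacture a limit representation $R_\infty$ of $H$ from nothing. To extract such a limit you would need at minimum a uniform bound on $\dim R_n$ (not established -- in Claim~C of Theorem~\ref{thm-pss} such a bound is derived from $\ker\Ind R_n\to\ker\Ind V$ via \cite[Lemma~4.8]{EH} and Frobenius reciprocity, but only after the stabilisers have been made constant), an argument that a limit of characters is again the character of a finite-dimensional representation of $H$, and a proof that character convergence implies convergence to each irreducible summand in Fell's topology for subgroup representations; citing \cite[Lemma~2.8]{FellII}, which is only the bijection $(H,V)\mapsto W^{H,V}$, does not deliver this. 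The second gap is that the trace formulae of \S\ref{sec-trace} are proved only for compact $G$ (compactness of $G/H$ and the normalisation $\nu_H(G/H)=1$ are used throughout Proposition~\ref{prop-long-calculation}), and your closing remark that localisation via \cite{abel} ``should reduce this to the compact case'' is precisely the unexecuted step: one must first pass to the ideal $C_0(G\cdot U)\rtimes G$, transport the representations and their multiplicities across the Morita equivalence with $C_0(Y)\rtimes K$, and only then apply the compact-group machinery. Since the lemma as stated is for proper $(G,X)$, this cannot be waved away. Your third stage (using \cite[Theorem~4.1]{ASS} to force $\sum_j[V|_H:R^{(j)}]\geq1$ and select $R=R^{(j_0)}$) is sound logic conditional on the earlier stages, but as written the proof does not go through.
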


\begin{proof} We start by describing the homeomorphism of \cite[Theorem~5.3.3]{Neu} of a quotient space of\[
\Stab(X)^\wedge:=\{(x, S_x, W): x\in X, W\in \hat{S_x}\}
\]
onto the spectrum of $C_0(X)\rtimes G$. The topology on $\Stab(X)^\wedge$ is defined in terms of a closure operation in \cite[Definition~2.3.1 and Proposition~2.3.3]{Neu}. For $A\subset \Stab(X)^\wedge$, the closure of $A$ consists of all $(z, S_z, V)\in \Stab(X)^\wedge$ for which there exist a closed subgroup $H\subset S_z$, an irreducible subrepresentation $R$ of $V|_H$ and a net $(x_\alpha, S_{x_\alpha}, R_\alpha)$ in $A$ such that $x_\alpha\to z$ in $X$ and $(S_{x_\alpha}, R_\alpha)\to (H, R)$ in Fell's topology for subgroup representations.  It follows from \cite[Theorem~6.1.4]{Neu} that $\Stab(X)^\wedge$ is homeomorphic to the spectrum of a separable $C^*$-algebra and hence is second countable.

The map $(g, (x, S_x, W))\mapsto (g\cdot x, S_{g\cdot x}, g\cdot W)$ gives a continuous action of $G$ on $\Stab(X)^\wedge$. Notice that by Lemma~\ref{lemma-D}, $\Ind_{S_x}^G(\pi_{x}\rtimes W)\simeq \Ind_{S_{g\cdot x}}^G(\pi_{g\cdot x}\rtimes (g\cdot W))$.  It is well-known that the map $(x, S_x, V)\mapsto \Ind_{S_x}^G(\pi_{x}\rtimes W)$ factors through a bijection $\Ind^G$ of $G\backslash \Stab(X)^\wedge$ onto the spectrum of $C_0(X)\rtimes G$.
The content of \cite[Theorem~5.3.3]{Neu} is that the assignment $[(x, S_x, V)]\mapsto \Ind_{S_x}^G(\pi_{x}\rtimes W)$ defines a homeomorphism from  $G\backslash \Stab(X)^\wedge$ with the quotient topology onto the spectrum of $C_0(X)\rtimes G$ (the main work involves showing that the map is open).

Since $\Ind_{S_{x_n}}^G(\pi_{x_n}\rtimes R_n)\to \Ind_{S_z}^G(\pi_{z}\rtimes V)$,  we have $[(x_n, S_{x_n}, R_n)]\to [(z, S_z, V)]$ in $G\backslash \Stab(X)^\wedge$. The quotient map $\Stab(X)^\wedge\to G\backslash \Stab(X)^\wedge$ is open and $\Stab(X)^\wedge$ is second -countable, and hence there is a subsequence $((x_{n_k}, S_{x_{n_k}}, V_{n_k}))_k$ of $((x_{n}, S_{x_{n}}, V_{n}))_n$ and a sequence $(g_k)$ in $G$ such that $g_k\cdot (x_{n_k}, S_{x_{n_k}}, R_{n_k})=(g_k\cdot x_{n_k}, S_{g_k\cdot x_{n_k}}, g_k\cdot R_{n_k})\to (z, S_z, V)$ in $\Stab(X)^\wedge$. But $\Ind_{S_{x_{n_k}}}^G(\pi_{x_{n_k}}\rtimes R_{n_k})\simeq  \Ind_{S_{g_k\cdot x_{n_k}}}^G(\pi_{g_k\cdot x_{n_k}}\rtimes g_k\cdot R_{n_k})$ by Lemma~\ref{lemma-D}. Thus we may replace $x_{n_k}$ by $g_k\cdot x_{n_k}$ and $R_{n_k}$ by $g_k\cdot R_{n_k}$, and relabel, and then assume that $(x_n, S_{x_n}, R_n)\to (z, S_z, V)$ in $\Stab(X)^\wedge$.
Now by the definition of the closure operation in $\Stab(X)^\wedge$, there exist a closed subgroup $H\subset S_z$, an irreducible subrepresentation $R$ of $V|_H$ and a subnet $(x_{n_\alpha}, S_{x_{n_\alpha}}, R_{n_\alpha})$ such that $x_{n_\alpha}\to z$ in $X$ and $(S_{x_{n_\alpha}}, R_{n_\alpha})\to (H, R)$ in Fell's topology for subgroup representations. But both $X$ and $S(G)^\wedge$ are second countable, and a routine argument gives a subsequence $(x_{n_j})$ (indexed by $j\in\NN$) of  $(x_n)$ such that $x_{n_j}\to z$ and  $(S_{x_{n_j}},R_{x_{n_j}})\to (H,R)$.
\end{proof}

\begin{thm}\label{thm-new-general} Let $(G,X)$ be a second-countable transformation group with $G$ compact. Let $z\in X$ and $V\in \hat S_z$.
 Then there exist a closed subgroup $H$ of $G$ with $H\subset S_z$ and an irreducible subrepresentation $R$ of $V|_H$
such that
\[M_U\big(\Ind_{S_z}^G(\pi_{z}\rtimes V)\big) =[ V|_H:R \big] \leq \dim V < \infty.\]
Moreover, if the stabiliser map $X\to \Sigma(G)$, $x\to S_x$, is continuous at $z$ then
$$M_U\big(\Ind_{S_z}^G(\pi_{z}\rtimes V)\big) =1.$$
\end{thm}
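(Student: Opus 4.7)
The strategy is to find a sequence $(\pi_n)\to \pi:=\Ind_{S_z}^G(\pi_z\rtimes V)$ that realises $M_U(\pi)$, apply Lemma~\ref{lemma-claim} to extract candidate data $(H,R)$, and then exploit Proposition~\ref{ref-propn2} together with the trace formula of Proposition~\ref{prop-traces-converging-new-new} to obtain matching inequalities.

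First, since $S_z$ is compact, $V$ is finite-dimensional, so the bound $[V|_H:R]\leq \dim V$ is automatic for any subgroup $H\subseteq S_z$ and any irreducible subrepresentation $R$ of $V|_H$. Because $C_0(X)\rtimes G$ is separable and has bounded trace, $M_U(\pi)$ is a positive integer, so I can choose a sequence $(\pi_n)\to \pi$ in $(C_0(X)\rtimes G)^\wedge$ with $M(\pi,(\pi_n))=M_U(\pi)$; by Lemma~\ref{lem-induction} I may write $\pi_n\simeq \Ind_{S_{x_n}}^G(\pi_{x_n}\rtimes R_n)$ for some $x_n\in X$ and $R_n\in \widehat{S_{x_n}}$.

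Since $G$ is compact, the action is proper, so Lemma~\ref{lemma-claim} applies. Passing to a subsequence (which does not decrease $M(\pi,(\pi_n))$ and so preserves the equality $M(\pi,(\pi_n))=M_U(\pi)$) and translating each $x_n$ within its $G$-orbit (which leaves $\pi_n$ unitarily equivalent by Lemma~\ref{lemma-D}), I may assume $x_n\to z$ in $X$ and that there exist a closed subgroup $H\subseteq S_z$ and an irreducible subrepresentation $R$ of $V|_H$ with $(S_{x_n},R_n)\to (H,R)$ in Fell's topology for subgroup representations. In particular $[V|_H:R]\geq 1$. Proposition~\ref{ref-propn2} then yields $M_U(\pi)\geq [V|_H:R]$. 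For the reverse inequality, Proposition~\ref{prop-traces-converging-new-new} combined with \cite[Theorem~4.1]{ASS}, used exactly as in the proof of Proposition~\ref{ref-propn2}, supplies $M(\pi,(\pi_n))=[V|_H:R]$; together with $M(\pi,(\pi_n))=M_U(\pi)$ this delivers the equality.

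For the moreover part, continuity of the stabiliser map at $z$ combined with $x_n\to z$ forces $S_{x_n}\to S_z$ in the Hausdorff space $\Sigma(G)$, and uniqueness of limits then gives $H=S_z$. Consequently $V|_H=V$ is irreducible, $R=V$, and $[V|_H:R]=1$. The main obstacle is supplied (and handled) by Lemma~\ref{lemma-claim}: it is precisely the passage to a subsequence whose stabiliser representations converge in Fell's topology for subgroup representations that allows the abstract upper multiplicity to be identified with a concrete branching multiplicity.
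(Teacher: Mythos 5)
Your proposal is correct and follows essentially the same route as the paper: realise $M_U$ by a sequence via \cite[Lemma~1.2]{AK}, write the terms as induced representations, invoke Lemma~\ref{lemma-claim} to extract the convergent data $(S_{x_n},R_n)\to(H,R)$, and then identify the multiplicity with $[V|_H:R]$ via Proposition~\ref{prop-traces-converging-new-new} and \cite[Theorem~4.1]{ASS}, with the continuity case handled by comparing the two limits of $S_{x_n}$ in the Hausdorff space $\Sigma(G)$. The only cosmetic difference is that you route the lower bound through Proposition~\ref{ref-propn2} before closing the gap, whereas the paper obtains the equality directly from the trace convergence.
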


\begin{proof}
Set $\rho:=\Ind_{S_z}^G(\pi_{z}\rtimes V)$. By \cite[Lemma~1.2]{AK} there exists a sequence $(\rho_n)$  such that the upper multiplicity $M_U(\rho)$  equals the common upper and lower multiplicity $M(\rho, (\rho_n))$ of $\rho$ relative to the sequence $(\rho_n)$. Implicit in this assertion is that $\rho_n\to\rho$ because the lower relative multiplicity is nonzero (see \cite[\S2]{AS}). We note here, for multiple future use, that if $(\rho_{n_k})$ is a subsequence of $(\rho_n)$, then  $M(\rho, (\rho_{n_k}))=M(\rho, (\rho_n))$ (see \cite[Equation~2.3]{AS}), and that the multiplicity numbers depend only  on the class of the irreducible representation.  For each $n$ there exists $x_n\in X$ and $R_n\in\hat S_{x_n}$ such that $\rho_n=\Ind_{S_{x_n}}^G(\pi_{x_n}\rtimes R_n)$.

By Lemma~\ref{lemma-claim}, by passing to a subsequence and moving within orbits, we may assume that $x_n\to z$ in $X$, and that there exist a closed subgroup $H\subset S_z$ and an irreducible subrepresentation $R$ of $V|_H$ such that $(S_{x_n}, R_n)\to (H, R)$ in Fell's topology for subgroup representations.

By Proposition~\ref{prop-traces-converging-new-new}, for all $a\in C(G, C_c(X))^+\subset C_0(X)\rtimes_\lt G$ we have
\[
\tr\big( \Ind_{S_{x_n}}^G(\pi_{x_{n}}\rtimes R_n)(a) \big)
\to
\sum _{W\in \hat S_z}[W|_H:R]\,\tr \big(\Ind_{S_z}^G(\pi_{z,W}\rtimes W) (a)\big).
\]
Now Theorem~4.1 of \cite{ASS} implies that \[\big[V|_H:R\big]= M\big( \rho, (\rho_n) \big)= M_U(\rho).\]

Finally, suppose that the stabiliser map is continuous at $z$. Then $S_{x_n} \to S_z$ in $\Sigma(G)$.  On the other hand, it follows from \cite[Lemma 2.5]{FellII} that $S_{x_n} \to H$. Since $\Sigma(G)$ is Hausdorff, $H=S_z$. Then the irreducibility of $V$ implies that
$M_U\big(\Ind_{S_z}^G(\pi_{z}\rtimes V)\big) =1$. \qedhere
\end{proof}

\begin{cor}\label{cor-dim1} Let $(G,X)$ be a second-countable transformation group with $G$ compact.
\begin{enumerate}
\item Let $z\in X$ and $V\in {\hat S_z}$ be a character.  Then \[M_U\big(\Ind_{S_z}^G(\pi_{z}\rtimes V)\big)=1.\]
\item Suppose that $S_z$ is abelian for all $z\in X$. Then $C_0(X)\rtimes_{\rm lt}G$ is a Fell algebra.
\end{enumerate}
\end{cor}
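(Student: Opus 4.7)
Both parts follow fairly quickly from Theorem~\ref{thm-new-general} once one inspects what happens when $\dim V = 1$, so the plan is essentially to feed the dimension bound into that theorem and then invoke Lemma~\ref{lem-induction} to reduce general irreducible representations of $C_0(X)\rtimes_\lt G$ to induced ones.

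For part~(1), I would apply Theorem~\ref{thm-new-general} to the given pair $(z,V)$ to obtain a closed subgroup $H\subset S_z$ and an irreducible subrepresentation $R$ of $V|_H$ with
\[
M_U\big(\Ind_{S_z}^G(\pi_{z}\rtimes V)\big)=[V|_H:R]\leq \dim V.
\]
Because $V$ is a character, $\dim V=1$, so $V|_H$ is itself one-dimensional and hence irreducible; the only irreducible subrepresentation that can occur is $R=V|_H$, and $[V|_H:V|_H]=1$. (Alternatively, the upper bound $\dim V=1$ combined with $[V|_H:R]\geq 1$, which holds because $R$ is an irreducible subrepresentation of $V|_H$, pins the value down to $1$.)

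For part~(2), assume every stability subgroup is abelian. By Lemma~\ref{lem-induction}(\ref{lem-induction-item1}), every irreducible representation of $C_0(X)\rtimes_\lt G$ is unitarily equivalent to $\Ind_{S_z}^G(\pi_{z}\rtimes V)$ for some $z\in X$ and $V\in\widehat{S_z}$. Since $S_z$ is abelian, each such $V$ is one-dimensional and therefore a character of $S_z$. Part~(1) then gives $M_U\bigl(\Ind_{S_z}^G(\pi_{z}\rtimes V)\bigr)=1$. By \cite[Theorem~4.6]{A}, this is precisely the statement that $\Ind_{S_z}^G(\pi_{z}\rtimes V)$ satisfies Fell's condition. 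As this holds for every irreducible representation of $C_0(X)\rtimes_\lt G$, the crossed product is a Fell algebra by definition.

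Neither step presents a genuine obstacle: all the work is already packaged inside Theorem~\ref{thm-new-general} (the trace/Fell topology analysis of Propositions~\ref{prop-long-calculation}--\ref{ref-propn2}), and the only conceptual point is to note that one-dimensionality of $V$ forces the multiplicity $[V|_H:R]$ to collapse to $1$.
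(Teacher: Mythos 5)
Your proposal is correct and follows essentially the same route as the paper: part (1) is an immediate application of Theorem~\ref{thm-new-general} with the observation that $\dim V=1$ forces $[V|_H:R]=1$, and part (2) combines this with Lemma~\ref{lem-induction} and \cite[Theorem~4.6]{A} exactly as the authors do.
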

\begin{proof}
(1) By Theorem~\ref{thm-new-general} there exist a closed subgroup $H$ of $S_z$ and an irreducible subrepresentation $R$ of $V|_H$ such that
\[M_U\big(\Ind_{S_z}^G(\pi_{z}\rtimes V)\big)=\big[ V|_H:R \big].\]
The right-hand side is $1$ because $V$ has dimension $1$.

(2) It follows from (1) and Lemma~\ref{lem-induction} that every irreducible representation of $C_0(X)\rtimes_{\rm lt}G$ has upper multiplicity $1$. Hence $C_0(X)\rtimes_{\rm lt}G$ is a Fell algebra by \cite[Theorem 4.6]{A}.
\end{proof}

\section{Main results}
We now turn to the two situations described in the introduction: when $G$ has a principal stability subgroup or when  $X$ is  locally of finite $G$-orbit type.
Our main theorem is:

\begin{thm}\label{thm-pss} Let $(G,X)$ be a second-countable transformation group with $G$ compact. Let $z\in X$ and $V\in \hat S_z$. Suppose that
either
\begin{enumerate}
\item\label{thm-pss-item1}  $G$ contains a principal stability subgroup $S$ or
\item\label{thm-pss-item2}  $X$ is locally of finite $G$-orbit type.
\end{enumerate}
 Then there exists a closed subgroup $H$ of $G$ with $H\subset S_z$
such that
$$M_U\big(\Ind_{S_z}^G(\pi_{z}\rtimes V)\big)=\max_{R\in\hat H}\big[ V|_H:R \big]\leq \dim V<\infty.$$
Furthermore, in case \eqref{thm-pss-item1} the subgroup $H$ may be chosen conjugate to $S$.
\end{thm}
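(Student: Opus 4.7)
My plan is to refine Theorem~\ref{thm-new-general} using case-specific structural information, so as to strengthen the equation $M_U = [V|_H:R]$ for some $R$ into $M_U = \max_{R\in\hat H}[V|_H:R]$ for a suitable $H$. A preliminary step, which I would state as Claim~B, is that in case~\eqref{thm-pss-item1} every stabilizer $S_x$ contains a conjugate of $S$: given $x\in X$, density of principal orbits produces $y_n \to x$ with $S_{y_n} = g_n S g_n^{-1}$, compactness of $G$ yields a subsequence with $g_n \to g$, and the Hausdorff-continuity argument used in the final paragraph of Theorem~\ref{thm-new-general} forces $gSg^{-1} \subset S_x$.

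For $\max_R[V|_H:R] \ge M_U$, I apply Theorem~\ref{thm-new-general} to obtain $H_0 \subset S_z$, an irreducible subrepresentation $R_0$ of $V|_{H_0}$, and a sequence with $x_n \to z$ and $(S_{x_n},R_n) \to (H_0,R_0)$ realising $M_U = [V|_{H_0}:R_0]$. I then choose $H \subset H_0$ according to the case: in case~\eqref{thm-pss-item1}, Claim~B gives $h_n$ with $h_n S h_n^{-1} \subset S_{x_n}$, and after passing to a subsequence with $h_n \to h$ I set $H := hSh^{-1}$, a conjugate of $S$ lying inside $H_0$; in case~\eqref{thm-pss-item2}, the $x_n$ eventually lie in a $G$-invariant neighbourhood of finite orbit type, so by pigeonhole all $S_{x_n}$ may be assumed conjugate to a single $H_i$, making $H_0$ itself conjugate to $H_i$, and I take $H := H_0$. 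Decomposing $R_0|_H = \bigoplus_i m_i R_i$ into irreducibles of $\hat H$ and restricting $V|_{H_0} \supset [V|_{H_0}:R_0] \cdot R_0$ to $H$ gives $[V|_H:R_i] \ge [V|_{H_0}:R_0] = M_U$ for each $R_i$ appearing, whence $\max_R[V|_H:R] \ge M_U$.

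For the reverse inequality $\max_R[V|_H:R] \le M_U$, I must verify $[V|_H:R] \le M_U$ for every $R \in \hat H$. By Proposition~\ref{ref-propn2} it suffices to construct, for each such $R$, a sequence $(y_n,\widetilde R_n)$ with $y_n \to z$ and $(S_{y_n},\widetilde R_n) \to (H,R)$ in Fell's topology for subgroup representations. In case~\eqref{thm-pss-item2} the already-available sequence $x_n$ works: writing $S_{x_n} = g_n H_i g_n^{-1}$ with $g_n \to g$ and $H = gH_ig^{-1}$, I set $\widetilde R_n = (g_n g^{-1})\cdot R \in \widehat{S_{x_n}}$ and appeal to continuity of the $G$-action on $S(G)^\wedge$. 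In case~\eqref{thm-pss-item1} I use density of principal orbits to find $y_n \to z$ with $S_{y_n}$ conjugate to $S$, then exploit Lemma~\ref{lemma-D} to move within orbits so that, after passing to subsequences, $S_{y_n} \to H$, and transport $R$ by the same conjugation construction. The upper bound $\max_R[V|_H:R] \le \dim V < \infty$ is automatic, and in case~\eqref{thm-pss-item1} the subgroup $H = hSh^{-1}$ is manifestly conjugate to $S$.

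The main obstacle I anticipate is the last construction in case~\eqref{thm-pss-item1}: the density argument supplies $y_n \to z$ with $S_{y_n}$ conjugate to $S$, but by sequential compactness one only controls the conjugacy class of the limit, whereas what is needed is Fell convergence of $S_{y_n}$ to the \emph{specific} conjugate $H$ fixed in the previous step. Making this rigorous demands a careful interplay between Lemma~\ref{lemma-D} (for moving within orbits without changing the induced representation), the continuity of the $G$-action on $S(G)^\wedge$ (for transporting representations), and sequential compactness in $\Sigma(G)$ (for extracting convergent subsequences of conjugating elements).
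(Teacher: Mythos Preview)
Your proposal is correct in case~\eqref{thm-pss-item2} and is close in spirit to the paper's argument there, though you avoid the paper's Claim~B manoeuvre (making the stabilisers constant) by instead transporting representations along the conjugating elements $g_n g^{-1}$; this is a legitimate variant.

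In case~\eqref{thm-pss-item1}, however, there is a genuine gap, and it is exactly the obstacle you flag at the end. Your forward direction fixes $H=hSh^{-1}\subset H_0$, where $H_0$ arises from the sequence $x_n$ supplied by (the proof of) Theorem~\ref{thm-new-general}. For the reverse inequality you need, for each $R\in\hat H$, a sequence $y_n\to z$ with $(S_{y_n},\widetilde R_n)\to(H,R)$. Density of principal orbits gives $y_n\to z$ with $S_{y_n}=k_nSk_n^{-1}$, and after the Claim~B trick one obtains $S_{y_n}=k^{-1}Sk=:H''$ constant; but $H''$ need not equal $H$, and moving within orbits by a convergent $g_n\to g$ with $g\ne e$ destroys $y_n\to z$. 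Since two distinct conjugates of $S$ inside $S_z$ need not be $S_z$-conjugate, there is no reason for $\max_{R\in\hat H}[V|_H:R]$ and $\max_{R''\in\hat{H''}}[V|_{H''}:R'']$ to coincide. Thus your two inequalities are proved for potentially different subgroups and cannot be combined.

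The missing idea, which the paper supplies, is to arrange \emph{from the outset} that the $M_U$-realising sequence lives in $\operatorname{Ind} Y$, the dense subset of the spectrum corresponding to principal orbits. This is done via Lemma~\ref{dense-in-spectrum} together with \cite[Lemma~1.2]{AK}, which allows the sequence in the definition of $M_U$ to be chosen from any dense subset. With $x_n\in Y$, the stabilisers $S_{x_n}$ are themselves conjugates of $S$, so after the Claim~B manoeuvre one obtains $S_{x_n}=H$ constant, a conjugate of $S$; in your language, $H_0$ is already conjugate to $S$ and one may take $H=H_0$. Now a single sequence serves both directions: Proposition~\ref{ref-propn2} with the constant pair $(H,R)$ gives $M_U\ge[V|_H:R]$ for every $R\in\hat H$, while a pigeonhole argument on the representations $Q_n\in\hat H$ (the paper's Claim~C, using Lemma~\ref{lem-park} and Frobenius reciprocity) produces a specific $Q$ with $M(\rho,(\rho_n))=[V|_H:Q]$. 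This is precisely the step your approach cannot reach without first confining the realising sequence to $\operatorname{Ind} Y$.
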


Before we can prove Theorem~\ref{thm-pss}, we need several lemmas.

Let $\pi\rtimes U\in (C_0(X)\rtimes_\lt G)^\wedge$. As shown in the proof of Lemma~\ref{lem-induction}, there exists $x_{(\pi,U)}\in X$ such that $\ker\pi$ consists of the functions vanishing on the orbit $G\cdot x_{(\pi, U)}$.
Since $G$ is compact, the orbits in $X$ are closed. Furthermore, the stability subgroups of $G$ are compact and hence amenable and liminal. Thus $C_0(X)\rtimes_\lt G$ is liminal by \cite[Theorem~3.1]{W2} and so $\Prim(C_0(X)\rtimes_\lt G)$ and $(C_0(X)\rtimes_\lt G)^\wedge$ are homeomorphic.  Let $q:X\to X/G$ be the quotient map. It now follows that the function $\Phi$ of \cite[Theorem~4.8]{GL} from $\Prim(C_0(X)\rtimes_\lt G)$ to the T${}_0$-isation $(X/G)^\sim$ of $X/G$ can be viewed as a function $\Phi:(C_0(X)\rtimes_\lt G)^\wedge \to X/G$, and then $\Phi(\pi\rtimes U)=q(x_{(\pi, U)})$. In particular,
\begin{equation*}\label{eq-Phi} \Phi(\Ind_{S_z}^G(\pi_z\rtimes V))=\Phi(\tilde\pi_z\rtimes\Ind^G_{S_z}V)=q(z),
\end{equation*}
since $\ker(\tilde\pi_z)=\{f\in C_0(X): f(G\cdot z)=\{0\}\}$.
Since $G$ is amenable it follows from \cite[Theorem~4.8]{GL} that  $\Phi$  is open.

\begin{lemma}\label{dense-in-spectrum} Let $(G,X)$ be a second-countable transformation group with $G$ compact. Suppose that $Y$ is a dense subset of $X$.
Then \[\Ind Y:=\{ \Ind_{S_y}^G(\pi_{y}\rtimes U) :y\in Y, U\in \hat S_y \}\] is dense in $(C_0(X)\rtimes_\lt G)^\wedge$.
\end{lemma}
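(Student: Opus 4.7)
The plan is to use the open, continuous surjection $\Phi:(C_0(X)\rtimes_\lt G)^\wedge\to X/G$ of \cite[Theorem~4.8]{GL} discussed just before the lemma, together with Lemmas~\ref{lem-induction} and~\ref{lemma-D}, to reduce density in the spectrum to density of $Y$ in $X$.

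First I would observe that $q(Y)$ is dense in $X/G$: since $q:X\to X/G$ is continuous, $q(X)=q(\overline{Y})\subset \overline{q(Y)}$, and $q$ is surjective, so $\overline{q(Y)}=X/G$. Next, let $\mathcal{O}$ be a non-empty open subset of $(C_0(X)\rtimes_\lt G)^\wedge$; I will show $\mathcal{O}\cap \Ind Y\neq \emptyset$. Because $\Phi$ is open, $\Phi(\mathcal{O})$ is a non-empty open subset of $X/G$, so by density of $q(Y)$ there exists $y\in Y$ with $q(y)\in\Phi(\mathcal{O})$. Pick $\sigma\in \mathcal{O}$ with $\Phi(\sigma)=q(y)$.

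By Lemma~\ref{lem-induction}, $\sigma\simeq \Ind_{S_x}^G(\pi_x\rtimes V)$ for some $x\in X$ and $V\in\hat S_x$, and by the explicit formula for $\Phi$ recorded just before this lemma, $q(x)=\Phi(\sigma)=q(y)$. Hence $x=g\cdot y$ for some $g\in G$, so Lemma~\ref{lemma-D} (applied with roles $y$ and $g\cdot y=x$) yields
\[\sigma\simeq \Ind_{S_x}^G(\pi_x\rtimes V)\simeq \Ind_{S_y}^G\bigl(\pi_y\rtimes (g^{-1}\cdot V)\bigr),\]
with $g^{-1}\cdot V\in \hat S_y$. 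Therefore $\sigma\in \Ind Y\cap\mathcal{O}$, which proves the density.

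There is no real obstacle here: the openness of $\Phi$, already imported from \cite{GL} in the paragraph preceding the lemma, does all the work, and Lemma~\ref{lemma-D} is exactly what is needed to move the induced representation from a point in the orbit of $y$ back to $y$ itself so that it genuinely lies in $\Ind Y$.
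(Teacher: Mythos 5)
Your argument is correct and is essentially the paper's own proof: both rest on the openness of $\Phi$ (equivalently, the openness of $q^{-1}(\Phi(\mathcal{O}))$ in $X$), Lemma~\ref{lem-induction} to write the chosen representation as an induced one, and Lemma~\ref{lemma-D} to translate it within the orbit so that it lands in $\Ind Y$. The only cosmetic difference is that you push $Y$ forward to a dense subset of $X/G$ while the paper pulls $\Phi(\mathcal{O})$ back to an open subset of $X$; these are interchangeable.
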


\begin{proof}
Let $O$ be a non-empty open subset of $(C_0(X)\rtimes_\lt G)^\wedge$. Then $q^{-1}(\Phi(O))$ is non-empty and open in $X$. Since $Y$ is dense in $X$, there exists $y\in q^{-1}(\Phi(O))\cap Y$. Now $q(y)=\Phi(\rho)$ for some $\rho\in O$.  By Lemma~\ref{lem-induction}, $\rho=\Ind_{S_z}^G(\pi_{z}\rtimes V)$ for some $z\in X$ and $V\in \hat S_z$.  Then $q(y)=\Phi(\rho)=q(z)$ and so there exists $g\in G$ such that $y=g\cdot z$. By Lemma~\ref{lemma-D}, $\rho=\Ind_{S_y}(\pi_y\rtimes g\cdot V)\in \Ind Y$ and so $O\cap \Ind Y\neq \emptyset$, as required.
\end{proof}

\begin{lemma}\label{lem-park}
Let $(G,X)$ be a second-countable transformation group with $G$ compact.  Let $z, x_n\in X$ for $n\geq 1$, $V\in \hat S_z$, $Q_n\in \hat S_{x_n}$, and set
\[
\rho:=\Ind_{S_z}^G(\pi_{z}\rtimes V)\quad\text{and}\quad \rho_n:=\Ind_{S_{x_n}}^G(\pi_{x_n}\rtimes Q_n).
\]
 Suppose that $\rho_n\to \rho$ in $(C_0(X)\rtimes_\lt G)^\wedge$. Then $\ker \Ind_{S_{x_n}}^G Q_n\to \ker\Ind_{S_z}^G V$ in $\Id C^*(G)$.
\end{lemma}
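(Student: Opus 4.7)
The plan is to combine Lemma~\ref{lemma-claim} with Fell's continuity of induction, Frobenius reciprocity, and a hereditary property of the $\tau_w$-topology.

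First, I would invoke Lemma~\ref{lemma-claim} (applicable because compact group actions are proper): after passing to a subsequence and replacing each $x_n$ by $g_n\cdot x_n$ and $Q_n$ by $g_n\cdot Q_n$ for suitable $g_n\in G$, I may assume that $x_n\to z$ in $X$ and $(S_{x_n},Q_n)\to (H,R)$ in Fell's topology on $S(G)^\wedge$, where $H\subset S_z$ and $R$ is an irreducible subrepresentation of $V|_H$. The orbit-move does not affect $\ker\Ind_{S_{x_n}}^G Q_n$ in $\Id C^*(G)$, because conjugation-invariance of Mackey induction gives $\Ind_{gS_{x_n}g^{-1}}^G(g\cdot Q_n)\simeq \Ind_{S_{x_n}}^G Q_n$. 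Since $C^*(G)$ is separable, $\tau_w$ on $\Id C^*(G)$ is second countable and hence sequential, so the required convergence for the original sequence will follow once I verify it for every such subsequence extracted from every subsequence of $(\rho_n)$.

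Next, I would appeal to Fell's continuity theorem for induction from \cite{FellII}: the assignment $(K,W)\mapsto \ker\Ind_K^G W$ is continuous from $S(G)^\wedge$ with Fell's topology to $\Id C^*(G)$ with $\tau_w$. Applied along the subsequence, this yields $\ker\Ind_{S_{x_n}}^G Q_n\to \ker\Ind_H^G R$ in $\Id C^*(G)$. I would then check the kernel containment $\ker\Ind_H^G R\subset \ker\Ind_{S_z}^G V$: by Frobenius reciprocity for the compact group $S_z$, $[V:\Ind_H^{S_z} R]=[V|_H:R]\geq 1$, so $V$ is a subrepresentation of $\Ind_H^{S_z} R$; since induction preserves subrepresentations, induction in stages gives $\Ind_{S_z}^G V\subset \Ind_{S_z}^G\Ind_H^{S_z}R=\Ind_H^G R$, and reversing inclusions yields the desired containment of kernels.

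Finally, the basic $\tau_w$-open sets $U_F=\{I:I\not\supset J\text{ for all }J\in F\}$ are hereditary under ideal inclusion: $I'\in U_F$ and $I\subset I'$ force $I\in U_F$, since otherwise some $J\in F$ would satisfy $J\subset I\subset I'$, contradicting $I'\in U_F$. Consequently, a sequence converging to an ideal $I$ in $\tau_w$ also converges to every ideal $I'\supset I$, and combining this with the two previous steps delivers $\ker\Ind_{S_{x_n}}^G Q_n\to \ker\Ind_{S_z}^G V$ in $\Id C^*(G)$. The main obstacle is arranging the hypotheses of Fell's continuity theorem in the required form; Lemma~\ref{lemma-claim}, via Neumann's parametrisation of the spectrum, handles exactly that, and the remaining manipulations with induction in stages, Frobenius reciprocity, and the structure of $\tau_w$ are routine.
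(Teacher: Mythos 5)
Your argument is correct, but it takes a genuinely different route from the paper's. The paper proves the lemma directly and self-containedly: it extends $\rho$ and $\rho_n$ to the multiplier algebra $M(C_0(X)\rtimes_\lt G)$, uses the canonical homomorphism $i_G\colon C^*(G)\to M(C_0(X)\rtimes_\lt G)$ (so that $\overline{\rho}\circ i_G=\Ind_{S_z}^G V$ and likewise for $\rho_n$), and then verifies $\tau_w$-convergence on a basic open set $\{K: K\not\supset I_j,\ 1\le j\le k\}$ in one stroke by passing to the product $K_1K_2\cdots K_k$ of the generated ideals and invoking primeness of $\ker\overline{\rho}$ together with $\overline{\rho}_n\to\overline{\rho}$. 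Your proof instead routes through Lemma~\ref{lemma-claim} (hence Neumann's parametrisation of $(C_0(X)\rtimes_\lt G)^\wedge$), Fell's continuity of the inducing process from \cite{FellII}, Frobenius reciprocity, induction in stages, and the observation that the basic $\tau_w$-open sets are downward hereditary so that convergence to $\ker\Ind_H^G R$ entails convergence to the larger ideal $\ker\Ind_{S_z}^G V$. All of these steps check out: the orbit moves do not change the kernels since conjugate pairs induce to equivalent representations; the subsequence principle you invoke holds in any topological space (the appeal to second countability is unnecessary but harmless); the containment $\ker\Ind_H^G R\subset\ker\Ind_{S_z}^G V$ follows exactly as you say because $[V|_H:R]\ge1$ makes $V$ a subrepresentation of $\Ind_H^{S_z}R$; and the paper's own remark that $\tau_w$ induces the inner hull-kernel topology on $\Rep$ means Fell's continuity theorem delivers precisely the convergence of kernels you need. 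What the paper's approach buys is independence from the heavier inputs (Neumann's thesis and Fell's continuity theorem) and hence a shorter chain of dependencies; what yours buys is more information, namely identification of an actual $\tau_w$-limit $\ker\Ind_H^G R$ with $H\subset S_z$ and $R$ a subrepresentation of $V|_H$, of which the stated conclusion is a weakening. One point worth making explicit if you write this up: since the pair $(H,R)$ produced by Lemma~\ref{lemma-claim} may vary with the subsequence, you should note that this does not matter because the final target $\ker\Ind_{S_z}^G V$ is the same in every case.
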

\begin{proof}
Let $\overline{\rho}$ and $\overline{\rho}_n$ be the unique extensions of $\rho$ and $\rho_n$ (respectively) in $(M(C_0(X)\rtimes_\lt G))^{\wedge}$. Then $\overline{\rho}_n\to \overline{\rho}$.
Let $i_G: C^*(G)\to M(C_0(X)\rtimes_\lt G)$ be the canonical homomorphism. Then
$\overline{\rho}\circ i_G=\Ind^G_{S_z}V$ and $\overline{\rho}_n\circ i_G=\Ind^G_{S_{x_n}}Q_n$ for all $n$.
So it suffices to show that $\ker(\overline{\rho}_n\circ i_G) \to \ker(\overline{\rho}\circ i_G)$ in $\Id C^*(G)$.

Let $W$ be a basic open neighbourhood of $\ker(\overline{\rho}\circ i_G)$ in $\Id C^*(G)$. Then there exist $k\in\NN$ and $I_1,\ldots,I_k\in\Id C^*(G)$ such that $W =\{K\in\Id C^*(G): K\not\supset I_j \text{\, for\, } 1\leq j\leq k\}$. Let $K_j$ be the closed two-sided ideal of $M(C_0(X)\rtimes_\lt G)$ generated by $i_G(I_j)$. Since $\overline{\rho}(i_G(I_j))\neq\{0\}$ we have $\overline{\rho}(K_j)\neq\{0\}$.  Since $\overline{\rho}$ is irreducible $\ker\overline{\rho}$ is prime, and hence  $\overline{\rho}(K_1K_2\ldots K_k)\neq\{0\}$.
Since $\overline{\rho}_n\to \overline{\rho}$, there exists $N\in\NN$ such that $n\geq N$ implies
$\overline{\rho}_n(K_1K_2\ldots K_k)\neq\{0\}$.
Thus for $n\geq N$ and $1\leq j\leq k$, we have $\overline{\rho}_n(i_G(I_j))\neq\{0\}$
and hence $\ker(\overline{\rho}_n\circ i_G) \in W$.
\end{proof}

\begin{proof}[Proof of Theorem~\ref{thm-pss}] Set $\rho:=\Ind_{S_z}^G(\pi_{z}\rtimes V)$.

\medskip

\noindent Case~\eqref{thm-pss-item1}. Suppose that $S$ is a principal stability subgroup of $G$.  Then \[Y:=\{x\in X: S_x \mbox{ is conjugate to } S\}\] is a dense $G$-invariant subset of $X$. By Lemma~\ref{dense-in-spectrum},
\[\Ind Y:=\{ \Ind_{S_y}^G(\pi_{y}\rtimes U) :y\in Y, U\in \hat S_y \}\] is dense in $(C_0(X)\rtimes_\lt G)^\wedge$.

By \cite[Lemma~1.2]{AK} there exists a sequence $(\rho_n)$ in the dense subset $\Ind Y$ such that the upper multiplicity $M_U(\rho)$  equals the common upper and lower multiplicity $M(\rho, (\rho_n))$ of $\rho$ relative to the sequence $(\rho_n)$. We note again,  that if $(\rho_{n_k})$ is a subsequence of $(\rho_n)$, then  $M(\rho, (\rho_{n_k}))=M(\rho, (\rho_n))$ and that the multiplicity numbers depend only  on the class of the irreducible representation.  Since each $\rho_n\in\Ind Y$, there exist $x_n\in Y$ and $Q_n\in\hat S_{x_n}$ such that $\rho_n=\Ind_{S_{x_n}}^G(\pi_{x_n}\rtimes Q_n)$.

\begin{claimA*}
By passing to a subsequence and moving within orbits, we may assume that $x_n\to z$.
\end{claimA*}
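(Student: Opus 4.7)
The plan is to establish Claim A in three steps: (i) show that the orbits $q(x_n)$ converge to $q(z)$ in $X/G$; (ii) lift this convergence to $X$ itself by a diagonal argument using second countability of $X$ and openness of the quotient map $q$; and (iii) apply Lemma~\ref{lemma-D} to replace each $x_{n_k}$ by a $G$-translate lying near $z$ without changing the equivalence class of $\rho_{n_k}$.

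For step (i), since $\Phi(\rho_n)=q(x_n)$ and $\Phi(\rho)=q(z)$, I would invoke continuity of $\Phi$ at $\rho$. This is standard in the crossed-product setting: for any $f\in C_0(X)$ with $f(z)\neq 0$ one has $\tilde\pi_z(f)\neq 0$, so $\rho(f)\neq 0$, hence $\rho_n(f)\neq 0$ eventually (since $\{\sigma:\sigma(f)\neq 0\}$ is open in the spectrum), which forces $f$ to be non-vanishing somewhere on $G\cdot x_n$. Equivalently, the orbit map $(C_0(X)\rtimes_\lt G)^\wedge\to X/G$ is continuous. Thus $q(x_n)\to q(z)$ in $X/G$.

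For step (ii), since $X$ is second countable, I fix a decreasing countable neighbourhood base $(U_k)_{k\in\NN}$ of $z$ in $X$. The quotient map $q$ is open because $q^{-1}(q(U))=G\cdot U$ is a union of open translates, so each $q(U_k)$ is an open neighbourhood of $q(z)$. From $q(x_n)\to q(z)$ I obtain, for each $k$, an integer $N_k$ such that $n\geq N_k$ implies there exists $g_n^{(k)}\in G$ with $g_n^{(k)}\cdot x_n\in U_k$. A diagonal choice of strictly increasing $n_k\geq N_k$ together with $g_k:=g_{n_k}^{(k)}$ then yields $g_k\cdot x_{n_k}\in U_k$, so $g_k\cdot x_{n_k}\to z$.

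For step (iii), by Lemma~\ref{lemma-D} we have $\rho_{n_k}\simeq \Ind_{S_{g_k\cdot x_{n_k}}}^G(\pi_{g_k\cdot x_{n_k}}\rtimes (g_k\cdot Q_{n_k}))$, and $g_k\cdot x_{n_k}\in Y$ by the $G$-invariance of $Y$. Replacing $(x_{n_k}, Q_{n_k})$ by $(g_k\cdot x_{n_k}, g_k\cdot Q_{n_k})$ and relabelling produces a subsequence of the required form with $x_n\to z$. Since (as noted just before the claim) multiplicity numbers depend only on the unitary equivalence class of an irreducible representation and are preserved under passage to subsequences, the relation $M_U(\rho)=M(\rho,(\rho_n))$ survives the replacement. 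The main obstacle is really only step (ii): one must combine openness of $q$ with second countability to lift a convergent sequence from $X/G$ to a convergent sequence in $X$ after suitable $G$-translations; the other two steps are essentially bookkeeping.
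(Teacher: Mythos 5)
Your proposal is correct and follows essentially the same route as the paper: convergence $q(x_n)\to q(z)$ via the map $\Phi$, lifting through the open quotient map $q$ to get a subsequence with $g_k\cdot x_{n_k}\to z$, and Lemma~\ref{lemma-D} to move within orbits while preserving the relative multiplicity. The only difference is that you spell out the continuity of $\Phi$ and the sequence-lifting lemma, which the paper takes as known (citing Gootman--Lazar for $\Phi$ and using openness of $q$ directly).
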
\label{ClaimA}
To see this, let $\Phi: (C_0(X)\rtimes_\lt G)^\wedge\to X/G$ be the function defined at \eqref{eq-Phi}. Then $q(x_n)=\Phi(\rho_n)\to \Phi(\rho)=q(z)$. Since $q$ is open, there is a subsequence $(x_{n_k})_{k\geq1}$ and a sequence $(g_k)$ in $G$
 such that $y_k:=g_k\cdot x_{n_k}\to z$.  Each $y_k$ is in $Y$ because $Y$ is $G$-invariant.  By
Lemma~\ref{lemma-D},
\[
\rho_{n_k}\simeq  \Ind_{S_{y_k}}^G(\pi_{y_k}\rtimes (g_k\cdot Q_{n_k})).
\]
Thus by replacing $x_{n_k}$ by $y_k$ and $Q_{n_k}$ by $g_k\cdot Q_{n_k}$, and relabeling, we may assume that $M_U(\rho)=M(\rho,(\rho_n))$, $x_n\in Y$ and $x_n\to z$. This completes the proof of Claim~A.

\begin{claimB*}
By passing to a subsequence and moving within orbits, we may assume that there is a closed subgroup $H$ of $G$ contained in $S_z$ such that $S_{x_n}=H$ for all $n\geq1$.
\end{claimB*}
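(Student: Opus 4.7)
The plan is to exploit sequential compactness of $\Sigma(G)$ together with the compactness of $G$ to first make the stability subgroups converge in $\Sigma(G)$, then conjugate them all to a common limit group, and finally correct for the drift caused by conjugation in order to keep $x_n \to z$.

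First I would pass to a subsequence (using that $\Sigma(G)$ is sequentially compact) so that $S_{x_n}\to H'$ in $\Sigma(G)$ for some closed subgroup $H'$ of $G$. Since the map $g\mapsto gSg^{-1}$ from $G$ to $\Sigma(G)$ is continuous and $G$ is compact, the $G$-conjugacy class of $S$ is a closed subset of $\Sigma(G)$. As each $S_{x_n}$ lies in this conjugacy class (because $x_n\in Y$), so does the limit $H'$, and in particular each $S_{x_n}$ is conjugate to $H'$. Choose, for each $n$, an element $g_n\in G$ with $g_n S_{x_n} g_n^{-1}=H'$.

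Next, by compactness of $G$, pass to a further subsequence so that $g_n\to g$ for some $g\in G$, and put $h_n:=g^{-1}g_n$, so $h_n\to e$. Set $z_n:=h_n\cdot x_n$. Then $z_n\to g^{-1}\cdot(g\cdot z)=z$ and
\[
S_{z_n}=h_n S_{x_n} h_n^{-1}=g^{-1}H' g=:H,
\]
which is independent of $n$ and conjugate to $S$. The inclusion $H\subset S_z$ follows from the continuity of the action: for $s\in H$ we have $s\cdot z_n=z_n\to z$ and $s\cdot z_n\to s\cdot z$, so $s\cdot z=z$.

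Finally, by Lemma~\ref{lemma-D}, $\rho_n\simeq \Ind_H^G(\pi_{z_n}\rtimes(h_n\cdot Q_n))$, so replacing $x_n$ by $z_n$ and $Q_n$ by $h_n\cdot Q_n$ (and relabelling) preserves the class of $\rho_n$ and hence, by the remark already recorded after the choice of $(\rho_n)$, the relative multiplicity $M(\rho,(\rho_n))=M_U(\rho)$. The main potential obstacle is that a naive choice of conjugating elements $g_n$ need not yield a sequence converging to $z$; the key trick is to absorb the limit $g=\lim g_n$ by passing from $g_n$ to the corrector $h_n=g^{-1}g_n$, which simultaneously makes the stabilisers constant and preserves the convergence $x_n\to z$.
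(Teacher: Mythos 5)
Your argument is correct and is essentially the paper's own proof: both hinge on choosing conjugating elements, passing to a convergent subsequence $g_n\to g$ by compactness of $G$, and using the corrector $h_n=g^{-1}g_n\to e$ so that the stabilisers become constant while the points still converge to $z$, then invoking Lemma~\ref{lemma-D}. Your preliminary detour through sequential compactness of $\Sigma(G)$ and closedness of the conjugacy class is harmless but unnecessary, since each $S_{x_n}$ is already conjugate to $S$ by the definition of $Y$ and you could conjugate directly onto $S$ as the paper does.
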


Since $S$ is principal and each $x_n\in Y$, there exists $r_n\in G$ such that $S_{x_n}=r_n^{-1}S r_n$ for $n\geq1$. Since $G$ is compact, by passing to a subsequence we may assume that $r_n\to r$ in $G$. Then $u_n:=r^{-1} r_n\to e$ and  $y_n:=u_n\cdot x_n\to z$. Now
\[S_{y_n}=u_nS_{x_n}u_n^{-1}=r^{-1} r_nS_{x_n}r_n^{-1} r=r^{-1}Sr,\]
which is constant. Set $H=r^{-1}Sr$. For $h\in H$ we have $h\cdot y_n=y_n$ for all $n$, and hence $h\cdot z=z$. Thus $H\subset S_z$.
By  replacing $x_n\to z$ by $y_n\to z$ and invoking Lemma~\ref{lemma-D}, we may assume $S_{x_n}=H$. This completes the proof of Claim~B.

Let $R$ be any irreducible subrepresentation of $V|_H$.  By Proposition~\ref{ref-propn2}, applied with the constant sequence $(H,R)$, we obtain \begin{equation}\label{eq-max}M_U(\rho)\geq \big[ V|_H:R \big].\end{equation} It remains to show that there exists an irreducible subrepresentation $Q$ of $V|_H$ such that $M_U(\rho)=\big[ V|_H:Q \big]$.

\begin{claimC*} By passing to a subsequence, we may assume that there is an irreducible representation $Q$ of $H$ such that $Q_n=Q$ for all $n$.
\end{claimC*}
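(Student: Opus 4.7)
The plan is to push the convergence $\rho_n\to\rho$ down to the level of ideals in $C^*(G)$, and then exploit Peter--Weyl together with Frobenius reciprocity to confine $Q_n$ to a finite subset of $\hat H$; the pigeonhole principle then produces the required subsequence.

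First I would record that $\rho_n\to\rho$ in $(C_0(X)\rtimes_\lt G)^\wedge$ (implicit in the choice of $(\rho_n)$ via \cite[Lemma~1.2]{AK}, since the lower relative multiplicity is non-zero), and apply Lemma~\ref{lem-park} to obtain
\[\ker(\Ind_H^G Q_n)\longrightarrow \ker(\Ind_{S_z}^G V)\quad\text{in }\Id C^*(G).\]
Because $G$ is compact, $C^*(G)$ is a $c_0$-direct sum $\bigoplus_{\sigma\in\hat G}M_{d_\sigma}$ of matrix algebras, so every ideal of $C^*(G)$ is determined by a subset of $\hat G$. Writing $J_\sigma$ for the minimal ideal associated with $\sigma\in\hat G$, a non-degenerate representation $\tau$ of $C^*(G)$ satisfies $\ker\tau\not\supset J_\sigma$ if and only if $\sigma$ appears in the decomposition of $\tau$.

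Next, pick any $\sigma_0\in\hat G$ appearing in the non-zero representation $\Ind_{S_z}^G V$, so that $\ker(\Ind_{S_z}^G V)\not\supset J_{\sigma_0}$. The definition of $\tau_w$, applied to the basic open neighbourhood $\{I:I\not\supset J_{\sigma_0}\}$ of $\ker(\Ind_{S_z}^G V)$, produces $N\in\NN$ such that $\sigma_0\in\supp(\Ind_H^G Q_n)$ for all $n\geq N$. Frobenius reciprocity (as used in Proposition~\ref{prop-sum-traces}) then rewrites this as $[\sigma_0|_H:Q_n]\geq 1$, so $Q_n$ is an irreducible summand of $\sigma_0|_H$ for every $n\geq N$.

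Since $\sigma_0$ is finite-dimensional (Peter--Weyl), $\sigma_0|_H$ has only finitely many distinct irreducible summands in $\hat H$, so $\{Q_n:n\geq N\}$ is a finite subset of $\hat H$. By the pigeonhole principle some $Q\in\hat H$ equals $Q_n$ for infinitely many $n$, and passing to this subsequence (using the observation recorded earlier in the proof that $M(\rho,(\rho_{n_k}))=M(\rho,(\rho_n))=M_U(\rho)$ is unchanged under subsequences and depends only on equivalence classes) yields $Q_n=Q$ for all $n$, as required. The main point of the argument is the finiteness implicit in the compactness of $G$: each $\sigma\in\hat G$ is finite-dimensional, so finiteness transfers from $\hat G$ to a subset of $\hat H$; without this the pigeonhole step would be unavailable, since $\hat H$ itself can be infinite.
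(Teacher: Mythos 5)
Your argument is correct and follows essentially the same route as the paper: Lemma~\ref{lem-park} to transfer the convergence to $\Id C^*(G)$, then Frobenius reciprocity to place each $Q_n$ inside the restriction to $H$ of a fixed finite-dimensional irreducible of $G$, and finally pigeonhole on the finitely many summands. The only divergence is in how you establish that a fixed $\sigma_0\in\hat G$ occurring in $\Ind_{S_z}^G V$ eventually occurs in $\Ind_H^G Q_n$: the paper first rewrites $\Ind_H^G Q_n\simeq\Ind_{S_z}^G(\Ind_H^{S_z}Q_n)$ by induction in stages and then cites \cite[Lemma~4.8]{EH}, whereas you argue directly from the block decomposition of $C^*(G)$ as a $c_0$-sum of matrix algebras and the definition of the basic $\tau_w$-open set $\{I:I\not\supset J_{\sigma_0}\}$. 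Your version is more self-contained (it avoids the external lemma and the induction-in-stages detour) at the cost of being special to compact $G$, which is all that is needed here; both yield the same intermediate statement, and your closing remark that equivalence classes suffice because the relative multiplicities are unchanged under subsequences and under unitary equivalence (via Lemma~\ref{lem-induction}) matches the paper's final step.
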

Since $\rho_n\to\rho$ in $(C_0(X)\rtimes_\lt G)^\wedge$ and $S_{x_n}=H$, we have $\ker\Ind_H^G Q_n\to \ker\Ind_{S_z}^G V$ in $\Id C^*(G)$ by Lemma~\ref{lem-park}.  Since $H\subset S_z$, induction in stages gives \[\Ind_H^G Q_n\simeq \Ind_{S_z}^G\big( \Ind_H^{S_z} Q_n \big).\]
Writing $V_n:=\Ind_H^{S_z} Q_n $, we have $\ker \Ind_{S_z}^G V_n\to \ker \Ind_{S_z}^G V$ in $\Id C^*(G)$.
Let $R$ be an irreducible sub-representation of $\Ind_{S_z}^G V$.  Thus $\ker \Ind_{S_z}^G V\subset\ker R$, and it follows that $\ker\Ind_{S_z}^G V_n\to \ker R$.
By \cite[Lemma 4.8]{EH}, there exists $N$ such that, for all $n\geq N$, $R$ is equivalent to a sub-representation of $\Ind_{S_z}^G V_n\simeq \Ind_H^G Q_n$.
Since $Q_n$ is irreducible, by Frobenius Reciprocity $Q_n$ is equivalent to a sub-representation of $R|_H$ for all $n\geq N$. But $R$ is an irreducible representation of the compact group $G$, hence is finite dimensional.  It follows that $R|_H$ has finite support in $\hat H$ and so there exists $[Q]$ in $\hat H$ such that the class $[Q_n]$ of $Q_n$ is frequently equal to $[Q]$. By Lemma~\ref{lem-induction}, $Q_n\simeq Q$ implies $\rho_n\simeq\Ind_H^G(\pi_{x_n}\rtimes Q)$. So by passing to a subsequence we may assume that $Q_n=Q$ for all $n$.  This completes the proof of Claim~C.

Now Proposition~\ref{prop-traces-converging-new-new} applies with the constant sequence $(S_{x_n}, Q_n)= (H,Q)$, and so for all positive $a$ in the dense subalgebra $C(G, C_c(X))$ of $C_0(X)\rtimes_\lt G$,
\begin{equation*}\label{eq-cor-traces2}
\tr(\rho_n(a))
\to
\sum _{W\in \hat S_z,\ [W|_H:Q]\neq 0}[W|_H:Q]\,\tr \big(\Ind_{S_z}^G(\pi_{z,W}\rtimes W) (a)\big).
\end{equation*}
By Lemma~\ref{lem-induction}, $\Ind_{S_z}^G(\pi_{z,W'}\rtimes W') \simeq  \Ind_{S_z}^G(\pi_{z,W}\rtimes W)$ implies $W\simeq  W'$. Thus
Theorem~4.1 of \cite{ASS} implies that the set of limits of $(\rho_n)$ is \[\{\Ind_{S_z}^G(\pi_{z,W}\rtimes W) :W\in \hat S_z, [W|_H:Q]\neq 0\}\]
and that for each such limit, $M\big(\Ind_{S_z}^G(\pi_{z,W}\rtimes W), (\rho_n)\big)=[W|_H:Q]$.
Applying this to the limit $\rho=\Ind_{S_z}^G(\pi_{z}\rtimes V) $ gives
 $M_U(\rho)=M(\rho,(\rho_n))=[V|_H:Q]$ as required. Combining this with \eqref{eq-max} gives  $M_U(\rho)=\max_{R\in\hat H}\big[ V|_H:R \big]$. This completes the proof of case \eqref{thm-pss-item1}.

\medskip

\noindent Case~\eqref{thm-pss-item2}.  Suppose that $X$ is locally of finite $G$-orbit type. Then there exist a $G$-invariant open neighbourhood $U$ of $z$ in $X$, $k\in \NN$ and closed subgroups $H_1, \ldots, H_k$ of $G$ such that, for all $x\in U$, $S_x$ is conjugate to one of the $H_j$.

 By \cite[Lemma~1.2]{AK}, there exists a sequence $\rho_n\in (C_0(X)\rtimes_\lt G)^{\wedge}$ such that $M_U(\rho)=M(\rho, (\rho_n))$.  Arguing as in Claim~A above, we may assume that $\rho_n=\Ind_{S_{x_n}}^G(\pi_{x_n}\rtimes Q_n)$ where $Q_n\in {\hat S}_{x_n}$  for $n\geq 1$ and $x_n\to z$ in $X$. (In case~\eqref{thm-pss-item1} we had the $\rho_n$ in a distinguished dense subset of $(C_0(X)\rtimes_\lt G)^{\wedge}$ and needed to keep them there under the Claim~A manoeuvre - this is the only difference.)

Eventually $x_n\in U$ and  there exists $j\in\{1,\ldots,k\}$ such that $S_{x_n}$ is frequently conjugate to $H_j$. By passing to a subsequence, we may assume that there exists $j$ such that $S_{x_n}$ is conjugate to $H_j$ for $n\geq 1$. By passing to a further subsequence and moving within orbits as in Claim~B above, we may assume that there is a closed subgroup $H$ of $G$ contained in $S_z$ such that $S_{x_n}=H$ for all $n\geq1$.  Let $R$ be any irreducible sub-representation of $V|_H$. By Proposition~\ref{ref-propn2}, applied with the constant sequence $(H,R)$, we obtain
\[
M_U(\rho)\geq \big[ V|_H:R \big]
\]
as in~\eqref{eq-max} in  case~\eqref{thm-pss-item1}. The remainder of the proof follows as in case~\eqref{thm-pss-item1}.
\end{proof}

\begin{cor}\label{cor-1}  Let $(G,X)$ be a second-countable transformation group with $G$ compact and with a principal stability group $S$. Let $z\in X$, suppose that $S_z$ is conjugate to $S$ and let $V\in \hat S_z$.  Then \[M_U\big(\Ind_{S_z}^G(\pi_{z}\rtimes V)\big)=1.\]
\end{cor}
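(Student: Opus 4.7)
The plan is to apply Theorem~\ref{thm-pss}\eqref{thm-pss-item1} and then identify the subgroup $H$ it produces with $S_z$ itself, after which the upper multiplicity is forced to be $1$ by the irreducibility of $V$.

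First I would invoke Theorem~\ref{thm-pss}\eqref{thm-pss-item1}. Since $G$ contains a principal stability subgroup $S$, the theorem provides a closed subgroup $H$ of $G$ with $H\subset S_z$, with $H$ conjugate to $S$, and such that
\[
M_U\big(\Ind_{S_z}^G(\pi_{z}\rtimes V)\big)=\max_{R\in\hat H}\big[V|_H:R\big]\leq\dim V.
\]

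The critical step is to show $H=S_z$. Since $S_z$ is, by hypothesis, also conjugate to $S$, both $H$ and $S_z$ lie in a single $G$-conjugacy class of closed subgroups of $G$. Write $S_z=gHg^{-1}$; then the inclusion $H\subset S_z$ rearranges to $g^{-1}Hg\subset H$. In the compact group $G$ the set $\{k\in G:k^{-1}Hk\subset H\}$ is a closed sub-semigroup and therefore a subgroup, so $g^{-1}$ also lies in this set, giving $gHg^{-1}\subset H$. Combined with $H\subset S_z=gHg^{-1}$, this forces $H=S_z$. This is the same elementary compact-group fact (mutually conjugate nested closed subgroups coincide) that underlies the uniqueness up to conjugacy of a principal stability subgroup cited in the discussion preceding Theorem~\ref{thm-pss}.

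Once $H=S_z$, the restriction $V|_H$ is just the irreducible representation $V$, so $[V|_H:R]$ equals $1$ when $R\simeq V$ and vanishes otherwise. Consequently $\max_{R\in\hat H}[V|_H:R]=1$, completing the proof.

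The only non-trivial ingredient is the compact-group step $H=S_z$; the rest is a direct appeal to Theorem~\ref{thm-pss} together with the fact that restriction of an irreducible representation to itself has multiplicity one.
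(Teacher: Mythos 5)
Your proof is correct and follows essentially the same route as the paper: apply Theorem~\ref{thm-pss} in the principal-stability-subgroup case, deduce $H=S_z$ from the fact that in a compact group mutually conjugate nested closed subgroups coincide, and conclude from the irreducibility of $V$. The only difference is that the paper cites Bredon (Proposition~1.9) for the compact-group step, whereas you prove it directly via the closed-subsemigroup argument, which is a valid standard proof of that fact.
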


\begin{proof} By Theorem~\ref{thm-pss} there exist a closed subgroup $H$ of $G$ such that $H$ is conjugate to $S$ and $H\subset S_z$, and  $Q\in \hat H$ such that $M_U\big(\Ind_{S_z}^G(\pi_{z}\rtimes V)\big)=[V|_H:Q]$. In particular, $[V|_H:Q]\geq 1$. But $S_z$ is also conjugate to $S$. Hence  $H\subset S_z=gHg^{-1}$ for some $g\in G$. Since $G$ is compact, $H=gHg^{-1}$ \cite[Proposition~1.9]{Bredon}. Thus $H=S_z$. Now $[V|_H:Q]=[V:Q]=1$ since $V$ is irreducible.
\end{proof}

The next corollary applies, in particular, in the case where the trivial subgroup is a principal stability subgroup of $G$ (see Examples ~\ref{ex-S3} and ~\ref{ex-D4}).

\begin{cor}\label{new-cor-pss-trivial} Let $(G,X)$ be a second-countable transformation group with $G$ compact. Suppose that $G$ has a principal stability subgroup $S$ contained in the centre $Z(G)$ of $G$.
\begin{enumerate}
\item Let $z\in X$ and $V\in\hat S_z$. Then $M_U\big(\Ind_{S_z}^G(\pi_{z}\rtimes V)\big)= \dim V$.
\item $C_0(X)\rtimes_\lt G$ is a Fell algebra if and only if $S_z$ is abelian for every $z\in X$.
\end{enumerate}
\end{cor}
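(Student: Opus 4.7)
The plan is to assemble Theorem~\ref{thm-pss}(1), Schur's lemma, and Corollary~\ref{cor-dim1}(2), with the key observation that the centrality hypothesis collapses the multiplicity formula of Theorem~\ref{thm-pss}.

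For part (1), I would first apply Theorem~\ref{thm-pss}(1) to produce a closed subgroup $H$ of $G$ with $H\subseteq S_z$, $H$ conjugate to $S$, and
$$M_U\big(\Ind_{S_z}^G(\pi_z\rtimes V)\big)=\max_{R\in\hat H}\big[V|_H:R\big].$$
Since $S\subseteq Z(G)$, the only $G$-conjugate of $S$ is $S$ itself, so $H=S$, and in particular $S\subseteq S_z$. Now $S\subseteq Z(G)$ forces $S\subseteq Z(S_z)$, so by Schur's lemma applied to the irreducible $V\in\hat S_z$ there is a character $\chi\in\hat S$ with $V(s)=\chi(s)I_{\H_V}$ for all $s\in S$. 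Hence $V|_S$ is a direct sum of $\dim V$ copies of $\chi$, and
$$\max_{R\in\hat S}\big[V|_S:R\big]=\big[V|_S:\chi\big]=\dim V.$$

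For part (2), the ``if'' direction is immediate from Corollary~\ref{cor-dim1}(2). For the converse, suppose $C_0(X)\rtimes_\lt G$ is a Fell algebra. Then every irreducible representation has upper multiplicity $1$ by \cite[Theorem~4.6]{A}. By Lemma~\ref{lem-induction}, every irreducible representation of $C_0(X)\rtimes_\lt G$ is equivalent to some $\Ind_{S_z}^G(\pi_z\rtimes V)$ with $z\in X$ and $V\in\hat S_z$, so part~(1) forces $\dim V=1$ for every such $V$. A compact group whose irreducible unitary representations are all one-dimensional is abelian (its irreducibles separate points by Peter--Weyl, giving an embedding into a product of copies of $\T$), so every $S_z$ is abelian.

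The only substantive point is recognising that centrality of $S$ forces $H=S$ in the formula from Theorem~\ref{thm-pss}, after which Schur's lemma immediately identifies $V|_S$ as a scalar representation; everything else is a direct appeal to results already established in the paper.
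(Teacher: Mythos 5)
Your proposal is correct and follows essentially the same route as the paper: Theorem~\ref{thm-pss}(1) gives $H$ conjugate to $S$, centrality forces $H=S$, and Schur's lemma (the paper phrases it as $V(S)$ lying in the commutant $\C 1_{\H_V}$ of $V(S_z)$) gives $V|_S=(\dim V)\chi$, after which part~(2) is the same combination of part~(1), Lemma~\ref{lem-induction} and \cite[Theorem~4.6]{A}. The only cosmetic differences are your appeal to Corollary~\ref{cor-dim1}(2) for the ``if'' direction of (2) and the explicit Peter--Weyl remark, both of which are harmless.
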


\begin{proof}
Let $z\in X$ and $V\in {\hat S_z}$.
By Theorem~\ref{thm-pss} there exists a closed subgroup $H$ of $G$, conjugate to $S$, such that $H\subset S_z$ and
\[M_U\big(\Ind_{S_z}^G(\pi_{z}\rtimes V)\big)=\max_{R\in\hat H}\big[ V|_H:R \big].\]
Since $S\subset Z(G)$, we have $H=S$.
But $V(S)$ is contained in the commutant $\C1_{\H_V}$ of $V(S_z)$ and so $V|_S = (\dim V)\tau$ for some $\tau\in \hat S$. This gives (1).

It now follows from  \cite[Theorem~4.6]{A} that $C_0(X)\rtimes_\lt G$ is a Fell algebra if and only if $\dim V=1$ for all such $z$ and $V$. Since $S_z$ is abelian if and only if every irreducible representation is $1$-dimensional, (2) follows.
\end{proof}

\begin{example}\label{ex-orthogonal}
Let $G$ be the special orthogonal group $\SO(n)$ of orthogonal matrices in $M_n(\R)$ with determinant $1$.  Then $G$ is compact, and  $G$ acts on $X=\R^n$ by matrix multiplication $A\cdot x=Ax$.  We will show, using Theorem~\ref{thm-pss},  that $C_0(X)\rtimes_\lt G$ is a Fell algebra.

Since each element of $G$ is an isometry, the orbits are parametrised by $r\in[0,\infty)$ with $O_r=\{x\in\R^n: \|x\|=r\}$.   The orbit $O_0$ is $\{0\}$ and the stability subgroup at $0$ is $S_0=G$.  Fix $r>0$. Take $x_r:=r(1,0,\dots, 0)$ to represent the orbit $O_r$. Then $S_{x_r}=1\oplus\SO(n-1)$, the subgroup of $G$ leaving the first coordinate fixed.  If $y\in O_r$ then the stability subgroup $S_y$ is conjugate to $S_{x_r}$.  Thus $1\oplus\SO(n-1)$ is a principal stability subgroup with dense subset
\[
Y:=\{x\in X: \text{$S_x$ is conjugate to $1\oplus\SO(n-1)$}\}=\R^n\setminus\{0\}\subset X.
\]
By Corollary~\ref{cor-1}  we have  $M_U\big(\Ind_{S_y}^G(\pi_{y}\rtimes V)\big)=1$ for all $y\in Y$ and $V\in \hat S_y$.

However, for these representations, more is already known. Since $Y$ is open and $G$-invariant, $C_0(Y)\rtimes_\lt G$ is an ideal in $C_0(X)\rtimes_\lt G$. The stabiliser map $y\mapsto S_y$ is continuous on $Y$ by, for example, \cite[Lemma~1.2]{AK-stablerank}. Thus $C_0(Y)\rtimes_\lt G$ has continuous trace by \cite[Corollary~2]{E}. Thus $M_U\big(\Ind_{C_0(Y)\rtimes S_y}^{C_0(Y)\rtimes G}(\pi_{y}\rtimes V)\big)=1$ for all $y\in Y$ and $V\in \hat S_y$.  By \cite[Lemma~2.7]{ASS}, the corresponding irreducible representation $\Ind_{S_y}^G(\pi_{y}\rtimes V)$ of $C_0(X)\rtimes_{\lt}G$ also has upper multiplicity $1$, as we have already seen above.

It remains to consider $\rho_{0,V}:=\Ind_{S_0}^G(\pi_{0}\rtimes V)=\pi_{0}\rtimes V$ where $V\in \hat S_0=\hat G$.  By Theorem~\ref{thm-pss} there exists a closed subgroup $H$ of $G$ such that $H$ is conjugate to $1\oplus \SO(n-1)$ and a representation $R\in\hat H$ such that $M_U(\rho_{0,V})=\big[V|_H:R\big]$. Since $H$ is conjugate to $1\oplus \SO(n-1)$ there exists $g\in G$ such that $g\cdot R$ is an irreducible sub-representation of $(g\cdot V)|_{1\oplus \SO(n-1)}$ and  $M_U(\rho_{0,V})=\big[(g\cdot V)|_{1\oplus \SO(n-1)}:g\cdot R\big]$. By the branching theorem for $\SO(n)$ with respect to $\SO(n-1)$ (see \cite[\S1 and \S3]{Knapp} and \cite{Mur}), each irreducible sub-representation of $(g\cdot V)|_{1\oplus \SO(n-1)}$ occurs with multiplicity $1$. It follows that $M_U(\rho_{0,V})=1$.

We have shown that $M_U(\rho)=1$ for all $\rho\in (C_0(X)\rtimes_\lt G)^\wedge$. Thus $C_0(X)\rtimes_\lt G$ is a Fell algebra by \cite[Theorem~4.6]{A}.
\end{example}

\begin{example}\label{ex-S3}  The symmetric group $G=S_3$ acts on $X=\R^3$ by $\sigma\cdot(x_1, x_2, x_3)=(x_{\sigma(1)}, x_{\sigma(2)}, x_{\sigma(3)})$.  Here $\{e\}$ is a principal stability subgroup with
\[
Y=\{x\in\R^3: \text{$S_x$ is conjugate to $\{e\}$}\}=\{x\in\R^3: x_i\neq x_j \text{\ if\ }i\neq j\}.
\]
By Corollary~\ref{cor-1}, $M_U\big(\Ind_{S_y}^G(\pi_{y}\rtimes 1|_{S_y})\big)=1$ for all $y\in Y$.
(Again $Y$ is open in $X$, and we note that, moreover, $C_0(Y)\rtimes_\lt G$ has continuous trace by \cite[Theorem~17]{green} because the action of $G$ on $Y$ is free.)

Let $z\in X\setminus Y$.  Then the stability subgroup $S_z$ is one of $\{e, (12)\}$, $\{e, (13)\}$, $\{e, (23)\}$ or $G$. If $S_z$ has order $2$ then every irreducible representation of $S_z$ is $1$-dimensional, and $M_U(\Ind_{S_z}^G(\pi_{z}\rtimes V))=1$ for every $V\in \hat S_z$ by Corollary~\ref{cor-dim1}. If $S_z=G$ then $\hat S_z=\{1, \sign, Q\}$ where $Q$ is the $2$-dimensional representation of $S_3$ associated to the partition $(2,1)$ of $3$ (see, for example, \cite[Theorem 4.12 and Example 5.1]{James}).
By Corollary~\ref{new-cor-pss-trivial}(1),
\[
M_U(\Ind_{S_z}^G(\pi_{z}\rtimes V))=
\begin{cases}1 &\text{if $V=1$}\\
1&\text{if $V=\sign$}\\
2 &\text{if $V=Q$}.
\end{cases}
\]
In particular, $C_0(X)\rtimes_\lt G$ is not a Fell algebra.
\end{example}

\begin{example}\label{ex-D4}
We consider \cite[Example 3.5]{EH} in which $G=D_4$, the $8$-element dihedral group, acts on $X=\T^2$.
The trivial subgroup is a principal stability group and the only non-abelian stability subgroup is $D_4$ itself, occurring at the points $(1,1)$ and $(-1,-1)$ in $\T^2$. Let $\lambda$ be the `standard' $2$-dimensional irreducible representation of $D_4$. It follows from Corollary~\ref{new-cor-pss-trivial}(1) that the only non-Fell points in the spectrum of $C(X)\rtimes_\lt G$ are $\pi_{(1,1)}\rtimes \lambda$ and $\pi_{(-1,-1)}\rtimes \lambda$, and that both have upper multiplicity equal to $2$.
\end{example}

\begin{remark}\label{remark-Gfinite}
Motivated by the previous two examples, we briefly consider the case of a second countable transformation group $(G,X)$ in which $G$ is finite. Although there need not be a principal stability group, the space $X$ is automatically of finite $G$-orbit type. By either Theorem~\ref{thm-new-general} or Theorem~\ref{thm-pss}, for $z\in X$ and $V\in \hat S_z$, there is a closed subgroup $H$ of $S_z$ and $R\in \hat H$ such that
$$M_U\big(\Ind_{S_z}^G(\pi_{z}\rtimes V)\big)=\big[ V|_H:R \big] = \big[\Ind^{S_z}_H R:V \big],$$
where the second equality follows from Frobenius Reciprocity. Thus
$$M_U\big(\Ind_{S_z}^G(\pi_{z}\rtimes V)\big)(\dim R)\leq \dim V$$
 and
$$M_U\big(\Ind_{S_z}^G(\pi_{z}\rtimes V)\big)(\dim V) \leq \dim (\Ind^{S_z}_H R) = (\dim R)[S_z:H].$$
It follows from these inequalities that $M_U\big(\Ind_{S_z}^G(\pi_{z}\rtimes V)\big)^2 \leq [S_z:H]$. If there does exist a principal stability group $S$ then $H$ may be chosen conjugate to $S$ (Theorem~\ref{thm-pss}) and hence
$$M_U\big(\Ind_{S_z}^G(\pi_{z}\rtimes V)\big)^2 \leq \frac{|S_z|}{|S|} \leq [G:S].$$
The final inequality is somewhat similar to the estimate for the upper multiplicity of irreducible representations of Moore groups in \cite[Corollary 2.3]{AK}.
\end{remark}

\section{Open subsets of Fell points in the spectrum}\label{sec-Fellpoints}

Let $(G, X)$  be a second-countable transformation group with $G$ compact.
In this section we consider interesting open subsets  of irreducible representations in $(C_0(X)\rtimes_\lt G)^\wedge$ satisfying Fell's condition.  For example, the  set we consider in Lemma~\ref{lem-faset} (see also \cite[Theorem~3.3]{MR} and  \cite[\S3]{EH}) is homeomorphic to the spectrum of the fixed-point algebra.

We start by recalling some background. Let $G$ be a compact group and $\alpha: G\to\Aut A$ be a continuous action of $G$ by automorphisms of a $C^*$-algebra $A$.  Define $p:G\to M(A)$ by $p(t)=1$ for all $t\in G$.  As pointed out in \cite{R}, $pL^1(G,A)p$ is the closed subalgebra of $L^1(G,A)$ consisting of constant functions from $G$ into the fixed-point algebra  \[A^\alpha:=\{a\in A :\alpha_s(a)=a\text{\ for all $s\in G$}\}.\]  Thus $f\mapsto f(e)$ is an isomorphism of $pL^1(G,A)p$ onto $A^\alpha$. Passing to the completion, we get $p\in M(A\rtimes_\alpha G)$ such that the hereditary subalgebra $p(A\rtimes_\alpha G)p$ of $A\rtimes_\alpha G$ is isomorphic to  $A^\alpha$.  It follows that the ideal
\[I_{\GFA}:=\overline{(A\rtimes_\alpha G)p(A\rtimes_\alpha G)}\]
is Morita equivalent to $A^\alpha$ via the imprimitivity bimodule $(A\rtimes_\alpha G)p$. An action of a compact group on $A$ is in particular a proper action on $A$ in the sense of \cite[Definition~1.2]{Rieffel}.  When $G$ is compact, the Morita equivalence built in \cite{Rieffel} reduces to the  $I_{\GFA}$--$A^\alpha$ Morita equivalence of \cite{R} discussed above.

The $I_{\GFA}$--$A^\alpha$ Morita equivalence  has been exploited widely.  For example, Gootman and Lazar use non-abelian duality in \cite[Theorem~3.2]{GL2} to prove that for the action  of a compact group on $A$, the crossed product $A\rtimes_\alpha G$ is liminal (postliminal) if and only if the fixed-point algebra $A^\alpha$ is liminal (postliminal).  The ``if'' direction  of this sort of result fails for Fell algebras, as the following example shows.

\begin{example} Let $(G,X)=(S_3,\R^3)$ be the transformation group of Example~\ref{ex-S3}. We showed there that $C_0(X)\rtimes_\lt G$ is not a Fell algebra. But the fixed-point algebra $C_0(X)^\lt$, being commutative, is a Fell algebra.
 \end{example}

\begin{lemma}\label{lem-faset}  Let $(G,X)$ be a second-countable transformation group with $G$ compact.  Then the ideal $I_{\GFA}$ is a $C^*$-algebra with continuous trace  and spectrum homeomorphic to
\[
\{ \Ind_{S_x}^G(\pi_{x}\rtimes 1|_{S_x}):x\in X\}.
\]
\end{lemma}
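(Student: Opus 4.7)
The plan is to build on the discussion preceding the lemma, in which it is shown that $p(C_0(X)\rtimes_\lt G)p$ is isomorphic to the fixed-point algebra $C_0(X)^\lt$ and that $I_{\GFA}$ is Morita equivalent to $C_0(X)^\lt$ via the imprimitivity bimodule $(C_0(X)\rtimes_\lt G)p$. Since $G$ is compact, $X/G$ is a locally compact Hausdorff space and $C_0(X)^\lt \cong C_0(X/G)$ (via $f\mapsto \tilde f$ where $\tilde f(G\cdot x)=f(x)$). Being commutative, $C_0(X/G)$ trivially has continuous trace, and this property passes through Morita equivalence; hence $I_{\GFA}$ has continuous trace. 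This settles the first assertion.

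For the spectrum, I would combine two homeomorphisms. The Rieffel correspondence yields a homeomorphism $I_{\GFA}^\wedge \cong (C_0(X/G))^\wedge = X/G$. Separately, because $I_{\GFA}$ is an ideal of $C_0(X)\rtimes_\lt G$, its spectrum is canonically identified with the open subset of $(C_0(X)\rtimes_\lt G)^\wedge$ consisting of those $\rho$ whose extension to $M(C_0(X)\rtimes_\lt G)$ satisfies $\rho(p)\neq 0$. By Lemma~\ref{lem-induction}, any such $\rho$ is equivalent to some $\Ind_{S_x}^G(\pi_x \rtimes V) \simeq \tilde\pi_x \rtimes \Ind_{S_x}^G V$ on the induced Hilbert space $L_V^2$, with $V\in \hat S_x$. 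In this realization $\rho(p) = \int_G (\Ind_{S_x}^G V)_s\, d\mu(s)$ is the orthogonal projection onto the $G$-fixed vectors of $L_V^2$, and by Frobenius reciprocity the rank of this projection is
\[
\big[\Ind_{S_x}^G V : 1_G\big] = \big[V : 1_G|_{S_x}\big] = \big[V : 1|_{S_x}\big],
\]
which equals $1$ if $V = 1|_{S_x}$ and $0$ otherwise, since $V$ is irreducible. Therefore the open set coincides with $\{\Ind_{S_x}^G(\pi_x\rtimes 1|_{S_x}) : x\in X\}$.

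To finish, I would observe that the assignment $x\mapsto \Ind_{S_x}^G(\pi_x\rtimes 1|_{S_x})$ is $G$-invariant by Lemma~\ref{lemma-D} (using $g\cdot 1|_{S_x} = 1|_{S_{g\cdot x}}$) and hence descends to a well-defined map $X/G \to I_{\GFA}^\wedge$, which I would then identify with the Rieffel homeomorphism above. The only non-routine point is this last identification: one must verify that, under the $(C_0(X)\rtimes_\lt G)p$--$C_0(X/G)$ bimodule, the evaluation character of $C_0(X/G)$ at $G\cdot x$ pulls back to the representation $\Ind_{S_x}^G(\pi_x\rtimes 1|_{S_x})$. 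I expect this to be the main (though still standard) obstacle; it can be handled by computing $(C_0(X)\rtimes_\lt G)p \otimes_{C_0(X/G)} \C_{G\cdot x}$ explicitly, or by noting that both maps $X/G \to (C_0(X)\rtimes_\lt G)^\wedge$ are continuous bijections onto $I_{\GFA}^\wedge$ that agree on the level of primitive ideals through the kernel function $\Phi$ defined just before Lemma~\ref{dense-in-spectrum}.
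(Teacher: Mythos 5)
Your proof is correct, and the continuous-trace part is exactly the paper's argument (Morita equivalence of $I_{\GFA}$ with the commutative algebra $C_0(X)^\lt$). For the identification of the spectrum, however, you take a genuinely different route. The paper simply quotes Marelli--Raeburn \cite[Theorem~3.3]{MR}, which gives $I_{\GFA}=\bigcap\{\ker\Ind_{S_x}^G(\pi_x\rtimes V): V\neq 1\}$, together with the observation from that proof that $I_{\GFA}\subset\ker\Ind_{S_x}^G(\pi_x\rtimes V)$ if and only if $V\neq 1$; this exhibits the complement of the claimed set as closed, hence the claimed set as the open subset of $(C_0(X)\rtimes_\lt G)^\wedge$ corresponding to the ideal $I_{\GFA}$. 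You instead re-derive this fact from scratch: you use that an irreducible $\rho$ is nonzero on the ideal generated by $p$ exactly when $\bar\rho(p)\neq 0$, compute $\bar\rho(p)=\int_G(\Ind_{S_x}^GV)_s\,d\mu(s)$ as the projection onto the $G$-fixed vectors of $L^2_V$, and apply Frobenius reciprocity to see that its rank is $[V:1|_{S_x}]\in\{0,1\}$. That computation is right (and is essentially the content of the ``slightly more'' that the paper extracts from the proof of \cite[Theorem~3.3]{MR}), so your argument is a self-contained substitute for the citation. One remark: your final paragraph, matching the Rieffel homeomorphism $I_{\GFA}^\wedge\cong X/G$ with the map $G\cdot x\mapsto\Ind_{S_x}^G(\pi_x\rtimes 1|_{S_x})$, is not needed for the lemma as stated --- the asserted homeomorphism is just the canonical identification of the spectrum of an ideal with an open subset of the ambient spectrum, which your second paragraph already delivers once the open set is pinned down --- so the step you single out as the main obstacle can simply be omitted.
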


\begin{proof} Since $I_{\GFA}$ is Morita equivalent to $C_0(X)^\lt$, which is commutative, $I_{\GFA}$ has continuous trace. Theorem~3.3 of \cite{MR} says that
\[I_{\GFA}=\bigcap \{ \ker\Ind_{S_x}^G(\pi_{x}\rtimes V):x\in X, V\in \hat S_x, V\neq 1\}.\]
But the proof of \cite[Theorem~3.3]{MR} shows slightly more:  that $I_{\GFA}\subset \ker\Ind_{S_x}^G(\pi_{x}\rtimes V)$ if and only if $V\neq 1$.  Thus $\{\Ind_{S_x}^G(\pi_{x}\rtimes V):x\in X, V\in \hat S_x, V\neq 1\}$ is closed in $(C_0(X)\rtimes_\lt G)^\wedge$, and the lemma follows.
\end{proof}

\begin{lemma}\label{lem-tau} Let $(G,X)$ be a second-countable transformation group with $G$ compact and $\tau$ a character of $G$. Let $z, x_n\in X$ and $V_n\in \hat S_{x_n}$ for $n\geq 1$. Suppose that $\Ind_{S_{x_n}}^G(\pi_{x_n}\rtimes V_n)\to \Ind_{S_z}^G(\pi_{z}\rtimes \tau|_{S_z})$ in $(C_0(X)\rtimes_\lt G)^\wedge$.  Then  $V_n=\tau|_{S_{x_n}}$ eventually.
\end{lemma}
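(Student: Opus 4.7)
The plan is to combine Lemma~\ref{lem-park} (which lifts the convergence $\rho_n\to\rho$ in the spectrum of $C_0(X)\rtimes_\lt G$ to convergence of the corresponding induced-representation kernels in $\Id C^*(G)$) with two applications of Frobenius Reciprocity to pin down $V_n$ exactly. The strategy mirrors Claim~C of the proof of Theorem~\ref{thm-pss}, and the central point is that $\tau$ being a one-dimensional representation of $G$ allows us to bring $\ker\tau\in\Id C^*(G)$ into play.

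Set $V:=\tau|_{S_z}$ and $\rho_n:=\Ind_{S_{x_n}}^G(\pi_{x_n}\rtimes V_n)$, so the hypothesis is $\rho_n\to\rho:=\Ind_{S_z}^G(\pi_z\rtimes V)$. By Lemma~\ref{lem-park}, $\ker\Ind_{S_{x_n}}^G V_n\to\ker\Ind_{S_z}^G V$ in $\Id C^*(G)$. Frobenius Reciprocity gives $[\Ind_{S_z}^G V:\tau]=[\tau|_{S_z}:V]=1$, so $\tau$ is equivalent to a subrepresentation of $\Ind_{S_z}^G V$; hence $\ker\Ind_{S_z}^G V\subset\ker\tau$. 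A routine consequence of the definition of the topology $\tau_w$ on $\Id C^*(G)$ (every basic open neighbourhood of an ideal $J$ has the form $\{K:K\not\supset I_j,\,1\le j\le m\}$, and any $I\subset J$ lies in the same neighbourhood) is that an ideal containing the limit of a convergent sequence is itself a limit; thus $\ker\Ind_{S_{x_n}}^G V_n\to\ker\tau$ in $\Id C^*(G)$.

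Now apply \cite[Lemma~4.8]{EH}, exactly as invoked in Claim~C of the proof of Theorem~\ref{thm-pss}: there exists $N$ such that for all $n\ge N$, $\tau$ is equivalent to a subrepresentation of $\Ind_{S_{x_n}}^G V_n$. A second application of Frobenius Reciprocity then yields $1\le[\Ind_{S_{x_n}}^G V_n:\tau]=[\tau|_{S_{x_n}}:V_n]$ for such $n$. Since $\tau|_{S_{x_n}}$ is one-dimensional, hence irreducible, the only way $V_n$ can appear as a subrepresentation of $\tau|_{S_{x_n}}$ is $V_n\simeq\tau|_{S_{x_n}}$, which is the desired conclusion.

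The main obstacle, if any, is pairing the correct form of Frobenius Reciprocity ($[\Ind_H^G V:W]=[W|_H:V]$, applied with $W=\tau$) with the monotonicity of $\tau_w$ on the ideal space; both steps are routine once identified, and the argument becomes a streamlined reuse of the ingredients already developed for Theorem~\ref{thm-pss}.
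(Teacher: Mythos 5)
Your argument is correct and follows the paper's proof essentially step for step: Lemma~\ref{lem-park}, then convergence of the kernels to $\ker\tau$ in $\Id C^*(G)$, then \cite[Lemma~4.8]{EH}, then Frobenius Reciprocity against the one-dimensional representation $\tau|_{S_{x_n}}$. The only (harmless) variation is in justifying $\ker\Ind_{S_z}^G(\tau|_{S_z})\subset\ker\tau$: you use Frobenius Reciprocity to exhibit $\tau$ as an actual subrepresentation of $\Ind_{S_z}^G(\tau|_{S_z})$, whereas the paper invokes amenability of $G$ and weak containment via \cite[Theorem~6.9]{KT}; both are valid, and your spelled-out monotonicity argument for the topology $\tau_w$ is exactly what the paper leaves implicit.
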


\begin{proof}
By Lemma~\ref{lem-park},
\[\ker\Ind_{S_{x_n}}^G V_n \to \ker \Ind_{S_z}^G(\tau|_{S_z})\]
 in $\Id C^*(G)$.
Since $G$ is amenable, we have $\ker \Ind_{S_z}^G(\tau|_{S_z})\subset \ker \tau$ by \cite[Theorem~6.9]{KT} and hence $\ker\Ind_{S_{x_n}}^G V_n\to \ker\tau$ in $\Id C^*(G)$.  By \cite[Lemma 4.8]{EH}, there exists $N$ such that, for all $n\geq N$, $\tau$ is equivalent to a sub-representation of $\Ind_{S_{x_n}}^G V_n$. By Frobenius Reciprocity, $V_n$ is equivalent to a sub-representation of the $1$-dimensional representation $\tau|_{S_{x_n}}$ and hence $V_n=\tau|_{S_{x_n}}$ when $n\geq N$.
\end{proof}

\begin{prop}\label{prop-openset}
Let $(G,X)$ be a second-countable transformation group with $G$ compact and $\tau$ a character of $G$.  Then
\[
O_\tau:=\{ \Ind_{S_x}^G(\pi_{x}\rtimes V):x\in X, V=\tau|_{S_x}\}
\]
is open in $(C_0(X)\rtimes_\lt G)^\wedge$.
\end{prop}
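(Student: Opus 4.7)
The plan is to show that the complement $(C_0(X)\rtimes_\lt G)^\wedge \setminus O_\tau$ is closed. Since $C_0(X)\rtimes_\lt G$ is separable, its spectrum is second countable, and so it suffices to check sequential closure.

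Before running the sequence argument I would observe that $O_\tau$ is well-defined as a subset of the spectrum, i.e.\ invariant under equivalence of representations. This is exactly where the hypothesis that $\tau$ is a \emph{character} (not merely an irreducible representation of $G$) comes in: by Lemma~\ref{lem-induction} and Lemma~\ref{lemma-D}, any representative of the equivalence class of $\Ind_{S_x}^G(\pi_x\rtimes \tau|_{S_x})$ has the form $\Ind_{S_{g\cdot x}}^G(\pi_{g\cdot x}\rtimes (g\cdot \tau|_{S_x}))$, and the calculation $\tau(g^{-1}tg)=\tau(t)$ (valid since $\tau$ is a one-dimensional homomorphism) gives $g\cdot \tau|_{S_x} = \tau|_{S_{g\cdot x}}$.

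Now suppose $\rho_n \to \rho$ in $(C_0(X)\rtimes_\lt G)^\wedge$ with each $\rho_n$ in the complement of $O_\tau$, and suppose for contradiction that $\rho \in O_\tau$. Write $\rho \simeq \Ind_{S_z}^G(\pi_z\rtimes \tau|_{S_z})$ for some $z\in X$, and use Lemma~\ref{lem-induction}\eqref{lem-induction-item1} to realise each $\rho_n$ as $\Ind_{S_{x_n}}^G(\pi_{x_n}\rtimes V_n)$ for some $x_n \in X$ and $V_n \in \hat S_{x_n}$. Applying Lemma~\ref{lem-tau} directly to this convergent sequence yields $V_n = \tau|_{S_{x_n}}$ for all sufficiently large $n$, so $\rho_n \in O_\tau$ eventually, contradicting the choice of $\rho_n$.

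The argument is short because the real work has already been done in Lemma~\ref{lem-tau}. The only step that needs care is the opening remark about equivalence-invariance of $O_\tau$, and the main non-routine ingredient hidden inside the appeal to Lemma~\ref{lem-tau} is the amenability of $G$, which is what forces $\ker\Ind_{S_z}^G(\tau|_{S_z}) \subset \ker \tau$ and thereby produces $\tau$ (rather than some other irreducible) as a limit of $\ker\Ind_{S_{x_n}}^G V_n$ in $\Id C^*(G)$; Frobenius Reciprocity together with $\dim\tau = 1$ then pins $V_n$ down exactly.
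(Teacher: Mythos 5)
Your argument is correct and is essentially the paper's own proof: both reduce to sequences via second countability of the spectrum, realise the approximating representations as induced representations via Lemma~\ref{lem-induction}, and invoke Lemma~\ref{lem-tau} to force $V_n=\tau|_{S_{x_n}}$ eventually, yielding the contradiction. (The paper phrases the reduction as choosing points from a decreasing sequence of basic neighbourhoods of a point of $O_\tau$ rather than as sequential closedness of the complement, but this is the same argument.)
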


\begin{proof} Fix $\rho:=\Ind_{S_z}^G(\pi_{z}\rtimes \tau|_{S_z})$. Suppose, by way of contradiction, that there is no open subset $U$ of $(C_0(X)\rtimes_\lt G)^\wedge$ such that $\rho\in U\subset O_\tau$.  Let $(U_n)$ be a decreasing sequence of basic open neighborhoods of $\rho$. Then, for every $n$, $U_n\not\subset O_\tau$, and so there exist $x_n\in X$ and  $V_n\in {\hat S}_{x_n}\setminus \{ \tau|_{S_{x_n}}\}$ such that $\Ind_{S_{x_n}}^G(\pi_{x_n}\rtimes V_n)\in U_n$. Then $\Ind_{S_{x_n}}^G(\pi_{x_n}\rtimes V_n)\to \rho$.
By Lemma~\ref{lem-tau}, $V_n=\tau|_{S_{x_n}}$ eventually, a contradiction. It follows that $O_\tau$ is open.
\end{proof}

\begin{cor}\label{cor-open-Fell}
 Let $(G,X)$ be a second-countable transformation group with $G$ compact. Then the subset
 \begin{equation*}\label{openset}
O_{\characters}:=\{\Ind_{S_x}^G(\pi_{x}\rtimes V):x\in X, V=\tau|_{S_x} \text{\ for some  character $\tau$ of $G$}\}
 \end{equation*}
of $(C_0(X)\rtimes_\lt G)^\wedge$ is open and the corresponding ideal
 of $C_0(X)\rtimes_\lt G$ is a Fell algebra.
\end{cor}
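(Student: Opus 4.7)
The plan is in two steps, both of which follow essentially immediately from results already developed above.

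First, to establish openness of $O_{\characters}$, I would use the decomposition
\[
O_{\characters} \;=\; \bigcup_{\tau} O_\tau,
\]
where the union runs over all characters $\tau$ of $G$ and each $O_\tau$ is the set from Proposition~\ref{prop-openset}. Each $O_\tau$ is open in $(C_0(X)\rtimes_\lt G)^\wedge$ by that proposition, and an arbitrary union of open sets is open, so $O_{\characters}$ is open. Let $I$ denote the corresponding ideal of $C_0(X)\rtimes_\lt G$, so that $\hat I$ is canonically homeomorphic to $O_{\characters}$.

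Second, to show that $I$ is a Fell algebra, I would verify that every irreducible representation of $I$ has upper multiplicity $1$, and then invoke \cite[Theorem~4.6]{A}. Every $\sigma \in \hat I$ is the restriction to $I$ of a unique $\rho = \Ind_{S_x}^G(\pi_{x}\rtimes \tau|_{S_x}) \in O_{\characters}$ for some $x \in X$ and some character $\tau$ of $G$. Since $\tau|_{S_x}$ is one-dimensional, Corollary~\ref{cor-dim1}(1) gives $M_U(\rho) = 1$ as a representation of the ambient crossed product $C_0(X)\rtimes_\lt G$. Because $\hat I$ is open in $(C_0(X)\rtimes_\lt G)^\wedge$, the passage to the ideal preserves upper multiplicity by \cite[Lemma~2.7]{ASS} (the same transfer principle used in Example~\ref{ex-orthogonal}), so $M_U(\sigma) = 1$ in $\hat I$. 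Applying \cite[Theorem~4.6]{A} inside $I$ then shows $I$ is a Fell algebra.

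There is no real obstacle here: each step is a direct application of something already proved or cited. The only mildly delicate point is the identification of the upper multiplicity of $\sigma$ inside $I$ with the upper multiplicity of $\rho$ inside $C_0(X)\rtimes_\lt G$, which is exactly where the openness of $O_{\characters}$ (and hence of $\hat I$ in $(C_0(X)\rtimes_\lt G)^\wedge$) is essential; it ensures the relative topologies agree locally, so that \cite[Lemma~2.7]{ASS} applies.
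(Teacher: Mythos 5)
Your proposal is correct and follows essentially the same route as the paper: openness via Proposition~\ref{prop-openset} and a union of open sets, then upper multiplicity $1$ from Corollary~\ref{cor-dim1} combined with \cite[Theorem~4.6]{A}. The only difference is that you make explicit the transfer of upper multiplicity to the ideal via \cite[Lemma~2.7]{ASS}, a step the paper leaves implicit; this is a correct and welcome elaboration rather than a deviation.
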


\begin{proof} By Proposition~\ref{prop-openset}, $O_{\characters}$ is a union of open sets and hence is open.  Since $V$ is a character of $S_z$, each $\Ind_{S_x}^G(\pi_{x}\rtimes V)$ in $O_{\characters}$ has upper multiplicity $1$ by Corollary~\ref {cor-dim1}. Thus the ideal corresponding to $O_{\characters}$ is a Fell algebra.
\end{proof}

\begin{remark}
Suppose that $G$ is abelian. Then every irreducible representation of $G$ is a character, and  every character of a stability subgroup extends to a character of $G$ (see, for example, \cite[Corollary~24.12]{Hewitt-Ross}).  Thus $O_{\characters}$ is all of the spectrum of $C_0(X)\rtimes G$.
\end{remark}

\begin{remark}  The set $O_{\characters}$ is not necessarily Hausdorff. To see this, let $(G,X)=(S_3,\R^3)$ be the transformation group of Example~\ref{ex-S3}. Let $x_n=(0,2/n,1/n)$ and $z=(0,0,0)$. Then $x_n\to z$,  $S_{x_n}=\{e\}$ and $S_z=S_3$. Set $\rho_n=\Ind_{\{e\}}^G(\pi_{x_n}\rtimes 1|_{\{e\}})$.   Apply Proposition~\ref{prop-traces-converging-new-new}  with the trivial sequence $(\{e\}, 1))_n$ to get
\[
\tr\big( \rho_n(a) \big)
\to
\sum_{W\in \hat S_3}\big[W|_{\{e\}}:1|_{\{e\}}\big]\tr\big(\pi_{z,W}\rtimes W)(a) \big)
\]
for all $a\in C(G, C_c(X))^+$.  Note that $\big[W|_{\{e\}}:1|_{\{e\}}\big]\geq 1$ for all $W\in \hat S_3$.
By \cite[Theorem~4.1]{ASS}, the set of limits of $(\rho_n)$ is $\{\pi_{z,W}\rtimes W:W\in\hat S_3\}$. In particular, $(\rho_n)$ is a sequence in $O_{\characters}$ converging to the distinct points  $\pi_{z}\rtimes 1$ and $\pi_{z}\rtimes \sign$  in $O_{\characters}$. Thus $O_{\characters}$ is not Hausdorff.
\end{remark}

\section{Extension to Cartan transformation groups}\label{sec-Cartan}

In this section, we consider a second countable transformation group $(G,X)$ in which the group $G$ is not necessarily compact. A subset $Y$ of $X$ is \emph{wandering} if
\[
\{s\in G: s\cdot Y\cap Y\neq \emptyset\}
\]
is a relatively compact subset of $G$ (such subsets $Y$ are called ``thin'' in \cite[Definition~1.1.1]{palais} and ``wandering'' in, for example, \cite[Definition on p.88]{green1}). The transformation group $(G,X)$ is \emph{proper} (or $G$ \emph{acts properly on} $X$) if every compact subset of $X$ is wandering. Equivalently, $(G,X)$ is proper if the function $(g,x)\mapsto (g\cdot x, x)$ from $G\times X$ to $X\times X$ is proper in the sense that inverse images of compact sets are compact. More generally, $(G,X)$ is said to be \emph{Cartan} (or $X$ is a \emph{Cartan $G$-space}) if every element of $X$ has a wandering neighbourhood. Indeed, $(G,X)$ is Cartan if and only if it is proper and the orbit space $X/G$ is Hausdorff \cite[Theorem~1.2.9]{palais}. The example on page~298 of \cite{palais}, in which the Hausdorff space $X$ is indeed locally compact, shows that a  Cartan transformation group may not be proper.
 If $(G,X)$ is Cartan, then all the stability subgroups are compact and the orbits are closed \cite[Proposition~1.1.4]{palais}.  Moreover, if $U$ is a non-empty wandering open subset of $X$, then $(G, G\cdot U)$ is proper by  \cite[Proposition~1.2.4]{palais}. Hence
the Cartan transformation groups $(G,X)$ are precisely those which are \emph{locally proper} in the sense that each point of $X$ has a $G$-invariant open neighbourhood on which $G$ acts properly.

Proper transformation groups  are locally induced from  compact transformation groups  \cite{abel}, and we now discuss how this can be used to extend some of our results to the more general setting of Cartan transformation groups. We thank the referee for pointing this out to us.

Let $(G, X)$ be a Cartan transformation group. Let $z\in X$ and let $U_0$ be a $G$-invariant neighborhood of $z$ such that $(G, U_0)$ is proper. By  \cite[Theorem~3.3 ]{abel} there exist an open  $G$-invariant neighbourhood $U$ of $z$ in $U_0$, a compact subgroup $K$ of $G$ with $S_z\subset K$ and a $G$-equivariant map $f:U\to G/K$ with $f(z)=K$.  Then the closed $K$-invariant subset $Y: = f^{-1}(K)$ of $U$ contains $z$ and has the property that $U$ is $G$-equivariantly homeomorphic to an induced space $(G\times Y)/K$ \cite[Lemma~3.5]{abel}. It follows from the $G$-equivariance of $f$ that, for $y\in Y$, the stability subgroup in $G$ is the same as in $K$.

Since  $Y$ is closed in the locally compact space $U$ it is  locally compact.  It then follows from a special case of Raeburn's Symmetric Imprimitivity Theorem (\cite{rae-sit}, see \cite[Corollary~4.17]{tfb^2}), that   $C_0(U)\rtimes_{\lt} G$  and $C_0(Y)\rtimes_{\lt} K$ are Morita equivalent.  We refer to  \cite[\S1]{EH}  and the preamble to Proposition~3.13  of \cite{EH} for the details.   We can now show how to generalise Theorem~\ref{thm-new-general} from  compact to Cartan transformation groups.

\begin{thm}\label{thm-Cartan} Let $(G,X)$ be a second-countable Cartan transformation group. Let $z\in X$ and $V\in \hat S_z$.
 Then there exist  a closed subgroup $H$ of $G$ with $H\subset S_z$ and an irreducible subrepresentation $R$ of $V|_H$
such that
\[M_U\big(\Ind_{S_z}^G(\pi_{z}\rtimes V)\big) =[ V|_H:R \big] \leq \dim V < \infty.\]
Moreover, if the stabiliser map $X\to \Sigma(G)$, $x\to S_x$, is continuous at $z$ then
$$M_U\big(\Ind_{S_z}^G(\pi_{z}\rtimes V)\big) =1.$$
\end{thm}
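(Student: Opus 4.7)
The strategy is to reduce to Theorem~\ref{thm-new-general} by exploiting the local structure of Cartan actions summarised just before the statement. First, since $(G,X)$ is Cartan, choose a $G$-invariant open neighbourhood $U_0$ of $z$ on which $G$ acts properly, and invoke \cite[Theorem~3.3 and Lemma~3.5]{abel} to produce an open $G$-invariant neighbourhood $U$ of $z$ inside $U_0$, a compact subgroup $K$ of $G$ with $S_z\subset K$, and a closed $K$-invariant subset $Y$ of $U$ containing $z$, such that $U$ is $G$-equivariantly homeomorphic to $(G\times Y)/K$ and, crucially, such that for each $y\in Y$ the $K$-stabiliser of $y$ coincides with the $G$-stabiliser $S_y$.

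Next I localise the given representation. Since $U$ is $G$-invariant and open, $C_0(U)\rtimes_\lt G$ is an ideal of $C_0(X)\rtimes_\lt G$. Because $G\cdot z\subset U$, the representation $\rho:=\Ind_{S_z}^G(\pi_z\rtimes V)$ does not vanish on this ideal and therefore restricts to an irreducible representation $\rho'$ of $C_0(U)\rtimes_\lt G$ with $M_U(\rho)=M_U(\rho')$ by \cite[Lemma~2.7]{ASS}. Raeburn's Symmetric Imprimitivity Theorem, applied as in \cite[\S1]{EH} and the preamble to \cite[Proposition~3.13]{EH}, then yields a $C_0(U)\rtimes_\lt G$--$C_0(Y)\rtimes_\lt K$ imprimitivity bimodule; the resulting Rieffel correspondence is a homeomorphism of spectra and, being implemented by a Morita equivalence, preserves upper multiplicities. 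Under this correspondence $\rho'$ is carried to an irreducible representation $\sigma$ of $C_0(Y)\rtimes_\lt K$ with $M_U(\sigma)=M_U(\rho')$, and by tracking the construction one identifies $\sigma$ with $\Ind_{S_z}^K(\pi_z\rtimes V)$ (viewed on the $Y$-side). This identification of $\sigma$ is the step that I expect to demand the most care: it amounts to verifying the compatibility of Green's induction process with the Rieffel correspondence arising from the symmetric imprimitivity construction, using that $S_z$ is simultaneously the $G$- and $K$-stabiliser of $z$.

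With the reduction in place, Theorem~\ref{thm-new-general} applied to the second-countable transformation group $(K,Y)$ with $K$ compact produces a closed subgroup $H$ of $K$ with $H\subset S_z$ and an irreducible subrepresentation $R$ of $V|_H$ such that
\[M_U(\sigma)=[V|_H:R]\leq\dim V<\infty.\]
Since $K$ is closed in $G$, $H$ is a closed subgroup of $G$, and chaining $M_U(\rho)=M_U(\rho')=M_U(\sigma)$ delivers the claimed equality.

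For the moreover clause, suppose the stabiliser map $X\to\Sigma(G)$ is continuous at $z$. For any sequence $y_n\to z$ in $Y$ we have $S_{y_n}\to S_z$ in $\Sigma(G)$; since all these subgroups lie in the fixed compact group $K$, and the subspace topology inherited by $\Sigma(K)\subset\Sigma(G)$ agrees with the Fell topology intrinsic to $\Sigma(K)$, the $K$-stabiliser map $Y\to\Sigma(K)$ is continuous at $z$. The final assertion of Theorem~\ref{thm-new-general} then gives $M_U(\sigma)=1$, whence $M_U(\rho)=1$.
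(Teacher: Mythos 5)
Your proposal is correct and follows essentially the same route as the paper: localise to the $G$-invariant proper neighbourhood $U$ via \cite[Lemma~2.7]{ASS}, pass through the symmetric-imprimitivity Morita equivalence between $C_0(U)\rtimes_\lt G$ and $C_0(Y)\rtimes_\lt K$, and apply Theorem~\ref{thm-new-general} to the compact transformation group $(K,Y)$. The one step you rightly flag as delicate --- identifying the image of $\Ind^{C_0(U)\rtimes G}_{C_0(U)\rtimes S_z}(\pi_z\rtimes V)$ under the Rieffel correspondence with $\Ind^{C_0(Y)\rtimes K}_{C_0(Y)\rtimes S_z}(\pi_z\rtimes V)$ --- is handled in the paper by citing \cite[Proposition~3.13]{EH} directly, and the preservation of upper multiplicity under Morita equivalence is taken from \cite[Corollary~13]{aHRW}.
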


\begin{proof}
By the discussion above, there exist an open $G$-invariant  neighbourhood $U$ of $z$ in $X$, a compact subgroup $K$ of $G$ containing $S_z$, a $K$-invariant closed subset $Y$ of $U$ containing $z$ such that $S_y\subset K$ for all $y\in Y$, and  an $\big(C_0(U)\rtimes_{\lt} G\big)$--$\big(C_0(Y)\rtimes_{\lt} K\big)$ imprimitivity bimodule $Z$ arising from the Symmetric Imprimitivity Theorem.

By \cite[Lemma~2.7]{ASS}, the upper multiplicity of $\Ind_{C_0(X)\rtimes S_z}^{C_0(X)\rtimes G}(\pi_{z}\rtimes V)$ can be computed in the spectrum of the ideal $C_0(U)\rtimes_\lt G$ of $C_0(X)\rtimes_\lt G$.  By \cite[Proposition~3.13]{EH},  \[Z\!-\!\!\Ind\big(\Ind^{C_0(Y)\rtimes K}_{C_0(Y)\rtimes S_z}(\pi_z\rtimes V)\big) \text{\ \ and \ \ }\Ind^{C_0(U)\rtimes G}_{C_0(U)\rtimes S_z}(\pi_z\rtimes V)\]
are unitarily equivalent, and by \cite[Corollary~13]{aHRW} the upper multiplicities of  \[Z\!-\!\!\Ind\big(\Ind^{C_0(Y)\rtimes K}_{C_0(Y)\rtimes S_z}(\pi_z\rtimes V)\big)
\text{\ \ and \ \ }
\Ind^{C_0(Y)\rtimes K}_{C_0(Y)\rtimes S_z}(\pi_z\rtimes V)\]   coincide.   Theorem~\ref{thm-new-general} applied to the second-countable  transformation group $(K, Y)$ gives a closed subgroup $H$ of $K$ contained in $S_z$ and an irreducible subrepresentation $R$ of $V|_H$ such that
\[M_U\big(\Ind_{C_0(X)\rtimes S_z}^{C_0(X)\rtimes G}(\pi_{z}\rtimes V)\big)=M_U\big(\Ind^{C_0(Y)\rtimes K}_{C_0(Y)\rtimes S_z}(\pi_z\rtimes V)) =[ V|_H:R \big] \leq \dim V < \infty.\]

Finally, suppose that the stabiliser map $X\to \Sigma(G)$ is continuous at $z$. Then the stabiliser map $Y\to \Sigma(K)$ is also continuous at $z$ and hence
\[M_U\big(\Ind_{C_0(X)\rtimes S_z}^{C_0(X)\rtimes G}(\pi_{z}\rtimes V)\big)=M_U\big(\Ind^{C_0(Y)\rtimes K}_{C_0(Y)\rtimes S_z}(\pi_z\rtimes V)) =1\] by Theorem~\ref{thm-new-general}. \qedhere
\end{proof}

By using Theorem~\ref{thm-Cartan} in place of Theorem~\ref{thm-new-general}, the proof of Corollary~\ref{cor-dim1} extends to give the following result. Part~\eqref{cor-characterFell-item2} improves \cite[Lemma~5.10]{aH} which dealt with compact abelian groups $G$.

\begin{cor} Let $(G,X)$ be a second-countable Cartan transformation group.
\begin{enumerate}
\item Let $z\in X$ and $V\in {\hat S_z}$ be a character.  Then \[M_U\big(\Ind_{S_z}^G(\pi_{z}\rtimes V)\big)=1.\]
\item\label{cor-characterFell-item2} Suppose that $S_z$ is abelian for all $z\in X$. Then $C_0(X)\rtimes_{\rm lt}G$ is a Fell algebra.
\end{enumerate}
\end{cor}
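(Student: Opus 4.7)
The plan is to mimic precisely the proof of Corollary~\ref{cor-dim1}, substituting Theorem~\ref{thm-Cartan} for Theorem~\ref{thm-new-general} at the single place where compactness of $G$ was used. The hypotheses of Theorem~\ref{thm-Cartan} match those here, so no additional preparation is needed, and the obstacle that made the compact case go through, namely the existence of the subgroup $H\subset S_z$ with
\[M_U\big(\Ind_{S_z}^G(\pi_{z}\rtimes V)\big)=\big[V|_H:R\big]\]
for some $R\in \hat H$, is delivered in exactly the same form by Theorem~\ref{thm-Cartan}.

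For part~(1), I would apply Theorem~\ref{thm-Cartan} directly to the character $V\in \hat S_z$ to obtain a closed subgroup $H\subset S_z$ and an irreducible subrepresentation $R$ of $V|_H$ with $M_U\big(\Ind_{S_z}^G(\pi_{z}\rtimes V)\big)=[V|_H:R]$. Since $\dim V=1$, the restriction $V|_H$ is one-dimensional, forcing $R=V|_H$ and $[V|_H:R]=1$.

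For part~(2), I would first note that Cartan transformation groups have closed orbits (as recalled at the start of \S\ref{sec-Cartan} via \cite[Proposition~1.1.4]{palais}), so Lemma~\ref{lem-induction}(\ref{lem-induction-item1}) applies and every irreducible representation of $C_0(X)\rtimes_\lt G$ is unitarily equivalent to some $\Ind_{S_z}^G(\pi_{z}\rtimes V)$ with $z\in X$ and $V\in \hat S_z$. Since each stability subgroup $S_z$ is compact (another consequence of the Cartan hypothesis) and abelian by assumption, every such $V$ is a character. Part~(1) then shows that every irreducible representation of $C_0(X)\rtimes_\lt G$ has upper multiplicity $1$, and Fell's condition follows everywhere, so \cite[Theorem~4.6]{A} yields that $C_0(X)\rtimes_\lt G$ is a Fell algebra.

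Since both steps are formal consequences of results already proved, I do not anticipate any real obstacle; the only point requiring a brief remark is the verification that Lemma~\ref{lem-induction} remains available in the Cartan setting, which is immediate from the closed-orbit property noted above (and in fact the lemma was stated for general transformation groups with closed orbits).
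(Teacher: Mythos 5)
Your proposal is correct and follows exactly the route the paper intends: the paper's own justification is the one-line remark that ``by using Theorem~\ref{thm-Cartan} in place of Theorem~\ref{thm-new-general}, the proof of Corollary~\ref{cor-dim1} extends,'' and your argument is precisely that substitution, including the sensible extra check that Lemma~\ref{lem-induction} still applies because Cartan actions have closed orbits. No issues.
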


The next result is an immediate corollary of Theorem~\ref{thm-Cartan} and extends part of an earlier result for free actions \cite[Theorem]{aH-Fell}. Note that Example~\ref{ex-orthogonal} shows that the converse is false.

\begin{cor} Let $(G,X)$ be a second-countable Cartan transformation group.
If the stabiliser map $X\to \Sigma(G)$, $x\to S_x$, is continuous then $C_0(X)\rtimes_{\rm lt}G$ is a Fell algebra.
\end{cor}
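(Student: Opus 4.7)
The plan is to combine Theorem~\ref{thm-Cartan} with the characterisation of Fell algebras via upper multiplicity. Since all orbits in a Cartan transformation group are closed \cite[Proposition~1.1.4]{palais}, Lemma~\ref{lem-induction} applies: every irreducible representation of $C_0(X)\rtimes_{\lt}G$ is unitarily equivalent to one of the form $\Ind_{S_z}^G(\pi_{z}\rtimes V)$ for some $z\in X$ and $V\in\hat{S}_z$.

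Next, I would invoke the ``moreover'' clause of Theorem~\ref{thm-Cartan}: since the stabiliser map $x\mapsto S_x$ is continuous at every point of $X$ by hypothesis, we get
\[
M_U\big(\Ind_{S_z}^G(\pi_{z}\rtimes V)\big)=1
\]
for every $z\in X$ and every $V\in\hat{S}_z$. Combining with the first step, every irreducible representation of $C_0(X)\rtimes_{\lt}G$ has upper multiplicity $1$.

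Finally, I would cite \cite[Theorem~4.6]{A} (as in the proof of Corollary~\ref{cor-dim1}(2) and the preceding corollary), which says that a $C^*$-algebra is a Fell algebra if and only if every irreducible representation satisfies Fell's condition, equivalently, if and only if every irreducible representation has upper multiplicity $1$. This immediately yields that $C_0(X)\rtimes_{\lt}G$ is a Fell algebra.

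There is no real obstacle here: the work has all been done in Theorem~\ref{thm-Cartan}, and this corollary is a routine packaging of that theorem with Lemma~\ref{lem-induction} and the upper-multiplicity characterisation of Fell algebras. The proof will essentially be a three-line argument paralleling Corollary~\ref{cor-dim1}(2), with the role of ``$V$ is a character'' replaced by ``the stabiliser map is continuous at $z$'', which is what forces the upper multiplicity down to $1$ via Theorem~\ref{thm-Cartan}.
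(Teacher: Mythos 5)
Your proposal is correct and is exactly the argument the paper intends: the authors state this as an immediate consequence of the ``moreover'' clause of Theorem~\ref{thm-Cartan}, combined (implicitly) with Lemma~\ref{lem-induction} and the upper-multiplicity characterisation of Fell algebras from \cite[Theorem~4.6]{A}. Nothing is missing.
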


Finally, it seems worthwhile to record explicitly the generalisation of \cite[Corollary 2]{E} from actions of compact groups to proper actions, which may well be known to some experts. Indeed, the equivalence of conditions \eqref{item1-genS} and \eqref{item2-genS}  is shown in \cite[Theorem 5.5.1]{Neu}.

\begin{thm}
Let $(G,X)$ be a proper transformation group. The following conditions are equivalent.
\begin{enumerate}
\item\label{item1-genS} The stabiliser map $X\to \Sigma(G)$, $x\to S_x$, is continuous.
\item\label{item2-genS}  The spectrum of $C_0(X)\rtimes_{\rm lt}G$ is Hausdorff.
\item\label{item3-genS} $C_0(X)\rtimes_{\rm lt}G$ has continuous trace.
\end{enumerate}
\end{thm}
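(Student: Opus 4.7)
The plan is to close a short cycle of implications, using the corollary just above together with Neumann's result cited in the statement and Dixmier's characterisation of continuous trace recalled in the introduction. Since the implication \eqref{item3-genS}$\Rightarrow$\eqref{item2-genS} is immediate (every continuous-trace $C^*$-algebra has Hausdorff spectrum by \cite[Proposition 4.5.4]{Dix2}), and since \eqref{item1-genS}$\Leftrightarrow$\eqref{item2-genS} is the content of \cite[Theorem 5.5.1]{Neu}, the only thing left to establish is \eqref{item1-genS}$\Rightarrow$\eqref{item3-genS}.

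For this implication, first note that every proper transformation group is automatically Cartan, since properness implies Hausdorffness of the orbit space (see \cite[Theorem~1.2.9]{palais}) and properness of the action on any $G$-invariant open set. Thus all the results proved in this section for Cartan transformation groups apply. Now assume \eqref{item1-genS}. The corollary immediately preceding the theorem says that if $(G,X)$ is a second-countable Cartan transformation group with continuous stabiliser map, then $C_0(X)\rtimes_\lt G$ is a Fell algebra; in other words, every irreducible representation satisfies Fell's condition. On the other hand, since \eqref{item1-genS}$\Rightarrow$\eqref{item2-genS}, the spectrum $(C_0(X)\rtimes_\lt G)^\wedge$ is Hausdorff. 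By Dixmier's characterisation (\cite[Propositions~4.5.3 and 4.5.4]{Dix2}) recalled in the introduction, a $C^*$-algebra has continuous trace if and only if its spectrum is Hausdorff and each irreducible representation satisfies Fell's condition. Therefore $C_0(X)\rtimes_\lt G$ has continuous trace, giving \eqref{item3-genS}.

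There is no genuine obstacle to overcome: all the machinery has been set up in the preceding results. The only mild subtlety is the need to verify that a proper transformation group is Cartan (so that the previous corollary applies) and to invoke the Dixmier characterisation rather than deriving Fell's condition directly from the crossed product structure. In particular, unlike the compact-group argument of \cite[Corollary~2]{E}, the result here does not require a dedicated trace computation: Theorem~\ref{thm-Cartan} (through its corollary) already captures the necessary information about upper multiplicities, and the argument is purely structural.
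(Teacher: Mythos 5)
Your cycle of implications has the right shape, and for \emph{second-countable} $(G,X)$ your argument for \eqref{item1-genS}$\Rightarrow$\eqref{item3-genS} is essentially the paper's: continuity of the stabiliser map gives upper multiplicity $1$ everywhere via Theorem~\ref{thm-Cartan} and its corollary, and combined with the Hausdorff spectrum supplied by \eqref{item2-genS} and Dixmier's characterisation this yields continuous trace. The implications \eqref{item3-genS}$\Rightarrow$\eqref{item2-genS} and \eqref{item1-genS}$\Leftrightarrow$\eqref{item2-genS} are dispatched exactly as in the paper (the latter being \cite[Theorem~5.5.1]{Neu}).

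The gap is that the theorem is stated for an arbitrary proper transformation group, with no second-countability hypothesis, whereas the corollary you invoke --- and Theorem~\ref{thm-Cartan} behind it, which rests on the separable multiplicity machinery of \cite{A, AK, ASS} and on \cite{Neu} --- is proved only for second-countable $(G,X)$. Your step ``the corollary immediately preceding the theorem says\dots'' therefore does not apply in general, and your closing remark that no dedicated trace computation is needed is exactly where the general case bites. The paper closes it by a local argument that bypasses the multiplicity theory: by \cite[Theorem~3.3]{abel} each point $z$ has a $G$-invariant open neighbourhood $U$, a compact subgroup $K\supset S_z$ and a closed $K$-invariant subset $Y\subset U$ such that $C_0(U)\rtimes_\lt G$ is Morita equivalent to $C_0(Y)\rtimes_\lt K$; continuity of the stabiliser map passes to $(K,Y)$, so $C_0(Y)\rtimes_\lt K$ has continuous trace by Echterhoff's \cite[Corollary~2]{E}; continuous trace is preserved by Morita equivalence \cite[Proposition~7]{aHRW}, so the ideal $C_0(U)\rtimes_\lt G$ has continuous trace, whence the restriction of $\Ind_{S_z}^G(\pi_z\rtimes V)$ to that ideal, and therefore the representation itself, satisfies Fell's condition. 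You should either supply this local reduction or state explicitly that your proof covers only the second-countable case; as written it does not prove the theorem as stated.
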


\begin{proof}
Assuming \eqref{item1-genS} (and hence \eqref{item2-genS}), let $z\in X$ and $V\in {\hat S_z}$. To see \eqref{item3-genS}, it suffices to show that $\Ind_{S_z}^{G}(\pi_{z}\rtimes V)$ satisfies Fell's condition. In the second countable case, this follows from Theorem~\ref{thm-Cartan}. In general, using the notation of this section, it follows from \cite[Corollary 2]{E} that $C_0(Y)\rtimes_{\rm lt}K$ has continuous trace. Then $C_0(U)\rtimes_{\rm lt} G$ also has continuous trace since this property is preserved by Morita equivalence \cite[Proposition 7]{aHRW}. In particular, the restriction of $\Ind_{S_z}^{G}(\pi_{z}\rtimes V)$ to $C_0(U)\rtimes_{\rm lt} G$ satisfies Fell's condition and hence so does $\Ind_{S_z}^{G}(\pi_{z}\rtimes V)$ itself. That \eqref{item3-genS} implies \eqref{item2-genS} follows since all  $C^*$-algebras with continuous trace have Hausdorff spectrum.
\end{proof}



\begin{thebibliography}{99}
%

\bibitem{abel} H. Abels, \emph{A universal proper $G$-space}, Math. Z. \textbf{159} (1978), 143--158.


\bibitem{A}  R.J. Archbold,  \emph{Upper and lower multiplicity for irreducible representations of $C^*$-algebras}, Proc. London Math. Soc. \textbf{69} (1994), 121--143.

\bibitem{Aprimals} R.J. Archbold, \emph{Topologies for primal ideals}, J. London Math. Soc. (2) 36 (1987) 524--542.


\bibitem{AK} R.J. Archbold and E. Kaniuth, \emph{Upper and lower multiplicity for irreducible representations of SIN-groups}, Illinois J. Math. \textbf{43} (1999), 692--706.

\bibitem{AK-stablerank}  R.J. Archbold and E. Kaniuth, \emph{Stable rank and real rank of compact transformation group $C^*$-algebras}, Studia Math. \textbf{175} (2006), 103--120.


\bibitem{ASo} R.J. Archbold and D.W.B. Somerset, \emph{Transition probabilities and trace functions for {$C^*$}-algebras}, Math. Scand. \textbf{73} (1993), 81--111.

\bibitem{AS} R.J. Archbold and J.S. Spielberg, \emph{Upper and lower multiplicity for irreducible representations of $C^*$-algebras. \textrm{II}}, J. Operator Theory \textbf{36} (1996), 201--231.
%
\bibitem{ASS} R.J. Archbold, D.W.B. Somerset and J.S. Spielberg, \emph{Upper multiplicity and bounded trace ideals in $C^*$-algebras}, J. Funct. Anal. \textbf{146} (1997), 430--463.

\bibitem{Bag} L. Baggett, \emph{A description of the topology on the dual spaces of certain locally compact groups}, Trans. Amer. Math. Soc. \textbf{132} (1968), 175--215.



\bibitem{duflo} P.~Bernat, N.~Conze, M.~Duflo, M.~L{\'e}vy-Nahas, M.~Ra{\"\i}s, P.~Renouard and M.~Vergne, Repr\'esentations des Groupes de Lie R\'esolubles, ch.~V, pp.~93--119, Dunod, 1972, Monographies de la Soci\'et\'e Math\'ematique de France, No. 4.

\bibitem{Bredon} G.E. Bredon, Introduction to Compact Transformation Groups, Academic Press, 1972.


%
\bibitem{DE} A. Deitmar and S. Echterhoff, Principles of Harmonic Analysis, Springer,  2009.


\bibitem{Dix2} J. Dixmier, $C^*$-Algebras, North Holland, 1977.


\bibitem{E} S. Echterhoff, \emph{On transformation group $C^*$-algebras with continuous trace}, Trans. Amer. Math. Soc. \textbf{343} (1994), 117--133.

\bibitem{EH} S. Echterhoff and H. Emerson, \emph{Structure and K-theory of crossed products by proper actions}, Expo. Math. \textbf{29} (2011) 300--344.





\bibitem{Fell-topology}
J.M.G. Fell, \emph{A Hausdorff topology for the closed subsets of a locally
  compact non-Hausdorff space}, Proc. Amer. Math. Soc. \textbf{13} (1962),
  472--476.

\bibitem{FellII} J.M.G. Fell,  \emph{Weak containment and induced representations of groups. II}, Trans. Amer. Math. Soc. \textbf{110} (1964), 424--447.



\bibitem{GL}
E.C. Gootman and A.J. Lazar, \emph{Applications of non-commutative duality to
crossed product {$C^*$}-algebras determined by an action or coaction}, Proc.
London Math. Soc. \textbf{59} (1989), 593--624.


\bibitem{GL2}
E.C. Gootman and A.J. Lazar, \emph{Compact group actions on {$C^*$}-algebras: an application of non-commutative duality}, J. Funct. Anal. \textbf{91} (1990), 237--245.

\bibitem{green1} P. Green, \emph{$C^*$-algebras of transformation groups with smooth orbit space}, Pacific J. Math. \textbf{72} (1977), 71--97.


%
\bibitem{green}
P. Green, \emph{The local structure of twisted covariance algebras}, Acta Math.
\textbf{140} (1978), 191--250.


\bibitem{Hewitt-Ross} E. Hewitt and K.A. Ross, Abstract Harmonic Analysis I, Springer, 1963.

\bibitem{aH-Fell} A.~an Huef, \emph{The transformation groups whose $C^*$-algebras are Fell algebras}, Bull. London Math. Soc. \textbf{33} (2001), 73--76.

\bibitem{aH} A.~an Huef, \emph{Integrable actions and the transformation groups whose $C^*$-algebras have bounded trace}, Indiana Univ. Math. J. \textbf{51} (2002), 1197--1233.

\bibitem{aHKS} A.~an Huef, A. Kumjian and A. Sims, \emph{A Dixmier-Douady theorem for Fell algebras}, J. Funct. Anal. \textbf{260} (2011), 1543-1581.

%
\bibitem{aHRW} A. an Huef, I. Raeburn, Dana P. Williams, \emph{Properties preserved under Morita equivalences of $C^*$-algebras}, Proc. Amer. Math. Soc. \textbf{135} (2007), 1495--1503.

\bibitem{James} G.D. James, The representation theory of the symmetric groups, Lecture Notes in Mathematics, 682, Springer, Berlin, 1978.

\bibitem{KST} E. Kaniuth, G. Schlichting and K. F. Taylor, \emph{Minimal primal and Glimm ideal spaces of group $C^*$-algebras}, J. Funct. Anal. \textbf{130} (1995), 43--76.

\bibitem{KT} E. Kaniuth and K.T. Taylor, Induced Representations of Locally Compact Groups, Cambridge University Press, 2013.

    \bibitem{Knapp} A.W. Knapp, \emph{Branching theorems for compact symmetric spaces}, Represent. Theory, \textbf{5} (2001), 404--436.



\bibitem{MR} D. Marelli and I. Raeburn, \emph{Proper actions which are not saturated},  Proc. Amer. Math. Soc. \textbf{137} (2009), 2273Ð-2283.

\bibitem{Mont} D. Montgomery, \emph{Orbits of highest dimension}, in: Seminar on Transformation Groups, ed. A. Borel, Ann. of Math. Stud. 46, Chapter IX, Princeton Univ. Press, Princeton, NJ, 1960, 117--131.

\bibitem{Mur} F.D. Murnaghan, The Theory of Group Representations, Johns Hopkins Press, Baltimore, 1938.

\bibitem{Neu} K. Neumann, \emph{A description of the Jacobson topology on the spectrum of transformation group $C^*$-algebras by proper actions}, PhD thesis, University of M\"unster, 2011.

\bibitem{palais} R.S. Palais, \emph{On the existence of slices for actions of non-compact Lie groups}, Ann. of Math. \textbf{73} (1961), 295--323.

\bibitem{Ped} G.K. Pedersen, $C^*$-Algebras and their
    Automorphism Groups, Academic Press, London, 1979.

\bibitem{rae-sit} I. Raeburn, \emph{Induced $C^*$-algebras and a symmetric imprimitivity theorem},  Math. Ann. 280 (1988),  369--387.

%
\bibitem{tfb}
I. Raeburn and D.P. Williams, Morita Equivalence and
Continuous-Trace $C^*$-Algebras, Math. Surveys and Monographs, vol. {\bf
60}, Amer. Math. Soc.,  1998.

\bibitem{Rieffel}
M.A. Rieffel, \emph{Proper actions of groups on {$C^*$}-algebras},
Mappings of Operator Algebras (H. Araki and R.V. Kadison, eds.), Progress in Math.,
vol.~84, Birkhauser, 1988, pages~141--182.

\bibitem{rieffel2} M.A. Rieffel, \emph{Integrable and proper actions on $C^*$-algebras, and square-integrable representations of groups}, Expo. Math. \textbf{22} (2004), 1--53.


\bibitem{R} J. Rosenberg, \emph{Appendix to O. Bratteli's paper on ``Crossed products of UHF algebras''},  Duke Math. J.  \textbf{46} (1979), 25--26.



\bibitem{W2}
 D.P. Williams, \emph{The topology on the primitive ideal space of
 transformation group $C^*$-algebras and CCR transformation group
 $C^*$-algebras}, Trans. Amer. Math. Soc. \textbf{266} (1981), 335--359.


\bibitem{tfb^2}  D.P. Williams, Crossed Products of $C^*$-Algebras,
  Math. Surveys and Monographs, vol.~134, Amer. Math. Soc.,
   2007.
%

\end{thebibliography}
\end{document}